\author{Hongliang Lai, Lili Shen and Walter Tholen}
\thanks{Partial financial assistance by National Natural Science Foundation of China (11101297),
International Visiting Program for Excellent Young Scholars of Sichuan University
and the Natural Sciences and Engineering Research Council (NSERC) of Canada is gratefully acknowledged.
This work was completed while the first author held a Visiting Professorship at York University.}
\address{School of Mathematics, Sichuan University\\
 Chengdu 610064, China\\[5pt]
 Department of Mathematics and Statistics, York University\\
 Toronto, Ontario M3J 1P3, Canada\\
}
\title{Lax distributive laws for topology, II}
\keywords{quantaloid, monad, presheaf monad, copresheaf monad, double presheaf monad, double copresheaf monad, lax distributive law, lax $\lambda$-algebra, lax monad extension, $\mathcal{Q}$-closure space, $\mathcal{Q}$-interior space}
\newtheorem{thm}{Theorem}
\newtheorem{lem}{Lemma}
\newtheorem{prop}{Proposition}
\newtheorem{cor}{Corollary}
\DeclareMathOperator{\ob}{ob}
\def\oto{{\bfig\morphism<180,0>[\mkern-4mu`\mkern-4mu;]\place(86,0)[\circ]\efig}}
\def\rto{{\bfig\morphism<180,0>[\mkern-4mu`\mkern-4mu;]\place(78,0)[\mapstochar]\efig}}
\newcommand{\da}{\downarrow}
\newcommand{\ua}{\uparrow}
\newcommand{\lda}{\swarrow}
\newcommand{\rda}{\searrow}
\newcommand{\Lra}{\Longrightarrow}
\newcommand{\bv}{\bigvee}
\newcommand{\bw}{\bigwedge}
\newcommand{\dv}{\dashv}
\newcommand{\od}{\odot}
\newcommand{\opl}{\oplus}
\newcommand{\ola}{\overleftarrow}
\newcommand{\ora}{\overrightarrow}
\newcommand{\ie}{\text{\rm !`}}
\renewcommand{\phi}{\varphi}
\newcommand{\al}{\alpha}
\newcommand{\be}{\beta}
\newcommand{\ep}{\varepsilon}
\newcommand{\Lam}{\Lambda}
\newcommand{\lam}{\lambda}
\newcommand{\si}{\sigma}
\newcommand{\CC}{\mathcal{C}}
\newcommand{\CF}{\mathcal{F}}
\newcommand{\CQ}{\mathcal{Q}}
\newcommand{\calR}{\mathcal{R}}
\newcommand{\sD}{{\sf D}}
\newcommand{\sP}{{\sf P}}
\newcommand{\sV}{{\sf V}}
\newcommand{\sk}{{\sf k}}
\newcommand{\sfs}{{\sf s}}
\newcommand{\sy}{{\sf y}}
\newcommand{\bbI}{\mathbb{I}}
\newcommand{\bbP}{\mathbb{P}}
\newcommand{\bbT}{\mathbb{T}}
\newcommand{\Fs}{\mathfrak{s}}
\newcommand{\Fy}{\mathfrak{y}}
\newcommand{\Alg}{{\bf Alg}}
\newcommand{\Cat}{{\bf Cat}}
\newcommand{\Cls}{{\bf Cls}}
\newcommand{\Dist}{{\bf Dist}}
\newcommand{\Inf}{{\bf Inf}}
\newcommand{\Int}{{\bf Int}}
\newcommand{\Mon}{{\bf Mon}}
\newcommand{\Rel}{{\bf Rel}}
\renewcommand{\Set}{{\bf Set}}
\newcommand{\Sup}{{\bf Sup}}
\newcommand{\QCat}{\CQ\text{-}\Cat}
\newcommand{\QCls}{\CQ\text{-}\Cls}
\newcommand{\QDist}{\CQ\text{-}\Dist}
\newcommand{\QInf}{\CQ\text{-}\Inf}
\newcommand{\QInt}{\CQ\text{-}\Int}
\newcommand{\QRel}{\CQ\text{-}\Rel}
\newcommand{\QSup}{\CQ\text{-}\Sup}
\newcommand{\VCat}{\sV\text{-}\Cat}
\newcommand{\VRel}{\sV\text{-}\Rel}
\newcommand{\dphi}{\phi^{\da}}
\newcommand{\uphi}{\phi_{\ua}}
\newcommand{\dpsi}{\psi^{\da}}
\newcommand{\sPd}{\sP^{\dag}}
\newcommand{\syd}{\sy^{\dag}}
\newcommand{\co}{{\rm co}}
\newcommand{\op}{{\rm op}}
\newcommand{\PX}{\sP X}
\newcommand{\PY}{\sP Y}
\newcommand{\PdX}{\sPd X}
\newcommand{\PdY}{\sPd Y}
\newcommand{\DV}{\sD\sV}
\newcommand{\hT}{\hat{T}}
\newcommand{\hP}{\hat{\sP}}
\newcommand{\TQ}{(\bbT,\CQ)}
\newcommand{\TTQ}{(\bbT,\hT,\CQ)}
\newcommand{\TV}{(\bbT,\sV)}
\newcommand{\TQCat}{\TQ\text{-}\Cat}
\newcommand{\TTQCat}{\TTQ\text{-}\Cat}
\newcommand{\TVCat}{\TV\text{-}\Cat}
\newcommand{\ophi}{\ora{\phi}}
\newcommand{\opsi}{\ora{\psi}}
\newcommand{\olal}{\ola{\al}}
\newcommand{\olbe}{\ola{\be}}
\newcommand{\olphi}{\ola{\phi}}
\newcommand{\olpsi}{\ola{\psi}}
\newcommand{\rc}{\rm c}
\newcommand{\lamd}{\lam^{\dag}}
\newcommand{\Lamd}{\Lam^{\dag}}
\newcommand{\ssd}{\sfs^{\dag}}
\newcommand{\Fsd}{\Fs^{\dag}}
\newcommand{\Fyd}{\Fy^{\dag}}
\newcommand{\PP}{\sP\sP}
\newcommand{\PdP}{\sPd\sP}
\newcommand{\PPd}{\sP\sPd}
\newcommand{\PPX}{\sP\PX}
\newcommand{\PdPX}{\sPd\PX}
\newcommand{\PPdX}{\sP\PdX}
\newcommand{\PdPdX}{\sPd\PdX}
\newcommand{\PdPY}{\sPd\PY}
\newcommand{\PPdY}{\sP\PdY}
\newcommand{\lamQAlg}{(\lam,\CQ)\text{-}\Alg}
\newcommand{\lamdQAlg}{(\lamd,\CQ)\text{-}\Alg}
\newcommand{\LamQAlg}{(\Lam,\CQ)\text{-}\Alg}
\newcommand{\LamdQAlg}{(\Lamd,\CQ)\text{-}\Alg}
\renewcommand{\leq}{\leqslant}
\renewcommand{\geq}{\geqslant}
\newcommand{\dal}{\al^{\da}}
\newcommand{\dbe}{\be^{\da}}
\numberwithin{equation}{section}
\begin{document}

\maketitle
\begin{abstract}
For a small quantaloid $\mathcal{Q}$ we consider four fundamental 2-monads $\mathbb{T}$ on $\mathcal{Q}\text{-}{\bf Cat}$, given by the presheaf 2-monad $\mathbb{P}$ and the copresheaf 2-monad $\mathbb{P}^{\dag}$, as well as by their two composite 2-monads, and establish that they all laxly distribute over $\mathbb{P}$. These four 2-monads therefore admit lax extensions to the category $\mathcal{Q}\text{-}{\bf Dist}$ of $\mathcal{Q}$-categories and their distributors. We characterize the corresponding $(\mathbb{T},\mathcal{Q})$-categories in each of the four cases, leading us to both known and novel categorical structures.
\end{abstract}

\section{Introduction}

 \emph{Monoidal Topology} \cite{Hofmann2014} provides a common framework for the study of fundamental metric and topological structures. Its ingredients are a quantale $\sV$, a $\Set$-monad $\bbT$ and, most importantly, a lax extension of $\bbT$ to the 2-category $\VRel$ of sets and $\sV$-valued relations. Such lax extensions are equivalently described by lax distributive laws of $\bbT$ over the \emph{discrete} $\sV$-presheaf monad
$\bbP_{\sV}$, the Kleisli category of which is exactly $\VRel$. Once equipped with a lax extension or lax distributive law, the monad $\bbT$ may then be naturally extended to become a 2-monad on the 2-category $\VCat$. This lax monad extension from $\Set$ to $\VCat$ facilitates the study of greatly enriched structures. For example, for $\sV$ the two-element chain and $\bbT$ the ultrafilter monad, while the Eilenberg-Moore category over $\Set$ is ${\bf CompHaus}$, over $\VCat$ one obtains \emph{ordered }compact Hausdorff spaces, and when $\sV$ is Lawvere's \cite{Lawvere1973} extended half-line $[0,\infty]$, \emph{metric} compact Hausdorff spaces; see \cite{Nachbin1948,Tholen2009,Hofmann2014}. Moreover, the functorial interaction between the Eilenberg-Moore category $(\VCat)^{\bbT}$ and the category $\TVCat$ of $\TV$-categories is a pivotal step for a serious study of \emph{representability}, a powerful property which, in the basic example of the two-element chain and the ultrafilter monad, entails core-compactness, or exponentiability, of topological spaces; see \cite{Clementino2009} and \cite[Section III.5]{Hofmann2014}.

While this mechanism for generating a 2-monad on $\VCat$ from a $\Set$-monad provides an indispensable tool in monoidal topology, the question arises whether it is possible to make a given 2-monad $\bbT$ on $\VCat$ the starting point of a satisfactory theory, preferably even in the more general context of a small quantaloid $\CQ$, (\emph{i.e.}, a $\Sup$-enriched category), rather than just a quantale $\sV$ (\emph{i.e.}, a $\Sup$-enriched monoid), a context that has been propagated in this paper's predecessor \cite{Tholen2016}. Such theory should, as a first step, entail the study of lax extensions of $\bbT$ to the 2-category $\QDist$ of $\CQ$-categories and their \emph{distributors} (also \emph{(bi)modules}, or \emph{profunctors}), rather than just to $\QRel$, or, equivalently, the study of lax distributive laws of $\bbT$ over the \emph{non-discrete} presheaf monad $\bbP_{\CQ}$, rather than over its discrete counterpart. The fact that the non-discrete presheaf monad is, other than its discrete version, lax idempotent (\emph{i.e.}, of Kock-Z\"oberlein type \cite{Zoeberlein1976,Kock1995}), serves as a first indicator that this approach should in fact lead to a categorically more satisfactory theory.

This paper makes the case for an affirmative answer to the question raised, even in the extended context of a given small quantaloid $\CQ$, rather than that of a quantale. It is centred around a fairly simple, but fundamental extension procedure for endo-2-functors of $\CQ$-{\bf Cat} to become lax endofunctors of $\QDist$, which has been used in the quantalic context in \cite{Akhvlediani2010} and extended from quantales to quantaloids in \cite{Lai2016a}. More importantly, the paper \cite{Lai2016a} emphasized the fact that there is precisely one \emph{flat} (or \emph{normal}) lax extension when the given endo-2-functor of $\QCat$ preserves the full fidelity of $\CQ$-functors. We recall this technique in Section \ref{Flat_Dist_Laws} and then apply it to
four naturally arising 2-monads $\bbT$ on $\QCat$ which do not come about as monads ``lifted'' from {\bf Set} via the mechanism described above, but which should nevertheless be of considerable general interest. They all distribute laxly, but flatly, over $\bbP=\bbP_{\CQ}$ and, hence, are laxly, but flatly, extendable to $\QDist$, and we give a detailed description of the respective lax algebras, or $\TQ$-categories, arising. These monads are
\begin{enumerate}[label=$\bullet$]
\item the presheaf 2-monad $\bbP$ itself (Section \ref{Law_P});
\item the copresheaf 2-monad $\bbP^{\dag}$ (Section \ref{Law_P});
\item the double presheaf 2-monad $\bbP\bbP^{\dag}$ (Section \ref{Law_PPd});
\item the double copresheaf 2-monad $\bbP^{\dag}\bbP$ (Section \ref{Law_PdP}).
\end{enumerate}
In each of the four cases, the establishment of the needed lax distributive law over $\bbP$ and the characterization of the corresponding lax algebras, or, equivalently, $\TQ$-categories, take some ``technical'' effort. However, the lax algebras pertaining to both, $\bbP$ and $\bbP^{\dag}\bbP$, are fairly quickly identifiable as \emph{$\CQ$-closure spaces}, as considered in \cite{Shen2016b,Shen2013a}. More challenging is the identification of the lax algebras pertaining to $\bbP\bbP^{\dag}$, which we describe as \emph{$\CQ$-interior spaces}, a structure considered here for the first time. Also the lax algebras pertaining to $\bbP^{\dag}$ are of a novel flavour; they are monoid objects in $\QDist$. Given that their discrete cousins, \emph{i.e.}, the monoid objects in $\QRel$, are $\CQ$-categories, they surely deserve further study.

We have given sufficiently many details to make the proofs easily verifiable for the reader, also since all needed basic tools are comprehensively listed in Section \ref{QCat}.
The introduction of lax distributive laws of a 2-monad over the (non-discrete) presheaf monad and of their lax algebras (as given in Section \ref{Lax_Dist_Laws}), as well as the proof of the fact that these correspond bijectively to lax extensions of $\bbT$ to $\QDist$, with lax algebras corresponding to $\TQ$-categories (as given in Section \ref{Dist_Law_vs_Extension}), are straightforward extensions of their ``discrete'' treatment in \cite{Tholen2016}.
Nevertheless, prior reading of \cite{Tholen2016} is not required for the purpose of understanding these parts of the  paper.

\emph{Acknowledgement.} We thank the anonymous referee for his/her suggestions on the first version of this paper,  which was missing the techniques of Section 4 that in particular simplify the treatment of the four monads of our interest. These suggestions included a concrete indication on how the results of this paper may be established in the context of a locally cocomplete bicategory, rather than that of a quantaloid, on which we plan to follow up in a separate paper.

\section{Quantaloid-enriched categories and their distributors} \label{QCat}

A \emph{quantaloid} \cite{Rosenthal1996} is a category enriched in the monoidal-closed category $\Sup$ \cite{Joyal1984} of complete lattices and $\sup$-preserving maps. Explicitly, a quantaloid $\CQ$ is a 2-category with its 2-cells given by an order ``$\preceq$'', such that each hom-set $\CQ(r,s)$ is a complete lattice and the composition of morphisms from either side preserves arbitrary suprema. Hence, $\CQ$ has ``internal homs'', denoted by $\lda$ and $\rda\,$, as the right adjoints of the composition functors:
$$-\circ u\dv -\lda u:\CQ(r,t)\to\CQ(s,t)\quad\text{and}\quad v\circ -\dv v\rda -:\CQ(r,t)\to\CQ(r,s);$$
explicitly,
$$u\preceq v\rda w\iff v\circ u\preceq w\iff v\preceq w\lda u$$
for all morphisms $u:r\to s$, $v:s\to t$, $w:r\to t$ in $\CQ$.

Throughout this paper, we let $\CQ$ be a \emph{small} quantaloid. From $\CQ$ one forms a new (large) quantaloid $\QRel$ of \emph{$\CQ$-relations} with the following data: its objects are those of $\Set/\CQ_0$ (with $\CQ_0:=\ob\CQ$), \emph{i.e.}, sets $X$ equipped with an \emph{array} (or \emph{type}) map $|\text{-}|:X\to\CQ_0$, and a morphism $\phi:X\rto Y$ in $\QRel$ is a map that assigns to every pair $x\in X$, $y\in Y$ a morphism $\phi(x,y):|x|\to|y|$ in $\CQ$; its composite with $\psi:Y\rto Z$ is defined by
$$(\psi\circ\phi)(x,z)=\bv_{y\in Y}\psi(y,z)\circ\phi(x,y),$$
and $1_X^{\circ}:X\rto X$ with
$$1_X^{\circ}(x,y)=\begin{cases}
1_{|x|} & \text{if}\ x=y,\\
\bot & \text{else}
\end{cases}$$
serves as the identity morphism on $X$. As $\CQ$-relations are equipped with the pointwise order inherited from $\CQ$, internal homs in $\QRel$ are computed pointwise as
$$(\theta\lda\phi)(y,z)=\bw_{x\in X}\theta(x,z)\lda\phi(x,y)\quad\text{and}\quad(\psi\rda\theta)(x,y)=\bw_{z\in Z}\psi(y,z)\rda\theta(x,z)$$
for all $\phi:X\rto Y$, $\psi:Y\rto Z$, $\theta:X\rto Z$.

A (small) \emph{$\CQ$-category} is an (internal) monad in the 2-category $\QRel$; or equivalently, a monoid in the monoidal-closed category $(\QRel(X,X),\circ)$, for some $X$ over $\CQ_0$. Explicitly, a $\CQ$-category consists of an object $X$ in $\Set/\CQ_0$ and a $\CQ$-relation $a:X\rto X$ (its ``hom''), such that $1_X^{\circ}\preceq a$ and $a\circ a\preceq a$. For every $\CQ$-category $(X,a)$, the underlying (pre)order on $X$ is given by
$$x\leq x'\iff|x|=|x'|\ \text{and}\ 1_{|x|}\preceq a(x,x'),$$
and we write $x\cong x'$ if $x\leq x'$ and $x'\leq x$.

A map $f:(X,a)\to(Y,b)$ between $\CQ$-categories is a \emph{$\CQ$-functor} (resp. \emph{fully faithful $\CQ$-functor}) if it lives in $\Set/\CQ_0$ and satisfies $a(x,x')\preceq b(fx,fx')$ (resp. $a(x,x')=b(fx,fx')$) for all $x,x'\in X$. With the pointwise order of $\CQ$-functors inherited from $Y$, \emph{i.e.},
$$f\leq g:(X,a)\to(Y,b)\iff\forall x\in X:\ fx\leq gx\iff\forall x\in X:\ 1_{|x|}\preceq b(fx,gx),$$
$\CQ$-categories and $\CQ$-functors are organized into a 2-category $\QCat$.

The one-object quantaloids are the (unital) \emph{quantales} (see \cite{Rosenthal1990}); equivalently, a quantale is a complete lattice $\sV$ with a monoid structure whose binary operation $\otimes$ preserves suprema in each variable. We generally denote the $\otimes$-neutral element by $\sk$; so, $\sk=1_{\star}$ if we denote by $\star$ the only object of the monoid $\sV$, considered as a category.

We refer to \cite{Hofmann2014} for the standard examples of quantales $\sV$ of interest in \emph{monoidal topology}, which include the Lawvere quantale $[0,\infty]$ with its addition (\cite{Lawvere1973}) or its frame operation (\cite{Rutten1996}), giving generalized metric spaces and generalized ultrametric spaces as $\sV$-categories. For relevant examples of small quantaloids that are not quantales, we mention the fact that every quantale $\sV$ (in fact, every quantaloid) gives rise to the quantaloid $\DV$ of ``diagonals of $\sV$'' (see \cite{Stubbe2014}), which has a particularly simple description when $\sV$ is \emph{divisible}: see \cite{Hoehle2011a,Pu2012}. For $\sV=[0,\infty]$ one obtains as $\DV$-categories generalized partial (ultra-)metric spaces, as studied by various authors: \cite{Matthews1994,Bukatin2009,Hoehle2011a,Pu2012,Tao2014,Tholen2016}. We also refer to Walters' original paper \cite{Walters1982} which associates with a small site $(\CC,\CF)$ a quantaloid $\calR$
whose Cauchy complete $\calR$-categories have been identified as the internal ordered objects in the topos of sheaves over $(\CC,\CF)$ in the thesis \cite{Heymans2010}; see also \cite{Heymans2014}.

A $\CQ$-relation $\phi:X\rto Y$ becomes a \emph{$\CQ$-distributor} $\phi:(X,a)\oto(Y,b)$ if it is compatible with the $\CQ$-categorical structures $a$ and $b$; that is,
$$b\circ\phi\circ a\preceq\phi.$$
$\CQ$-categories and $\CQ$-distributors constitute a quantaloid $\QDist$ that contains $\QRel$ as a full subquantaloid, in which the composition and internal homs are calculated in the same way as those of $\CQ$-relations; the identity $\CQ$-distributor on $(X,a)$ is given by its hom $a:(X,a)\oto(X,a)$.

Each $\CQ$-functor $f:(X,a)\to(Y,b)$ induces an adjunction $f_*\dv f^*$ in $\QDist$, given by
\begin{equation} \label{graph_def}
\begin{array}{ll}
f_*:(X,a)\oto(Y,b),& f_*(x,y)=b(fx,y)\quad\text{and}\\
f^*:(Y,b)\oto(X,a),& f^*(y,x)=b(y,fx),
\end{array}
\end{equation}
and called the \emph{graph} and \emph{cograph} of $f$, respectively. Obviously, $a=(1_X)_*=1_X^*$ for any $\CQ$-category $(X,a)$; hence, $a=1_X^*$ will be our standard notation for identity morphisms in $\QDist$.


For an object $s$ in $\CQ$, and with $\{s\}$ denoting the singleton $\CQ$-category, the only object of which has array $s$ and hom $1_s$, $\CQ$-distributors of the form $\si:X\oto\{s\}$ are called \emph{presheaves} on $X$ and constitute a $\CQ$-category $\PX$, with $1_{\PX}^*(\si,\si')=\si'\lda\si$. Dually, the \emph{copresheaf} $\CQ$-category $\PdX$ consists of $\CQ$-distributors $\tau:\{s\}\oto X$ with $1_{\PdX}^*(\tau,\tau')=\tau'\rda\tau$.

It is important to note that for any $\CQ$-category $X$, it follows from the definition that the underlying order on $\PdX$ is the \emph{reverse} local order of $\QDist$, \emph{i.e.},
$$\tau\leq\tau'\ \text{in}\ \PdX\iff\tau'\preceq\tau\ \text{in}\ \QDist.$$
That is why we use a different symbol, ``$\leq$'',  for the underlying order of $\CQ$-categories and the 2-cells in $\QCat$, while ``$\preceq$'' is reserved for ordering the 2-cells in $\CQ$ and $\QDist$.

A $\CQ$-category $X$ is \emph{complete} if the \emph{Yoneda embedding}
$$\sy_X:X\to\PX,\quad x\mapsto 1_X^*(-,x),$$
has a left adjoint $\sup_X:\PX\to X$ in $\QCat$; that is,
$$1_X^*({\sup}_X\si,-)=1_{\PX}^*(\si,\sy_X-)=1_X^*\lda\si$$
for all $\si\in\PX$. It is well known that $X$ is a complete $\CQ$-category if, and only if, $X^{\op}:=(X,(1_X^*)^{\op})$ with $(1_X^*)^{\op}(x,x')=1_X^*(x',x)$ is a complete $\CQ^{\op}$-category (see \cite{Stubbe2005}), where the completeness of $X^{\op}$ may be translated as the \emph{co-Yoneda embedding}
$$\syd_X:X\to\PdX,\quad x\mapsto 1_X^*(x,-),$$
admitting a right adjoint $\inf_X:\PdX\to X$ in $\QCat$.

\begin{lem} \label{Yoneda} {\rm\cite{Shen2013a,Stubbe2005}}
Let $X$ be a $\CQ$-category.
\begin{enumerate}[label={\rm (\arabic*)}]
\item (Yoneda Lemma) For all $\si\in\PX$, $\tau\in\PdX$,
    $$\si=(\sy_X)_*(-,\si)=1_{\PX}^*(\sy_X-,\si)\quad\text{and}\quad\tau=(\syd_X)^*(\tau,-)=1_{\PdX}^*(\tau,\syd_X-).$$
    In particular, both $\sy_X:X\to\PX$ and $\syd_X:X\to\PdX$ are fully faithful.
\item $\sup_X\cdot\sy_X\cong 1_X$,\quad $\inf_X\cdot\syd_X\cong 1_X$.
\item \label{PX_sup}
    Both $\PX$ and $\PdX$ are separated\footnote{A $\CQ$-category $X$ is \emph{separated} if $x\cong x'$ implies $x=x'$ for all $x,x'\in X$.} and complete, with
    $${\sup}_{\PX}\si=\si\circ(\sy_X)_*\quad\text{and}\quad{\inf}_{\PdX}\tau=(\syd_X)^*\circ\tau,$$
    for all $\si\in\PPX$, $\tau\in\PdPdX$.
\end{enumerate}
\end{lem}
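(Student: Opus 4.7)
The plan is to establish all three parts by unfolding definitions and invoking the quantaloid adjunctions $-\circ u\dv -\lda u$ and $v\circ -\dv v\rda -$ together with the distributor-compatibility $b\circ\phi\circ a\preceq\phi$; the only real difficulty is bookkeeping of arrays and of the direction of $\lda$ versus $\rda$.

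For part (1), the two expressions $(\sy_X)_*(-,\si)$ and $1_{\PX}^*(\sy_X-,\si)$ agree by the very definition of graph, so the Yoneda identity reduces to the assertion $\si(x)=\bw_{y\in X}\si(y)\lda a(y,x)$ in $\CQ$, where $a=1_X^*$. The inequality $\succeq$ drops out by specialising the infimum to $y=x$ and using $1_{|x|}\preceq a(x,x)$ together with the contravariance of $\lda$ in its second argument. The inequality $\preceq$ translates, via $-\circ u\dv -\lda u$, to $\si(x)\circ a(y,x)\preceq\si(y)$ for every $y$, and this is an instance of the distributor compatibility $\si\circ a\preceq\si$. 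The dual statement for $\tau\in\PdX$ is symmetric, with $v\rda-$ in place of $-\lda u$. Full fidelity of $\sy_X$ then drops out of the formula: applied to $\si=\sy_X x'$, it yields $1_{\PX}^*(\sy_X x,\sy_X x')=a(x,x')$, and dually for $\syd_X$.

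For part (2), specialising the completeness identity of $X$ to $\si=\sy_X x$ produces $1_X^*(\sup_X\sy_X x,y)=\bw_z a(z,y)\lda a(z,x)$; by part (1) applied to the presheaf $\sy_X y$, this infimum is precisely $a(x,y)=1_X^*(x,y)$. Hence $\sy_X(\sup_X\sy_X x)=\sy_X x$ in $\PX$, and full fidelity of $\sy_X$ then forces $\sup_X\sy_X x\cong x$. The statement for $\inf_X\cdot\syd_X$ is dual.

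For part (3), separatedness of $\PX$ is immediate: $\si\cong\si'$ gives $1_{|\si|}\preceq 1_{\PX}^*(\si,\si')=\si'\lda\si$, which by $-\circ u\dv -\lda u$ means $\si\preceq\si'$, and symmetrically $\si'\preceq\si$. For completeness, the candidate $\sup_{\PX}\Si=\Si\circ(\sy_X)_*$ must be shown to satisfy $\si\lda\sup_{\PX}\Si=(1_{\PX}^*\lda\Si)(*,\si)$ for every $\si\in\PX$. Expanding the left-hand side and using the Yoneda identity $(\sy_X)_*(x,\si')=\si'(x)$ rewrites it as $\bw_x\si(x)\lda\bv_{\si'}\Si(\si')\circ\si'(x)$, while the right-hand side unfolds to $\bw_{\si'}(\bw_x\si(x)\lda\si'(x))\lda\Si(\si')$. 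The two expressions are identified by the standard quantaloid identities $w\lda\bv_i u_i=\bw_i w\lda u_i$, $w\lda(u\circ v)=(w\lda v)\lda u$, and $\bw_x(h_x\lda c)=(\bw_x h_x)\lda c$, combined with swapping the two infima. The separatedness and completeness of $\PdX$, with $\inf_{\PdX}\tau=(\syd_X)^*\circ\tau$, follow by the dual calculation.
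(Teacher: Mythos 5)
Your proof is correct. Note that the paper does not actually prove this lemma; it is quoted from \cite{Shen2013a,Stubbe2005}, so there is no in-paper argument to compare against. Your direct, element-wise verification is the standard one and all the key steps check out: the Yoneda identity via $\si\circ 1_X^*\preceq\si$ and the counit $1_{|x|}\preceq 1_X^*(x,x)$; full fidelity by evaluating at representables; and the $\sup_{\PX}$ formula by the adjunction identities $w\lda\bv_i u_i=\bw_i(w\lda u_i)$ and $w\lda(u\circ v)=(w\lda v)\lda u$. One cosmetic remark on part (2): your computation yields $1_X^*(\sup_X\sy_X x,y)=1_X^*(x,y)$ for all $y$, which is equality of \emph{covariant} homs, i.e.\ $\syd_X(\sup_X\sy_X x)=\syd_X x$ rather than $\sy_X(\sup_X\sy_X x)=\sy_X x$ as written; but this already gives $\sup_X\sy_X x\cong x$ directly (specialise $y$ to $x$ and to $\sup_X\sy_X x$), so no appeal to full fidelity of $\sy_X$ is needed and the detour does no harm. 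Also recall that part (2) is only meaningful under the standing hypothesis that $\sup_X$ (resp.\ $\inf_X$) exists, i.e.\ that $X$ is complete, which your argument implicitly and correctly assumes.
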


Each $\CQ$-distributor $\phi:X\oto Y$ induces \emph{Kan adjunctions} \cite{Shen2013a} in $\QCat$ given by
\begin{equation} \label{Kan_def}
\bfig
\morphism/@{->}@<7pt>/<600,0>[\PY`\PX;\phi^{\od}]
\morphism(600,0)|b|/@{->}@<5pt>/<-600,0>[\PX`\PY;\phi_{\od}]
\place(290,0)[\bot]
\place(300,-220)[\phi^{\od}\tau=\tau\circ\phi,\quad\phi_{\od}\si=\si\lda\phi]
\place(1300,-100)[\text{and}]
\morphism(2000,0)/@{->}@<7pt>/<600,0>[\PdY`\PdX;\phi_{\opl}]
\morphism(2600,0)|b|/@{->}@<5pt>/<-600,0>[\PdX`\PdY;\phi^{\opl}]
\place(2290,0)[\bot]
\place(2300,-220)[\phi_{\opl}\tau=\phi\rda\tau,\quad\phi^{\opl}\si=\phi\circ\si.]
\efig
\end{equation}
Moreover, all the assignments in \eqref{graph_def} and \eqref{Kan_def} are 2-functorial, and one has two pairs of adjoint 2-functors \cite{Heymans2010} described by
\begin{equation} \label{QCat_QDist_adjunction}
\bfig
\morphism(-350,100)<400,0>[X`Y;\phi]
\place(-160,100)[\circ]
\morphism(-350,-100)<400,0>[Y`\PX;\olphi]
\morphism(-500,20)/-/<700,0>[`;]
\place(620,20)[\olphi y=\phi(-,y)]
\morphism(1600,0)/@{->}@<7pt>/<800,0>[\QCat`(\QDist)^{\op},;(-)^*]
\morphism(2400,0)|b|/@{->}@<5pt>/<-800,0>[(\QDist)^{\op},`\QCat;\sP]
\place(1950,0)[\bot]
\place(1500,-250)[(\phi^{\od}:\PY\to\PX)]
\morphism(2070,-250)/|->/<-150,0>[`;]
\place(2400,-250)[(\phi:X\oto Y)]
\morphism(-350,-500)<400,0>[X`Y;\phi]
\place(-160,-500)[\circ]
\morphism(-350,-700)<400,0>[X`\PdY;\ophi]
\morphism(-500,-580)/-/<700,0>[`;]
\place(620,-580)[\ophi x=\phi(x,-)]
\morphism(1600,-600)/@{->}@<7pt>/<800,0>[\QCat`(\QDist)^{\co},;(-)_*]
\morphism(2400,-600)|b|/@{->}@<5pt>/<-800,0>[(\QDist)^{\co},`\QCat;\sPd]
\place(1950,-600)[\bot]
\place(1450,-850)[(\phi^{\opl}:\PdX\to\PdY)]
\morphism(2070,-850)/|->/<-150,0>[`;]
\place(2400,-850)[(\phi:X\oto Y)]
\efig
\end{equation}
where ``$\co$'' refers to the dualization of 2-cells. The unit $\sy$ and the counit $\ep$ of the adjunction $(-)^*\dv\sP$ are respectively given by the Yoneda embeddings and their graphs:
$$\ep_X:=(\sy_X)_*:X\oto\PX.$$
The \emph{presheaf 2-monad} $\bbP=(\sP,\sfs,\sy)$ on $\QCat$ induced by $(-)^*\dv\sP$ sends each $\CQ$-functor $f:X\to Y$ to
$$f_!:=(f^*)^{\od}:\PX\to\PY,$$
which admits a right adjoint $f^!:=(f^*)_{\od}=(f_*)^{\od}:\PY\to\PX$ in $\QCat$; the monad multiplication $\sfs$ is given by
\begin{equation} \label{s_def}
\sfs_X=\ep_X^{\od}={\sup}_{\PX}=\sy_X^!:\PPX\to\PX,
\end{equation}
where ${\sup}_{\PX}=\sy_X^!$ is an immediate consequence of Lemma \ref{Yoneda}. Similarly, the unit $\syd$ is given by the co-Yoneda embeddings, and $\ep^{\dag}:=(\syd_{\Box})^*$ is the counit of the adjunction $(-)_*\dv\sPd$. The induced \emph{copresheaf 2-monad} $\bbP^{\dag}=(\sPd,\ssd,\syd)$ on $\QCat$ sends $f$ to
$$f_{\ie}:=(f_*)^{\opl}:\PdX\to\PdY,$$
which admits a left adjoint $f^{\ie}:=(f^*)^{\opl}=(f_*)_{\opl}:\PdY\to\PdX$ in $\QCat$, and the monad multiplication is given by
\begin{equation} \label{sd_def}
\ssd_X=(\ep_X^{\dag})^{\opl}={\inf}_{\PdX}=(\syd_X)^{\ie}:\PdPdX\to\PdX.
\end{equation}

We also point out that the presheaf 2-monad $\bbP$ on $\QCat$ is \emph{lax idempotent}, or \emph{of Kock-Z{\"o}berlein type} \cite{Stubbe2010}, in the sense that
$$(\sy_X)_!\leq\sy_{\PX}$$
for all $\CQ$-categories $X$. Dually, the copresheaf 2-monad $\bbP^{\dag}$ on $\QCat$ is \emph{oplax idempotent}, or \emph{of dual Kock-Z{\"o}berlein type}, in the sense that
$$\syd_{\PdX}\leq(\syd_X)_{\ie}$$
for all $\CQ$-categories $X$.

In a sequence of lemmata we now give a comprehensive list of rules that are needed in the calculations later on. While many of these rules are standard and well known, some are not and, in fact, new.

\begin{lem} \label{fully_faithful_presheaf} {\rm\cite{Shen2014,Shen2013a}}
Let $f:X\to Y$ be a $\CQ$-functor.
\begin{enumerate}[label={\rm (\arabic*)}]
\item \label{fully_faithful_presheaf:ff}
    $f$ is fully faithful $\iff$ $f^*\circ f_*=1_X^*$ $\iff$ $f^!\cdot f_!=1_{\PX}$ $\iff$ $f^{\ie}\cdot f_{\ie}=1_{\PdX}$ $\iff$ $f_!:\PX\to\PY$ is fully faithful $\iff$ $f_{\ie}:\PdX\to\PdY$ is fully faithful.
\item \label{fully_faithful_presheaf:es}
    If $f$ is essentially surjective, in the sense that, for any $y\in Y$,  there exists $x\in X$ with $y\cong fx$, then $f_*\circ f^*=1_Y^*$, $f_!\cdot f^!=1_{\PY}$, $f_{\ie}\cdot f^{\ie}=1_{\PdY}$ and both $f_!:\PX\to\PY$, $f_{\ie}:\PdX\to\PdY$ are surjective.
\end{enumerate}
\end{lem}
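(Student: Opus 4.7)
For Part (1), the strategy is to use the equivalence $f\text{ fully faithful}\iff f^{*}\circ f_{*}=1_{X}^{*}$ as a hub and to transport it along the Kan adjunctions \eqref{Kan_def} to each of the four remaining conditions. A direct expansion gives $(f^{*}\circ f_{*})(x,x')=\bv_{y\in Y}b(y,fx')\circ b(fx,y)$; the triangle inequality $b\circ b\preceq b$ bounds this above by $b(fx,fx')$, and setting $y=fx$ together with the reflexivity axiom $1_{|fx|}\preceq b(fx,fx)$ supplies the matching lower bound. Hence $(f^{*}\circ f_{*})(x,x')=b(fx,fx')$, and so $f^{*}\circ f_{*}=1_{X}^{*}$ is precisely the defining identity $a(x,x')=b(fx,fx')$ of full faithfulness.

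I would then verify the useful identity $(f^{*})_{\od}=(f_{*})^{\od}$, that is, $\tau\lda f^{*}=\tau\circ f_{*}$ for all $\tau\in\PY$: the inequality $\tau\circ f_{*}\preceq\tau\lda f^{*}$ follows from the counit $f_{*}\circ f^{*}\preceq 1_{Y}^{*}$, while the reverse follows by composing $(\tau\lda f^{*})\circ f^{*}\preceq\tau$ with $f_{*}$ on the right and invoking the unit $1_{X}^{*}\preceq f^{*}\circ f_{*}$. Using the functoriality identities $(\phi\circ\psi)^{\od}=\psi^{\od}\cdot\phi^{\od}$ and $(\phi\circ\psi)^{\opl}=\phi^{\opl}\cdot\psi^{\opl}$, one obtains
\[
f^{!}\cdot f_{!}=(f_{*})^{\od}\cdot(f^{*})^{\od}=(f^{*}\circ f_{*})^{\od},\qquad f^{\ie}\cdot f_{\ie}=(f^{*})^{\opl}\cdot(f_{*})^{\opl}=(f^{*}\circ f_{*})^{\opl}.
\]
The forward implications $f^{*}\circ f_{*}=1_{X}^{*}\Rightarrow f^{!}\cdot f_{!}=1_{\PX}$ and $\Rightarrow f^{\ie}\cdot f_{\ie}=1_{\PdX}$ are then immediate, since $(1_{X}^{*})^{\od}=1_{\PX}$ and $(1_{X}^{*})^{\opl}=1_{\PdX}$. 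For the converses, I would evaluate $\si\circ(f^{*}\circ f_{*})=\si$ at the representable presheaf $\si=\sy_{X}(x)$, and similarly $(f^{*}\circ f_{*})\circ\tau=\tau$ at $\tau=\syd_{X}(x)$, and invoke the Yoneda Lemma~\ref{Yoneda} together with the compatibility of $f^{*}\circ f_{*}$ with the hom of $X$ to read off the individual entries of $f^{*}\circ f_{*}$.

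To bridge $f^{!}\cdot f_{!}=1_{\PX}$ with full faithfulness of $f_{!}:\PX\to\PY$ (and likewise for $f_{\ie}$), I would invoke the standard 2-categorical fact that in an adjunction $L\dv R$ the left adjoint is fully faithful iff the unit is invertible, and dually the right adjoint is fully faithful iff the counit is invertible. Applied to $f_{!}\dv f^{!}$ this gives $f_{!}$ fully faithful iff $1_{\PX}\cong f^{!}\cdot f_{!}$, which by separatedness of $\PX$ (Lemma~\ref{Yoneda}) becomes the equality $f^{!}\cdot f_{!}=1_{\PX}$; the analogous argument for $f^{\ie}\dv f_{\ie}$, using the counit criterion and separatedness of $\PdX$, handles $f_{\ie}$.

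For Part (2), essential surjectivity sharpens the counit inequality $f_{*}\circ f^{*}\preceq 1_{Y}^{*}$ to an equality: picking $x\in X$ with $fx\cong y$, the estimates $b(y,fx),b(fx,y)\succeq 1_{|y|}$ and two applications of the triangle inequality yield
\[
b(y,y')\preceq b(fx,y')\preceq b(fx,y')\circ b(y,fx)\preceq (f_{*}\circ f^{*})(y,y').
\]
Dualising Part (1) one then obtains $f_{!}\cdot f^{!}=(f_{*}\circ f^{*})^{\od}=1_{\PY}$ and $f_{\ie}\cdot f^{\ie}=(f_{*}\circ f^{*})^{\opl}=1_{\PdY}$, and surjectivity of $f_{!}$ and $f_{\ie}$ is automatic: every $\tau\in\PY$ satisfies $\tau=f_{!}(f^{!}\tau)$ and every $\tau\in\PdY$ satisfies $\tau=f_{\ie}(f^{\ie}\tau)$. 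The step requiring the most care is the identification $(f^{*})_{\od}=(f_{*})^{\od}$ and its $\opl$-dual; with those in place, the remainder is a purely formal adjunction chase.
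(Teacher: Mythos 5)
The paper does not prove this lemma; it is imported verbatim from \cite{Shen2014,Shen2013a}, so there is no in-paper proof to compare against. Your argument is correct and is essentially the standard one: the pointwise computation $(f^{*}\circ f_{*})(x,x')=b(fx,fx')$ correctly identifies $f^{*}\circ f_{*}=1_{X}^{*}$ with full fidelity; the identity $(f^{*})_{\od}=(f_{*})^{\od}$ (which the paper in fact already records in Section 2 as $f^{!}=(f^{*})_{\od}=(f_{*})^{\od}$) together with the contravariance of $(-)^{\od}$ and covariance of $(-)^{\opl}$ reduces the next two conditions to $(f^{*}\circ f_{*})^{\od}=1_{\PX}$ and $(f^{*}\circ f_{*})^{\opl}=1_{\PdX}$, whose converses follow by evaluating at representables (or, more quickly, from faithfulness of $\sP$ and $\sPd$, which the paper uses elsewhere); and the unit/counit criteria for full fidelity of a left/right adjoint, combined with separatedness of $\PX$ and $\PdX$ from Lemma~\ref{Yoneda}\ref{PX_sup}, handle the last two equivalences. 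The computation in Part (2) sharpening $f_{*}\circ f^{*}\preceq 1_{Y}^{*}$ to an equality under essential surjectivity is likewise correct, and surjectivity of $f_{!}$, $f_{\ie}$ follows as you say from the retraction identities. No gaps.
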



\begin{lem} \label{adjoint_functor} {\rm\cite{Pu2015,Stubbe2005}}
For all $\CQ$-functors $f:X\to Y$ and $g:Y\to X$,
\begin{align*}
f\dv g &\iff f_*=g^*\iff f^!=g_!\iff f_{\ie}=g^{\ie}\\
&\iff f_!\dv g_!\iff f^!\dv g^!\iff f_{\ie}\dv g_{\ie}\iff f^{\ie}\dv g^{\ie}.
\end{align*}
\end{lem}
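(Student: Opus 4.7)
The plan is to arrange the eight equivalent assertions into a chain of implications organised in three layers: the core bijection $f\dv g\iff f_*=g^*$, its transfer along the presheaf and copresheaf constructions, and the adjunction-type conditions.

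First I would verify $f\dv g\iff f_*=g^*$ directly. Unfolding the definition of an adjunction in $\QCat$, namely $1_X\leq gf$ and $fg\leq 1_Y$, and combining with the formulas in \eqref{graph_def} yields the pointwise equation $b(fx,y)=a(x,gy)$ for all $x\in X$, $y\in Y$, which is precisely $f_*=g^*$. This is the standard hom-isomorphism characterisation of adjoint $\CQ$-functors.

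Next I would relate $f_*=g^*$ to $f^!=g_!$ and, dually, to $f_{\ie}=g^{\ie}$. The forward directions are immediate from the definitions $f^!=(f_*)^{\od}$, $g_!=(g^*)^{\od}$, $f_{\ie}=(f_*)^{\opl}$, $g^{\ie}=(g^*)^{\opl}$. For the converse $f^!=g_!\Rightarrow f_*=g^*$, I would evaluate both sides at the Yoneda element $\sy_Y(y)=1_Y^*(-,y)\in\PY$: a direct computation, exploiting that $1_Y^*$ is the identity distributor on $Y$, gives $f^!(\sy_Y(y))=\sy_Y(y)\circ f_*=f_*(-,y)$ and similarly $g_!(\sy_Y(y))=g^*(-,y)$, so equality at every $y$ forces $f_*=g^*$ pointwise. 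The copresheaf analogue, $f_{\ie}=g^{\ie}\Rightarrow f_*=g^*$, proceeds symmetrically via $\syd_Y$.

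Finally I would incorporate the four adjunction-type conditions, invoking the universal adjunctions $f_!\dv f^!$, $g_!\dv g^!$, $f^{\ie}\dv f_{\ie}$, $g^{\ie}\dv g_{\ie}$ already recorded in this section. Substituting $f^!=g_!$ immediately yields both $f_!\dv g_!$ and $f^!\dv g^!$. Conversely, $f_!\dv g_!$ makes $g_!$ and $f^!$ both right adjoints of $f_!$ in $\QCat$, hence isomorphic; the separatedness of $\PX$ (Lemma \ref{Yoneda}(3)) then upgrades this isomorphism to the equality $f^!=g_!$. The copresheaf analogues $f_{\ie}\dv g_{\ie}\iff f^{\ie}\dv g^{\ie}\iff f_{\ie}=g^{\ie}$ follow by the entirely dual argument, using $f^{\ie}\dv f_{\ie}$, $g^{\ie}\dv g_{\ie}$ and the separatedness of $\PdX$. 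The one genuine, if very mild, obstacle is exactly this invocation of separatedness to pass from ``uniqueness of adjoints up to isomorphism'' to equality, which is what permits the whole chain to consist of actual equalities rather than only isomorphisms of Kan-adjoint $\CQ$-functors.
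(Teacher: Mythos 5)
Your proof is correct. Note that the paper offers no proof of Lemma \ref{adjoint_functor} at all---it is quoted from \cite{Pu2015,Stubbe2005}---so there is nothing in the text to compare against; your three-layer argument (the hom-set characterisation $f\dv g\iff f_*=g^*$, its transfer along $(-)^{\od}$ and $(-)^{\opl}$ with the converses obtained by evaluating at Yoneda, respectively co-Yoneda, elements, and then uniqueness of adjoints upgraded to equality via separatedness of $\PX$ and $\PdX$ from Lemma \ref{Yoneda}\ref{PX_sup}) is the standard route and every step checks out, including the correct handling of the reversed adjunction direction $f^{\ie}\dv f_{\ie}$ on the copresheaf side. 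The only cosmetic remark is that the faithfulness of $\sP$ (hence of $\sPd$), which you re-derive by hand, is already recorded in the paper in the proof of Lemma \ref{dist_Yoneda} and could simply be cited.
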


\begin{lem} \label{functor_order}
For all $\CQ$-functors $f,g:X\to Y$ and $\CQ$-distributors $\phi,\psi:X\oto Y$,
\begin{enumerate}[label={\rm (\arabic*)}]
\item \label{f_leq_g}
$f\leq g\iff f_*\succeq g_*\iff f^*\preceq g^*\iff f_!\leq g_!\iff f_{\ie}\leq g_{\ie}\iff f^!\geq g^!\iff f^{\ie}\geq g^{\ie}.$
\item \label{phi_leq_psi}
$\phi\preceq\psi\iff\phi^{\od}\leq\psi^{\od}\iff\phi^{\opl}\geq\psi^{\opl}\iff\olphi\leq\olpsi\iff\ophi\geq\opsi$.
\end{enumerate}
\end{lem}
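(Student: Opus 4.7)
My plan is to establish both chains by a combination of direct computation with the definitions and the general principle that, for parallel pairs of adjoint $\CQ$-functors $L\dv R$ and $L'\dv R'$, one has $L\leq L'\iff R'\leq R$. The main bookkeeping point throughout is that the underlying order on $\PX$ agrees with $\preceq$ on $\QDist$, while the underlying order on $\PdX$ is the \emph{reverse} of $\preceq$ (as highlighted in the excerpt); this accounts for every sign change between $\leq/\succeq$ and between $\leq/\geq$ appearing in the chains.

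For (1), the core step is $f\leq g\iff f_*\succeq g_*\iff f^*\preceq g^*$. Writing $b=1_Y^*$, the hypothesis $1_{|x|}\preceq b(fx,gx)$ combines with transitivity of $b$ to force $b(fx,y)\succeq b(gx,y)$ and $b(y,gx)\succeq b(y,fx)$; conversely, evaluating these inequalities at $y=gx$ and $y=fx$ respectively recovers $1_{|x|}\preceq b(fx,gx)$. The remaining left-adjoint cases follow from the formulas $f_!\si=\si\circ f^*$ and $f_{\ie}\si=f_*\circ\si$: since the order on $\PY$ agrees with $\preceq$, one has $f_!\leq g_!\iff\si\circ f^*\preceq\si\circ g^*$ for all $\si$, which is equivalent to $f^*\preceq g^*$ (the converse by evaluating at $\si=\sy_X(x)$); whereas the reversed order on $\PdY$ converts $f_{\ie}\leq g_{\ie}$ into $g_*\circ\si\preceq f_*\circ\si$ for all $\si$, i.e., $f_*\succeq g_*$. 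Finally, $f^!\geq g^!$ and $f^{\ie}\geq g^{\ie}$ drop out by applying the adjoint-reversal principle to the adjunctions $f_!\dv f^!$ and $f^{\ie}\dv f_{\ie}$ recorded in Lemma \ref{adjoint_functor} and in the paragraphs preceding \eqref{s_def} and \eqref{sd_def}.

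For (2), $\phi\preceq\psi\iff\phi^{\od}\leq\psi^{\od}$ is immediate from monotonicity of composition in $\QDist$ together with the agreement of orders on $\PX$ with $\preceq$; the converse evaluates at $\tau=\sy_Y(y_0)$ and uses the Yoneda-type identity $\sy_Y(y_0)\circ\phi=\phi(-,y_0)$, which is just the distributor axiom $b\circ\phi=\phi$ read off at the $y_0$-slice. Analogously, $\phi\preceq\psi\iff\phi\circ\si\preceq\psi\circ\si$ for all $\si\in\PdX$, which under the reversed order on $\PdY$ becomes $\phi^{\opl}\geq\psi^{\opl}$. For $\olphi$ and $\ophi$, defined pointwise by $\olphi(y)=\phi(-,y)$ and $\ophi(x)=\phi(x,-)$, the equivalences $\olphi\leq\olpsi\iff\phi\preceq\psi\iff\ophi\geq\opsi$ are immediate from the orders on $\PX$ and $\PdY$ respectively.

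The only real obstacle is sign-tracking; once the two organizing observations noted at the start are in place, every equivalence reduces either to monotonicity of composition in $\QDist$ or to the adjoint duality, so no single step is technically demanding.
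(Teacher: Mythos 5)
Your argument is correct, but note that the paper itself offers no proof of this lemma: it is listed among the ``standard and well known'' rules in Section \ref{QCat} and stated without proof or citation, so there is no official argument to compare against. Your verification would serve as a complete proof. The two organizing observations are exactly right (the underlying order of $\PX$ coincides with $\preceq$, that of $\PdX$ reverses it), the core equivalence $f\leq g\iff f_*\succeq g_*\iff f^*\preceq g^*$ is correctly handled by transitivity of $b$ one way and evaluation at $y=gx$, $y=fx$ the other way (using $|fx|=|gx|=|x|$ so that $1_{|x|}\preceq b(gx,gx)$, $b(fx,fx)$), and the remaining cases reduce, as you say, to monotonicity of composition in $\QDist$ plus the adjoint-reversal principle applied to $f_!\dv f^!$ and $f^{\ie}\dv f_{\ie}$, which are indeed recorded in the text before \eqref{s_def} and \eqref{sd_def} (Lemma \ref{adjoint_functor} is about $f\dv g$ and is not really what you need there, but the adjunctions you invoke are stated independently). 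Two converses are only gestured at: for $f_{\ie}\leq g_{\ie}\Rightarrow f_*\succeq g_*$ and for $\phi^{\opl}\geq\psi^{\opl}\Rightarrow\phi\preceq\psi$ you need the co-Yoneda evaluation at $\si=\syd_X(x)$, using $f_*\circ\syd_X(x)=f_*(x,-)$ and $\phi\circ\syd_X(x)=\phi(x,-)$ (the slice of the distributor axioms $a\circ f^*=f^*$, $\phi\circ a=\phi$), exactly dual to the Yoneda evaluations you do spell out; your ``analogously'' covers this, and the same remark applies to the unproved identity $\sy_X(x)\circ f^*=f^*(-,x)$ behind the $f_!$ case. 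These are routine fills, so there is no genuine gap; your route also avoids any appeal to the later Lemmas \ref{dist_Yoneda} and \ref{functor_Yoneda}, so no circularity arises.
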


\begin{lem} \label{la_preserve_sup} {\rm\cite{Shen2016b,Stubbe2005}}
Let $f:X\to Y$ be a $\CQ$-functor between complete $\CQ$-categories. Then
$${\sup}_Y\cdot f_!\leq f\cdot{\sup}_X\quad\text{and}\quad f\cdot{\inf}_X\leq{\inf}_Y\cdot f_{\ie}.$$
Furthermore, $f$ is a left (resp. right) adjoint in $\QCat$ if, and only if, $\sup_Y\cdot f_!=f\cdot\sup_X$ (resp. $f\cdot\inf_X=\inf_Y\cdot f_{\ie}$).
\end{lem}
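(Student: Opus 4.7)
The plan is to obtain each of the two inequalities via a short chain that transports a unit or counit of the Yoneda adjunction through the naturality square of the unit of the relevant 2-monad. For $\sup_Y\cdot f_!\leq f\cdot\sup_X$, the key identity is the 2-naturality of $\sy$, namely $f_!\cdot\sy_X=\sy_Y\cdot f$, which holds because $\sy$ is the unit of the presheaf 2-monad $\bbP$. Starting from the unit $1_{\PX}\leq\sy_X\cdot\sup_X$ of the adjunction $\sup_X\dashv\sy_X$ and whiskering with $f_!$ produces $f_!\leq\sy_Y\cdot f\cdot\sup_X$; transposing through $\sup_Y\dashv\sy_Y$ then yields the desired inequality. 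The inequality $f\cdot\inf_X\leq\inf_Y\cdot f_{\ie}$ is proved dually, using 2-naturality of $\syd$, the counit $\syd_X\cdot\inf_X\leq 1_{\PdX}$ of $\syd_X\dashv\inf_X$, and the transpose under $\syd_Y\dashv\inf_Y$.

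For the ``furthermore'' part we focus on the sup-case; the inf-case is dual. For the direction $(\Leftarrow)$, assume $f\dashv g$ in $\QCat$. Applying $(-)_!$ to the unit $1_X\leq g\cdot f$ (via Lemma \ref{functor_order}\ref{f_leq_g} and 2-functoriality of $(-)_!$) produces $1_{\PX}\leq g_!\cdot f_!$. Whiskering on the left by $\sup_X$ and then invoking the first inequality applied to $g$ gives $\sup_X\leq\sup_X\cdot g_!\cdot f_!\leq g\cdot\sup_Y\cdot f_!$; finally applying $f$ and using the counit $f\cdot g\leq 1_Y$ produces $f\cdot\sup_X\leq f\cdot g\cdot\sup_Y\cdot f_!\leq\sup_Y\cdot f_!$, the reverse of the first inequality, hence equality.

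For the converse $(\Rightarrow)$, suppose $\sup_Y\cdot f_!=f\cdot\sup_X$. The natural candidate for a right adjoint of $f$ is the composite $g:=\sup_X\cdot f^!\cdot\sy_Y:Y\to X$. Using $\sy_Y\cdot f=f_!\cdot\sy_X$ one rewrites $g\cdot f$ as $\sup_X\cdot f^!\cdot f_!\cdot\sy_X$; dually, using the hypothesis, $f\cdot g$ rewrites as $\sup_Y\cdot f_!\cdot f^!\cdot\sy_Y$. The unit $1_{\PX}\leq f^!\cdot f_!$ and counit $f_!\cdot f^!\leq 1_{\PY}$ of $f_!\dashv f^!$, combined with $\sup_X\cdot\sy_X\cong 1_X$ and $\sup_Y\cdot\sy_Y\cong 1_Y$ from Lemma \ref{Yoneda}(2), then yield $1_X\leq g\cdot f$ and $f\cdot g\leq 1_Y$, establishing $f\dashv g$ in $\QCat$.

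The only genuinely non-obvious step is guessing the formula $g=\sup_X\cdot f^!\cdot\sy_Y$ in the $(\Rightarrow)$ direction; once this candidate is written down, the verifications are straightforward bookkeeping with the units and counits of the adjunctions $f_!\dashv f^!$, $\sup_X\dashv\sy_X$, and $\sup_Y\dashv\sy_Y$.
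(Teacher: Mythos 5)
Your proof is correct. The paper itself gives no argument for this lemma (it is quoted from \cite{Shen2016b,Stubbe2005}), so there is nothing to compare against; what you have written is the standard mate-calculus argument, and every step checks out against the toolkit the paper does provide: the two inequalities follow from whiskering the unit of $\sup_X\dv\sy_X$ (resp.\ the counit of $\syd_X\dv\inf_X$) and transposing across $\sup_Y\dv\sy_Y$ (resp.\ $\syd_Y\dv\inf_Y$), using the naturality identities $f_!\cdot\sy_X=\sy_Y\cdot f$ and $f_{\ie}\cdot\syd_X=\syd_Y\cdot f$ of Lemma \ref{functor_Yoneda}; and your candidate right adjoint $g=\sup_X\cdot f^!\cdot\sy_Y$ is exactly the one used in the cited sources, with the verification of $1_X\leq g\cdot f$ and $f\cdot g\leq 1_Y$ sufficing because $\QCat$ is order-enriched, so the triangle identities are automatic.
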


The above lemma shows that left (resp. right) adjoint $\CQ$-functors between complete $\CQ$-categories are exactly $\sup$-preserving (resp. $\inf$-preserving) $\CQ$-functors. Thus we denote the 2-subcategory of $\QCat$ consisting of separated complete $\CQ$-categories and $\sup$-preserving (resp. $\inf$-preserving) $\CQ$-functors by $\QSup$ (resp. $\QInf$).

\begin{lem} \label{dist_Yoneda}
The following identities hold for all $\CQ$-distributors $\phi:X\oto Y$.
\begin{enumerate}[label={\rm (\arabic*)}]
\item \label{dist_Yoneda:y}
    $\sy_X=\ola{1_X^*}$,\quad $\syd_X=\ora{1_X^*}$.
\item \label{dist_Yoneda:id}
    $1_{\PX}=\ola{(\sy_X)_*}$,\quad $1_{\PdX}=\ora{(\syd_X)^*}$.
\item \label{dist_Yoneda:olphi}
    $\olphi=\phi^{\od}\cdot\sy_Y$,\quad $\ophi=\phi^{\opl}\cdot\syd_X$.
\item \label{dist_Yoneda:phi}
    $\phi=\olphi^*\circ(\sy_X)_*=(\syd_Y)^*\circ\ophi_*$.
\item \label{dist_Yoneda:yphi}
    $(\sy_Y)_*\circ\phi=\phi^{\od *}\circ(\sy_X)_*$,\quad $\phi\circ(\syd_X)^*=(\syd_Y)^*\circ(\phi^{\opl})_*$.
\end{enumerate}
\end{lem}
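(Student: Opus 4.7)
The plan is to verify each clause by direct computation from the definitions of the Yoneda embeddings, graphs and cographs, and the Kan adjunctions in \eqref{Kan_def}, using the Yoneda Lemma (Lemma \ref{Yoneda}) to collapse presheaf evaluations and the adjunctions $(-)^*\dv\sP$ and $(-)_*\dv\sPd$ from \eqref{QCat_QDist_adjunction} to handle the transposition formulas. Within each clause the second equality is dual to the first (swapping $\sP$ with $\sPd$, $\sy$ with $\syd$, $\ola{(-)}$ with $\ora{(-)}$, and $\od$ with $\opl$), so I treat only the first equality in each item.

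For \ref{dist_Yoneda:y}, both $\sy_X(x)$ and $\ola{1_X^*}(x)=1_X^*(-,x)$ coincide by definition. For \ref{dist_Yoneda:id}, the $\CQ$-functor $\ola{(\sy_X)_*}:\PX\to\PX$ sends $\si$ to $(\sy_X)_*(-,\si)=1_{\PX}^*(\sy_X-,\si)$, which equals $\si$ by Yoneda; hence $\ola{(\sy_X)_*}=1_{\PX}$. For \ref{dist_Yoneda:olphi}, $(\phi^{\od}\cdot\sy_Y)(y)=\sy_Y(y)\circ\phi$, and because $\sy_Y(y)(y')=1_Y^*(y',y)$, the value at $x$ is $(1_Y^*\circ\phi)(x,y)=\phi(x,y)$ by the identity law in $\QDist$; so $(\phi^{\od}\cdot\sy_Y)(y)=\phi(-,y)=\olphi(y)$.

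Clause \ref{dist_Yoneda:phi} asserts that $\phi$ is recovered from $\olphi:Y\to\PX$ by composing its cograph $\olphi^*$ with the counit $(\sy_X)_*$ of the adjunction $(-)^*\dv\sP$, which is the standard counit-based transposition formula; for a self-contained check one computes $(\olphi^*\circ(\sy_X)_*)(x,y)=\bv_{\si\in\PX}(\olphi(y)\lda\si)\circ\si(x)$, observes that the $\si=\sy_X(x)$-term equals $\olphi(y)(x)=\phi(x,y)$ by Yoneda, and notes that every term is bounded above by $\phi(x,y)$ by the defining adjoint property of $\lda$ applied to $\olphi(y)=\phi(-,y)$.

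Finally, \ref{dist_Yoneda:yphi} expresses naturality of $(\sy_{-})_*$ as the counit of $(-)^*\dv\sP$. One computes $((\sy_Y)_*\circ\phi)(x,\tau)=\bv_y\tau(y)\circ\phi(x,y)=(\tau\circ\phi)(x)=(\phi^{\od}\tau)(x)$, and the same value for $(\phi^{\od *}\circ(\sy_X)_*)(x,\tau)$ is obtained by repeating the Yoneda-plus-$\lda$ argument of \ref{dist_Yoneda:phi} with $\olphi$ replaced by $\phi^{\od}$. I expect no conceptual difficulty; the only genuine obstacle is careful bookkeeping, across the ten identities, between presheaves and copresheaves, graphs and cographs, and ``$\od$'' versus ``$\opl$''.
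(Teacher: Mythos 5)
Your proposal is correct. For items (1)--(4) it is essentially the paper's own argument (unwind the definitions and invoke the Yoneda Lemma \ref{Yoneda}); the only small imprecision is in (4), where the $\si=\sy_X x$ term of your supremum is literally $(\olphi(y)\lda\sy_X x)\circ 1_X^*(x,x)=\phi(x,y)\circ 1_X^*(x,x)$ rather than $\phi(x,y)$ on the nose, but since $1_X^*(x,x)\succeq 1_{|x|}$ this still gives the lower bound $\phi(x,y)\preceq(\olphi^*\circ(\sy_X)_*)(x,y)$, and your $\lda$-adjunction upper bound then forces equality. The genuine divergence is in item (5): you prove $(\sy_Y)_*\circ\phi=\phi^{\od *}\circ(\sy_X)_*$ by a pointwise computation, evaluating both sides at $(x,\tau)$ with $\tau\in\PY$ and reducing each to $(\phi^{\od}\tau)(x)=(\tau\circ\phi)(x)$ via Yoneda and residuation, whereas the paper applies the faithful 2-functor $\sP=(-)^{\od}:(\QDist)^{\op}\to\QCat$ to both sides and checks $\phi^{\od}\cdot{\sup}_{\PY}={\sup}_{\PX}\cdot(\phi^{\od})_!$ using Lemma \ref{la_preserve_sup}, $\phi^{\od}$ being a left adjoint by the Kan adjunctions \eqref{Kan_def}. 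Your route is more elementary and self-contained, needing only the Yoneda lemma and the defining property of $\lda$; the paper's route is shorter once Lemma \ref{la_preserve_sup} is available and makes the structural content visible (the identity is the naturality square of the counit $\ep_X=(\sy_X)_*$, equivalently sup-preservation by the left adjoint $\phi^{\od}$), and it dualizes verbatim to the second identity. Your blanket appeal to duality for the second equality in each clause is legitimate, since every tool you use has an exact dual under exchanging $\sP,\sy,\ola{(-)},\od$ with $\sPd,\syd,\ora{(-)},\opl$.
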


\begin{proof}
(1), (3) are trivial, and (2), (4) are immediate consequences of the Yoneda lemma. For (5), note that the 2-functor
$$\sP:(\QDist)^{\op}\to\QCat,\quad(\phi:X\oto Y)\mapsto(\phi^{\od}:\PY\to\PX)$$
is faithful, and
$$((\sy_Y)_*\circ\phi)^{\od}=\phi^{\od}\cdot\sy_Y^!=\phi^{\od}\cdot{\sup}_{\PY}={\sup}_{\PX}\cdot(\phi^{\od})_!=\sy_X^!\cdot\phi^{\od*\od}=(\phi^{\od *}\circ(\sy_X)_*)^{\od}$$
follows by applying Lemma \ref{la_preserve_sup} to the left adjoint $\CQ$-functor $\phi^{\od}:\PY\to\PX$. The other identity can be verified analogously.
\end{proof}

\begin{lem} \label{functor_Yoneda}
The following identities hold for all $\CQ$-functors $f:X\to Y$.
\begin{enumerate}[label={\rm (\arabic*)}]
\item \label{functor_Yoneda:f_double}
    $f_{\ie !}=f^{\ie !}$,\quad $f_{!\ie}=f^{!\ie}$,\quad $(f_!)^!=(f^!)_!$,\quad $(f_{\ie})^{\ie}=(f^{\ie})_{\ie}$.
\item \label{functor_Yoneda:yd_nat}
    $\ola{f_*}=f^!\cdot\sy_Y$,\quad $\ora{f_*}=\syd_Y\cdot f=f_{\ie}\cdot\syd_X$.
\item  \label{functor_Yoneda:y_nat}
    $\ora{f^*}=f^{\ie}\cdot\syd_Y$,\quad $\ola{f^*}=\sy_Y\cdot f=f_!\cdot\sy_X$.
\item \label{functor_Yoneda:yf}
    $(\sy_X)_*\circ f^*=(f_!)^*\circ(\sy_Y)_*$,\quad $f_!\cdot\sy_X^!=\sy_Y^!\cdot f_{!!}$,\quad $(\sy_X)_{\ie}\cdot f^{\ie}=(f_!)^{\ie}\cdot(\sy_Y)_{\ie}$.
\item \label{functor_Yoneda:fyd}
    $f_*\circ(\syd_X)^*=(\syd_Y)^*\circ(f_{\ie})_*$,\quad $f_{\ie}\cdot(\syd_X)^{\ie}=(\syd_Y)^{\ie}\cdot f_{\ie\ie}$,\quad $(\syd_X)_!\cdot f^!=(f_{\ie})^!\cdot(\syd_Y)_!$.
\end{enumerate}
\end{lem}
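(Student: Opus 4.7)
The plan is to reduce each of the five parts to the previously established Lemmata \ref{adjoint_functor}, \ref{dist_Yoneda} and \ref{la_preserve_sup}, together with the definitional identities $f_!=(f^*)^{\od}$, $f^!=(f_*)^{\od}$, $f_{\ie}=(f_*)^{\opl}$, $f^{\ie}=(f^*)^{\opl}$ recorded just before Lemma \ref{Yoneda}, and the 2-functoriality of $\sP$ and $\sPd$ displayed in \eqref{QCat_QDist_adjunction}; no new ideas are needed beyond careful bookkeeping. Part \ref{functor_Yoneda:f_double} is immediate from the adjunctions $f_!\dv f^!$ and $f^{\ie}\dv f_{\ie}$ in $\QCat$ (which hold by construction): each of the four claimed equalities is a direct instance of the equivalences $f\dv g\iff f^!=g_!$ and $f\dv g\iff f_{\ie}=g^{\ie}$ supplied by Lemma \ref{adjoint_functor}, applied to one of these two adjunctions.

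For \ref{functor_Yoneda:yd_nat} and \ref{functor_Yoneda:y_nat} I would specialize the two formulas $\olphi=\phi^{\od}\cdot\sy_Y$ and $\ophi=\phi^{\opl}\cdot\syd_X$ of Lemma \ref{dist_Yoneda}\ref{dist_Yoneda:olphi} to $\phi=f_*$ and to $\phi=f^*$; after rewriting via the definitional identities this immediately yields $\ola{f_*}=f^!\cdot\sy_Y$, $\ora{f_*}=f_{\ie}\cdot\syd_X$, $\ola{f^*}=f_!\cdot\sy_X$ and $\ora{f^*}=f^{\ie}\cdot\syd_Y$. The two remaining equalities $\ora{f_*}=\syd_Y\cdot f$ and $\ola{f^*}=\sy_Y\cdot f$ follow by pointwise evaluation from the definitions of the graphs $f_*,f^*$ and of the Yoneda embeddings, e.g.\ $(\syd_Y\cdot f)(x)=1_Y^*(fx,-)=f_*(x,-)=\ora{f_*}(x)$.

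For \ref{functor_Yoneda:yf} and \ref{functor_Yoneda:fyd} the first identity in each item will come from Lemma \ref{dist_Yoneda}\ref{dist_Yoneda:yphi} applied to $\phi=f^*$ (resp.\ $\phi=f_*$), after rewriting $(f^*)^{\od}$ as $f_!$ (resp.\ $(f_*)^{\opl}$ as $f_{\ie}$). Starting from the first identity of \ref{functor_Yoneda:yf}, I would apply the 2-functor $\sPd:(\QDist)^{\co}\to\QCat$, which preserves composition of 1-cells, to arrive at the third identity $(\sy_X)_{\ie}\cdot f^{\ie}=(f_!)^{\ie}\cdot(\sy_Y)_{\ie}$; symmetrically, applying $\sP:(\QDist)^{\op}\to\QCat$ (which reverses composition of 1-cells) to the first identity of \ref{functor_Yoneda:fyd} yields the third identity $(\syd_X)_!\cdot f^!=(f_{\ie})^!\cdot(\syd_Y)_!$. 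Finally, the middle identities $f_!\cdot\sy_X^!=\sy_Y^!\cdot f_{!!}$ and $f_{\ie}\cdot(\syd_X)^{\ie}=(\syd_Y)^{\ie}\cdot f_{\ie\ie}$ are instances of Lemma \ref{la_preserve_sup}, applied to the left adjoint $f_!$ and to the right adjoint $f_{\ie}$ respectively, after the identifications $\sy_X^!=\sup_{\PX}$ from \eqref{s_def} and $(\syd_X)^{\ie}=\inf_{\PdX}$ from \eqref{sd_def}.

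The main obstacle I foresee is purely notational: iterated decorations such as $f_{\ie !}$, $f^{!\ie}$, $f_{!!}$ and $f_{\ie\ie}$ must be read with the outer operator applied last, and at each step one has to keep careful track of the hom-category in which an equation is being asserted and of the orientation convention ``$\preceq$'' versus ``$\leq$'', so that $\sP$-functoriality, $\sPd$-functoriality and Lemma \ref{la_preserve_sup} are invoked on the correct side of each equation.
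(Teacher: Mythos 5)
Your proposal is correct and follows essentially the same route as the paper's own (very terse) proof: part \ref{functor_Yoneda:f_double} from the adjunctions $f_!\dv f^!$, $f^{\ie}\dv f_{\ie}$ via Lemma \ref{adjoint_functor}, parts \ref{functor_Yoneda:yd_nat}--\ref{functor_Yoneda:y_nat} from Lemma \ref{dist_Yoneda}\ref{dist_Yoneda:olphi} together with the (pointwise verified) naturality of $\sy$ and $\syd$, and the outer identities of \ref{functor_Yoneda:yf}--\ref{functor_Yoneda:fyd} from Lemma \ref{dist_Yoneda}\ref{dist_Yoneda:yphi} by applying $\sP$ and $\sPd$. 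Your only deviation is deriving the middle identities of \ref{functor_Yoneda:yf} and \ref{functor_Yoneda:fyd} from Lemma \ref{la_preserve_sup} via $\sy_X^!={\sup}_{\PX}$ and $(\syd_X)^{\ie}={\inf}_{\PdX}$, rather than by applying $(-)^{\od}$, respectively $(-)^{\opl}$, to the first identities; both are equally immediate and correct.
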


\begin{proof}
For (1), $f_{\ie !}=f^{\ie !}$ since $(f^{\ie})_!\dv f^{\ie !}$ and $(f^{\ie})_!\dv f_{\ie !}$, and the other identities can be checked similarly. The non-trivial identities in (2) and (3) follow respectively from the naturality of $\syd$ and $\sy$, while (4) and (5) are immediate consequences of Lemma \ref{dist_Yoneda}\ref{dist_Yoneda:yphi}.
\end{proof}

\begin{lem} \label{transpose}
The following identities hold for all $\CQ$-distributors $\phi:X\oto Y$, $\psi:Y\oto Z$ and $\CQ$-functors $f$ whenever the operations make sense:
\begin{enumerate}[label={\rm (\arabic*)}]
\item \label{transpose:l}
    $\ola{\psi\circ\phi}=\phi^{\od}\cdot\olpsi=\sy_X^!\cdot\olphi_!\cdot\olpsi$,\quad $\ola{\psi\circ f^*}=f_!\cdot\olpsi$,\quad $\ola{f^*\circ\phi}=\olphi\cdot f$.
\item \label{transpose:r}
    $\ora{\psi\circ\phi}=\psi^{\opl}\cdot\ophi=(\syd_Z)^{\ie}\cdot\opsi_{\ie}\cdot\ophi$,\quad $\ora{\psi\circ f_*}=\opsi\cdot f$,\quad $\ora{f_*\circ\phi}=f_{\ie}\cdot\ophi$.
\end{enumerate}
\end{lem}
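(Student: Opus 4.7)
The overall strategy is to reduce every equation to a combination of the transpose identity $\olphi=\phi^{\od}\cdot\sy_Y$ of Lemma~\ref{dist_Yoneda}\ref{dist_Yoneda:olphi} and its dual, together with the naturality formulas already collected in Lemmas~\ref{dist_Yoneda} and~\ref{functor_Yoneda}. The two parts are formally dual, so I would prove~(1) in detail and leave (2) to the reader after supplying the substitution table $(-)^{\od}\leftrightarrow(-)^{\opl}$, $\sy\leftrightarrow\syd$, $(-)_*\leftrightarrow(-)^*$, $f_!\leftrightarrow f_{\ie}$.

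For part~(1), the first observation is the contravariant functoriality $(\psi\circ\phi)^{\od}=\phi^{\od}\cdot\psi^{\od}$, which is immediate from the pointwise formula $\phi^{\od}\tau=\tau\circ\phi$ and associativity of distributor composition. Applying Lemma~\ref{dist_Yoneda}\ref{dist_Yoneda:olphi} to $\psi\circ\phi$ then yields
$$\ola{\psi\circ\phi}=(\psi\circ\phi)^{\od}\cdot\sy_Z=\phi^{\od}\cdot\psi^{\od}\cdot\sy_Z=\phi^{\od}\cdot\olpsi.$$
The third and fourth identities of~(1) then fall out by specializing $\psi$ or $\phi$ to a cograph $f^*$, using $(f^*)^{\od}=f_!$ (the very definition of $f_!$) together with the naturality $f_!\cdot\sy_X=\sy_Y\cdot f$ taken from Lemma~\ref{functor_Yoneda}\ref{functor_Yoneda:y_nat}.

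The second equality in the first identity, $\phi^{\od}=\sy_X^!\cdot\olphi_!$, is the one step that is not purely formal. My plan here is to unfold $\olphi_!=(\olphi^*)^{\od}$, use $\sy_X^!=\sup_{\PX}$ from~\eqref{s_def} together with the explicit formula $\sup_{\PX}\si=\si\circ(\sy_X)_*$ from Lemma~\ref{Yoneda}\ref{PX_sup}, and finally invoke $\phi=\olphi^*\circ(\sy_X)_*$ from Lemma~\ref{dist_Yoneda}\ref{dist_Yoneda:phi}; this chains into a one-line computation yielding $(\sy_X^!\cdot\olphi_!)(\tau)=\tau\circ\olphi^*\circ(\sy_X)_*=\tau\circ\phi=\phi^{\od}(\tau)$. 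Conceptually, this records that the Kan lift $\phi^{\od}$ is recovered from its ``curried'' form $\olphi$ by free colimit extension along the Yoneda embedding.

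Part~(2) proceeds identically, with $\phi^{\od}$ replaced by $\phi^{\opl}$ (noting $(\psi\circ\phi)^{\opl}=\psi^{\opl}\cdot\phi^{\opl}$), $\sy_X$ by $\syd_X$, and with the dual ingredients $\ora{\phi}=\phi^{\opl}\cdot\syd_X$ and $\phi=(\syd_Y)^*\circ\ophi_*$ from Lemma~\ref{dist_Yoneda}, $(\syd_X)^{\ie}=\inf_{\PdX}$ from~\eqref{sd_def} together with the explicit formula in Lemma~\ref{Yoneda}\ref{PX_sup}, and the naturality $f_{\ie}\cdot\syd_X=\syd_Y\cdot f$ from Lemma~\ref{functor_Yoneda}\ref{functor_Yoneda:yd_nat}. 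There is no genuine obstacle anywhere in the argument; everything is bookkeeping against the Yoneda / Kan-extension dictionary already assembled in the preceding lemmas.
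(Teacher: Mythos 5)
Your proposal is correct and follows exactly the route the paper intends: the paper's proof is just the remark ``straightforward calculations with the help of Lemmas \ref{dist_Yoneda} and \ref{functor_Yoneda}'', and your computation fills in precisely those calculations (contravariant functoriality of $(-)^{\od}$, the transpose formula $\olphi=\phi^{\od}\cdot\sy_Y$, the identity $\phi=\olphi^*\circ(\sy_X)_*$, and $\sy_X^!=\sup_{\PX}$), with the dualization for part (2) handled correctly. No gaps.
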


\begin{proof}
Straightforward calculations with the help of Lemmas \ref{dist_Yoneda} and \ref{functor_Yoneda}.
\end{proof}

\begin{lem} \label{fyx_leq_gyx}
For $\CQ$-functors $f,g:\PX\to Y$ (resp. $f,g:\PdX\to Y$), if $f$ (resp. $g$) is a left (resp. right) adjoint in $\QCat$, then
$$f\sy_X\leq g\sy_X\ (\text{resp.}\ f\syd_X\leq g\syd_X)\iff f\leq g.$$
\end{lem}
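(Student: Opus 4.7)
The non-trivial direction is ``$\Leftarrow$''; the forward direction is immediate by post-composing $f\leq g$ with $\sy_X$ (resp. $\syd_X$) using Lemma \ref{functor_order}. My plan for the $\PX$-case is to use the given adjunction to reduce the claim to an inequality of endofunctors of $\PX$, where the density of $\sy_X$ together with Lemma \ref{la_preserve_sup} can be exploited.

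Let $h:Y\to\PX$ be a right adjoint to $f$. Whiskering $f\sy_X\leq g\sy_X$ with $h$ and invoking the unit $1_{\PX}\leq hf$ at $\sy_X$ yields $\sy_X\leq(hg)\sy_X$; conversely, any inequality $1_{\PX}\leq hg$ gives $f\leq fhg\leq g$ upon whiskering with $f$ and applying the counit $fh\leq 1_Y$. Writing $k:=hg:\PX\to\PX$, it thus suffices to prove that $k\sy_X\geq\sy_X$ implies $k\geq 1_{\PX}$. By Lemma \ref{Yoneda} the Yoneda embedding $\sy_X$ is fully faithful, so Lemma \ref{fully_faithful_presheaf}\ref{fully_faithful_presheaf:ff} combined with \eqref{s_def} gives $\sup_{\PX}\cdot(\sy_X)_!=\sy_X^!\cdot(\sy_X)_!=1_{\PX}$, i.e., every $\si\in\PX$ satisfies $\si=\sup_{\PX}((\sy_X)_!\si)$. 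Applying Lemma \ref{la_preserve_sup} to the endofunctor $k$ of the complete $\CQ$-category $\PX$, together with Lemma \ref{functor_order}\ref{f_leq_g} for the monotonicity of $(-)_!$, one then computes
$$k\si=k\cdot\sup_{\PX}((\sy_X)_!\si)\geq\sup_{\PX}(k_!(\sy_X)_!\si)=\sup_{\PX}((k\sy_X)_!\si)\geq\sup_{\PX}((\sy_X)_!\si)=\si,$$
as required.

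The $\PdX$-case proceeds dually: if $\ell\dv g$ with $\ell:Y\to\PdX$, the mate step turns $f\syd_X\leq g\syd_X$ into $(\ell f)\syd_X\leq\syd_X$, and reduces $f\leq g$ to $\ell f\leq 1_{\PdX}$. Full faithfulness of $\syd_X$ combined with \eqref{sd_def} yields $\inf_{\PdX}\cdot(\syd_X)_{\ie}=(\syd_X)^{\ie}\cdot(\syd_X)_{\ie}=1_{\PdX}$, and the second inequality of Lemma \ref{la_preserve_sup} applied to $\ell f$ on $\PdX$ gives, by an argument dual to the above, $(\ell f)\tau\leq\tau$ for every $\tau\in\PdX$.

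The main point worth flagging (rather than a deep obstacle) is that $Y$ is not assumed to be a complete $\CQ$-category, so Lemma \ref{la_preserve_sup} cannot be applied directly to $f$ or $g$; the role of the adjunction hypothesis is precisely to route the argument through the complete $\CQ$-category $\PX$ (or $\PdX$) via the mate, where the density of the Yoneda (or co-Yoneda) embedding then does the work.
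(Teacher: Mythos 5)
Your proof is correct, and its skeleton---using the adjoint $h$ (resp.\ $\ell$) to reduce everything to showing that $\sy_X\leq k\sy_X$ forces $1_{\PX}\leq k$ for the composite $k=hg$ (resp.\ the dual claim for $\ell f$ on $\PdX$)---is exactly the paper's. The only difference is how that middle step is verified: the paper argues pointwise with the Yoneda lemma and the $\CQ$-functoriality of $hg$, namely $\si=1_{\PX}^*(\sy_X-,\si)\preceq 1_{\PX}^*(hg\sy_X-,hg\si)\preceq 1_{\PX}^*(\sy_X-,hg\si)=hg\si$, whereas you invoke the density identity $\sup_{\PX}\cdot(\sy_X)_!=1_{\PX}$ (from full fidelity of $\sy_X$ and \eqref{s_def}) together with the lax $\sup$-preservation of Lemma \ref{la_preserve_sup}. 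Both routes are valid; the paper's is marginally more elementary, needing nothing beyond the Yoneda lemma, while yours makes the roles of density, completeness of $\PX$, and the adjunction hypothesis more explicit, and spells out the copresheaf case symmetrically where the paper leaves it to duality.
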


\begin{proof}
For the non-trivial direction, suppose that $f\dv h:Y\to\PX$, then $f\sy_X\leq g\sy_X$ implies $\sy_X\leq hg\sy_X$. Consequently, the Yoneda lemma and the $\CQ$-functoriality of $hg:\PX\to\PX$ imply
$$\si=(\sy_X)_*(-,\si)=1_{\PX}^*(\sy_X-,\si)\preceq 1_{\PX}^*(hg\sy_X-,hg\si)\preceq 1_{\PX}^*(\sy_X-,hg\si)=hg\si$$
and thus $\si\leq hg\si$, hence $f\si\leq g\si$ for all $\si\in\PX$.
%
\end{proof}

Since one already has the isomorphisms of ordered hom-sets
\begin{center}
\renewcommand\arraystretch{1.5}
\setlength{\tabcolsep}{1pt}
\begin{tabular}{ccccc}
$\QDist(X,Y)$ & $\cong$ & $\QCat(Y,\PX)$ & $\cong$ & $(\QCat)^{\co}(X,\PdY),$\\
$\phi$ & $\stackrel{\thicksim}{\longleftrightarrow}$ & $\olphi$ & $\stackrel{\thicksim}{\longleftrightarrow}$ & $\ophi$
\end{tabular}
\end{center}
with the adjunctions \eqref{QCat_QDist_adjunction} we obtain further isomorphisms in $\QSup$ and $\QInf$, as follows.

\begin{lem} \label{PX_PdY_la} {\rm\cite{Shen2013a}}
For all $\CQ$-categories $X,Y$, one has the natural isomorphisms of ordered hom-sets
\begin{align*}
\QDist(X,Y)&\cong(\QSup)^{\co}(\PX,\PdY)\cong\QInf(\PdY,\PX)\\
&\cong\QSup(\PY,\PX)\cong(\QInf)^{\co}(\PX,\PY).
\end{align*}
\end{lem}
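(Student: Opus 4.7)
The plan is to reduce all four isomorphisms to the two basic ones already present in the 2-adjunctions \eqref{QCat_QDist_adjunction},
\[
\QDist(X,Y)\cong\QCat(Y,\PX),\ \phi\mapsto\olphi, \qquad \QDist(X,Y)\cong(\QCat)^{\co}(X,\PdY),\ \phi\mapsto\ophi,
\]
which are order-isomorphisms by Lemma \ref{functor_order}\ref{phi_leq_psi}. The remaining ingredient is the free (co)completion property of $\PY$ and $\PdY$: for every separated complete $\CQ$-category $Z$, restriction along the (co-)Yoneda embeddings yields order-isomorphisms
\[
\QSup(\PY,Z)\cong\QCat(Y,Z)\cong\QInf(\PdY,Z).
\]
Indeed, by Lemma \ref{la_preserve_sup}, sup-preserving (resp.\ inf-preserving) $\CQ$-functors between separated complete $\CQ$-categories are precisely left (resp.\ right) adjoints; Lemma \ref{fyx_leq_gyx} then implies that $g\mapsto g\cdot\sy_Y$ (resp.\ $h\mapsto h\cdot\syd_Y$) is an order-embedding, while surjectivity is witnessed by the canonical extensions $f\mapsto\sup_Z\cdot f_!$ and $f\mapsto\inf_Z\cdot f_{\ie}$.

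Specialising to $Z:=\PX$, which is separated and complete by Lemma \ref{Yoneda}\ref{PX_sup}, immediately produces
\[
\QSup(\PY,\PX)\cong\QDist(X,Y)\cong\QInf(\PdY,\PX).
\]
The first isomorphism is described explicitly by $\phi\mapsto\phi^{\od}$: indeed $\phi^{\od}$ is the left adjoint in the Kan adjunction \eqref{Kan_def}, hence sup-preserving, and Lemma \ref{dist_Yoneda}\ref{dist_Yoneda:olphi} gives $\phi^{\od}\cdot\sy_Y=\olphi$, identifying $\phi^{\od}$ as the required lift. The second isomorphism sends $\phi$ to the unique inf-preserving extension of $\olphi:Y\to\PX$ along $\syd_Y$.

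For the remaining two isomorphisms, specialising the universal property to $Z:=\PdY$ gives $\QSup(\PX,\PdY)\cong\QCat(X,\PdY)$ (order-preserving), which then composes with the \emph{order-reversing} bijection $\QCat(X,\PdY)\cong(\QDist(X,Y))^{\op}$ given by $\ophi\leftrightarrow\phi$; dualising 2-cells on the left yields $(\QSup)^{\co}(\PX,\PdY)\cong\QDist(X,Y)$. Finally, send $\phi$ to $\phi_{\od}:\PX\to\PY$: this is the right adjoint in the Kan adjunction, hence inf-preserving, and the chain $\phi\preceq\psi\iff\phi^{\od}\leq\psi^{\od}\iff\phi_{\od}\geq\psi_{\od}$ (from Lemma \ref{functor_order}\ref{phi_leq_psi} together with the contravariance of passage to adjoints) shows that this is an order-reversing bijection onto $\QInf(\PX,\PY)$, equivalently an order-isomorphism onto $(\QInf)^{\co}(\PX,\PY)$; bijectivity is ensured by the fact that $\phi_{\od}$ and $\phi^{\od}$ mutually determine one another through the Kan adjunction.

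Naturality in $X,Y$ of all four isomorphisms is inherited from the 2-functoriality of the constructions involved. The main challenge is not any single calculation but the careful tracking of order directions: the underlying order on $\PdY$ reverses the local order of $\QDist$, the Kan adjunction swaps left and right adjoints, and the bijection $\phi\mapsto\ophi$ is itself order-reversing in $\phi$. It is precisely the collation of these sign changes that produces the alternating plain and $(-)^{\co}$ pattern on the right-hand side of the display.
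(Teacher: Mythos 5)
Your argument is correct, but it reaches the conclusion by a partly different route than the paper. The paper's own proof simply exhibits the Isbell adjunction $\uphi\dv\dphi$ (with $\uphi\si=\phi\lda\si$ and $\dphi\tau=\tau\rda\phi$) alongside the Kan adjunction $\phi^{\od}\dv\phi_{\od}$, asserts that these assignments realize the four order-isomorphisms, and defers the verification to \cite[Theorems 4.4 \& 5.7]{Shen2013a}. You instead derive everything from the universal property of $\PY$ and $\PdY$ as free (separated) cocompletion and completion: restriction along $\sy_Y$, resp.\ $\syd_Y$, gives $\QSup(\PY,Z)\cong\QCat(Y,Z)\cong\QInf(\PdY,Z)$ for separated complete $Z$, justified via Lemmas \ref{la_preserve_sup}, \ref{fyx_leq_gyx} and \ref{Yoneda}\ref{PX_sup}, and you then track the order reversal coming from $\phi\mapsto\ophi$ and from passage to adjoints. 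This buys a self-contained argument inside the paper's Section 2 toolkit and a uniform explanation of the alternating $(-)^{\co}$ pattern; what it loses is the explicit description of the first two correspondences (your ``unique extensions'' are exactly $\uphi$ and $\dphi$, whose concrete formulas the paper actually uses later, e.g.\ in Theorems \ref{Pd_Alg} and \ref{PdP_Alg}). One step you should spell out: surjectivity of $\phi\mapsto\phi_{\od}$ onto $\QInf(\PX,\PY)$. Given an inf-preserving $g:\PX\to\PY$, it is a right adjoint by Lemma \ref{la_preserve_sup}, its left adjoint lies in $\QSup(\PY,\PX)$ and hence equals $\phi^{\od}$ for a unique $\phi$ by your third isomorphism, so $g=\phi_{\od}$ by uniqueness of adjoints between separated $\CQ$-categories; the remark that $\phi^{\od}$ and $\phi_{\od}$ ``mutually determine one another'' by itself only yields injectivity.
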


\begin{proof}
Each $\CQ$-distributor $\phi:X\oto Y$ induces the Isbell adjunction $\uphi\dv\dphi:\PdY\to\PX$ \cite{Shen2013a} with
$$\uphi\si=\phi\lda\si\quad\text{and}\quad\dphi\tau=\tau\rda\phi$$
for all $\si\in\PX$, $\tau\in\PdY$. It is straightforward to check that
\begin{center}
\renewcommand\arraystretch{1.5}
\setlength{\tabcolsep}{1pt}
\begin{tabular}{ccccc}
$\QDist(X,Y)$ & $\cong$ & $(\QSup)^{\co}(\PX,\PdY)$ & $\cong$ & $\QInf(\PdY,\PX)$ \\
$\phi$ & $\stackrel{\thicksim}{\longleftrightarrow}$ & $\uphi$ & $\stackrel{\thicksim}{\longleftrightarrow}$ & $\dphi$\\
 & $\cong$ & $\QSup(\PY,\PX)$ & $\cong$ & $(\QInf)^{\co}(\PX,\PY)$\\
 & $\stackrel{\thicksim}{\longleftrightarrow}$ & $\phi^{\od}$ & $\stackrel{\thicksim}{\longleftrightarrow}$ & $\phi_{\od}$
\end{tabular}
\end{center}
gives the required isomorphisms. We refer to \cite[Theorems 4.4 \& 5.7]{Shen2013a} for details.
\end{proof}

\section{The non-discrete version of lax distributive laws and their lax algebras} \label{Lax_Dist_Laws}

In this section we establish the non-discrete version of the lax distributive laws considered in \cite{Tholen2016}. An equivalent framework in terms of lax extensions of 2-monads on $\QCat$ to $\QDist$ is presented below in Section \ref{Dist_Law_vs_Extension}.

For a 2-monad $\bbT=(T,m,e)$ on $\QCat$, a \emph{lax distributive law} $\lam:T\sP\to\sP T$ is given by a family
$$(\lam_X:T\PX\to\sP TX)_{X\in\ob(\QCat)}$$
of $\CQ$-functors satisfying the following inequalities for all $\CQ$-functors $f:X\to Y$:
\begin{enumerate}[label=(\alph*),leftmargin=0.7em,labelsep*=-0.3em]
\item \label{a}
\begin{tabular}{p{6.2cm}p{5cm}p{3.8cm}}
$\hskip 1cm\bfig
\square<600,300>[T\PX`T\PY`\sP TX`\sP TY;T(f_!)`\lam_X`\lam_Y`(Tf)_!]
\place(300,150)[\leq]
\efig$ & $(Tf)_!\cdot\lam_X\leq\lam_Y\cdot T(f_!)$ & (lax naturality of $\lam$);
\end{tabular}
\item \label{b}
\begin{tabular}{p{6.2cm}p{5cm}p{3.8cm}}
$\hskip 1cm\bfig
\Atriangle<300,300>[TX`T\PX`\sP TX;T\sy_X`\sy_{TX}`\lam_X]
\place(300,120)[\geq]
\efig$ & $\sy_{TX}\leq\lam_X\cdot T\sy_X$ & (lax $\bbP$-unit law);
\end{tabular}
\item \label{c}
\begin{tabular}{p{6.2cm}p{5cm}p{3.8cm}}
$\bfig
\square/`->`->`->/<1100,300>[T\PPX`\PP T X`T\PX`\sP T X;`T\sfs_X`\sfs_{T X}`\lam_X]
\morphism(0,300)<550,0>[T\PPX`\sP T\PX;\lam_{\PX}]
\morphism(550,300)<550,0>[\sP T\PX`\PP T X;(\lam_X)_!]
\place(550,130)[\geq]
\efig$ & $\sfs_{TX}\cdot(\lam_X)_!\cdot\lam_{\PX}\leq\lam_X\cdot T\sfs_X$ & (lax $\bbP$-mult. law);
\end{tabular}
\item \label{d}
\begin{tabular}{p{6.2cm}p{5cm}p{3.8cm}}
$\hskip 1cm\bfig
\Atriangle<300,300>[\PX`T\PX`\sP TX;e_{\PX}`(e_X)_!`\lam_X]
\place(300,120)[\geq]
\efig$ & $(e_X)_!\leq\lam_X\cdot e_{\PX}$ & (lax $\bbT$-unit law);
\end{tabular}
\item \label{e}
\begin{tabular}{p{6.2cm}p{5cm}p{3.8cm}}
$\bfig
\square/`->`->`->/<1100,300>[TT\PX`\sP TT X`T\PX`\sP T X;`m_{\PX}`(m_X)_!`\lam_X]
\morphism(0,300)<550,0>[TT\PX`T\sP T X;T\lam_X]
\morphism(550,300)<550,0>[T\sP T X`\sP TTX;\lam_{T X}]
\place(550,130)[\geq]
\efig$ & $(m_X)_!\cdot\lam_{TX}\cdot T\lam_X\leq\lam_X\cdot m_{\PX}$ & (lax $\bbT$-mult. law).
\end{tabular}
\end{enumerate}

Each of these laws is said to hold \emph{strictly} (at $f$ or $X$) if the respective inequality sign may be replaced by an equality sign; for a \emph{strict distributive law}, all lax laws must hold strictly everywhere. For simplicity, in what follows, we refer to a lax distributive law $\lam:T\sP\to T\sP$ just as a \emph{distributive law}
; we also say that $\bbT$ \emph{distributes} over $\bbP$ by $\lam$ in this case, adding \emph{strictly} when $\lam$ is strict.

Note that in the discrete case (see \cite{Tholen2016}), a distributive law $\lam$ of a monad $\bbT=(T,m,e)$ on $\Set/\CQ_0$ over the discrete presheaf monad $\bbP$ on $\Set/\CQ_0$ is usually required to be monotone, \emph{i.e.},
$$f\leq g\Lra\lam_X\cdot Tf\leq\lam_X\cdot Tg$$
for all $\CQ$-functors $f,g:Y\to\PX$. This property comes for free in the current non-discrete case since the 2-functor $T$ of the 2-monad $\bbT=(T,m,e)$ will respect the order.


\begin{defn} \label{lam_alg_def}
For a distributive law $\lam:T\sP\to\sP T$, a \emph{lax $\lam$-algebra} $(X,p)$ over $\CQ$ is a $\CQ$-category $X$ with a $\CQ$-functor $p:TX\to\PX$ satisfying
\begin{enumerate}[label=(\alph*),start=6,leftmargin=1em,labelsep*=-0.3em]
\item \label{f}
\begin{tabular}{p{6.2cm}p{5cm}p{3.8cm}}
\hskip 1.2cm $\bfig
\Atriangle<300,300>[X`TX`\PX;e_X`\sy_X`p]
\place(300,120)[\geq]
\efig$ & $\sy_X\leq p\cdot e_X$ & (lax unit law);
\end{tabular}
\item \label{g}
\begin{tabular}{p{6.2cm}p{5cm}p{3.8cm}}
$\bfig
\square/`->`->`->/<1200,300>[TTX`\PPX`TX`\PX;`m_X`\sfs_X`p]
\morphism(0,300)<400,0>[TTX`T\PX;Tp]
\morphism(400,300)<400,0>[T\PX`\sP TX;\lam_X]
\morphism(800,300)<400,0>[\sP TX`\PPX;p_!]
\place(600,130)[\geq]
\efig$ & $\sfs_X\cdot p_!\cdot\lam_X\cdot Tp\leq p\cdot m_X$ & (lax mult. law).
\end{tabular}
\end{enumerate}

A \emph{lax $\lam$-homomorphism} $f:(X,p)\to(Y,q)$ of lax $\lam$-algebras is a $\CQ$-functor $f:X\to Y$ which satisfies
\begin{enumerate}[label=(\alph*),start=8,leftmargin=1em,labelsep*=-0.3em]
\item \label{h}
\begin{tabular}{p{6.2cm}p{3.2cm}p{4.8cm}}
\hskip 0.8cm $\bfig
\square<800,300>[TX`TY`\PX`\PY;Tf`p`q`f_!]
\place(400,130)[\leq]
\efig$ & $f_!\cdot p\leq q\cdot Tf$ & (lax homomorphism law).
\end{tabular}
\end{enumerate}

The resulting 2-category is denoted by $\lamQAlg$, with the local order inherited from $\QCat$.
\end{defn}

\begin{prop}
$\lamQAlg$ is topological over $\QCat$ and, hence, totally complete and totally cocomplete.
\end{prop}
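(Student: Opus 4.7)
The plan is to show that the forgetful 2-functor $U:\lamQAlg\to\QCat$ is topological by providing an initial lift for every $U$-structured source. Total (co)completeness of $\lamQAlg$ will then follow from the standard fact that topological functors lift both forms of totality from the base, combined with the total completeness and total cocompleteness of $\QCat$.

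Given a $U$-structured source $(f_i:X\to(Y_i,q_i))_{i\in I}$, I would take as the initial lift the structure
\[
p:=\bw_{i\in I}(f_i)^{!}\cdot q_i\cdot Tf_i:TX\to\PX,
\]
computed pointwise. Since $\PX$ is separated and complete by Lemma \ref{Yoneda}\ref{PX_sup}, and all values $((f_i)^{!}q_iTf_i)(x)$ share the common array $|x|$, the pointwise meet exists in $\PX$; a brief verification using the definition of $\CQ$-functoriality shows that $p$ itself is a $\CQ$-functor.

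The verification that $(X,p)$ is a lax $\lam$-algebra proceeds as follows. The lax unit law $\sy_X\leq p\cdot e_X$ reduces, via the naturality of $e$ (so $Tf_i\cdot e_X=e_{Y_i}\cdot f_i$), the identity $(f_i)_!\cdot\sy_X=\sy_{Y_i}\cdot f_i$ from Lemma \ref{functor_Yoneda}\ref{functor_Yoneda:y_nat}, and the unit inequality $1\leq(f_i)^{!}(f_i)_!$, to the lax unit law for each $(Y_i,q_i)$ pulled back along $(f_i)^{!}$. For the lax multiplication law, the definition of $p$ and the adjunction $(f_i)_!\dv(f_i)^{!}$ reduce matters to proving
\[
(f_i)_!\cdot\sfs_X\cdot p_!\cdot\lam_X\cdot Tp\leq q_i\cdot Tf_i\cdot m_X
\]
for every $i$. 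This chain is assembled by successively invoking the identity $(f_i)_!\cdot\sfs_X=\sfs_{Y_i}\cdot(f_i)_{!!}$ of Lemma \ref{functor_Yoneda}\ref{functor_Yoneda:yf} together with the functoriality of $(-)_!$, the counit estimate $(f_i)_!\cdot p\leq q_i\cdot Tf_i$ (immediate from the definition of $p$), the lax naturality of $\lam$, the order-preservation of $T$ reapplied to the same counit estimate, the lax multiplication law for $(Y_i,q_i)$, and finally the naturality of $m$.

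Initiality is then almost formal: each $f_i:(X,p)\to(Y_i,q_i)$ is a lax homomorphism by the counit estimate recalled above, and if $g:Z\to X$ in $\QCat$ for some lax $\lam$-algebra $(Z,r)$ is such that every composite $f_ig$ is a lax homomorphism, then $g_!\cdot r\leq(f_i)^{!}\cdot q_i\cdot Tf_i\cdot Tg$ for each $i$, so taking the meet over $i$ yields $g_!\cdot r\leq p\cdot Tg$, exhibiting $g$ as a lax homomorphism. The main obstacle in the argument is the marshalling of the string of inequalities for the lax multiplication law of $p$; once the initial candidate is correctly identified, all remaining steps are essentially formal consequences of the rules collected in Section \ref{QCat}.
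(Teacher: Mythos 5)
Your proposal is correct and takes essentially the same approach as the paper: the paper exhibits exactly the same initial structure $p=\bw_{j}(f_j)^{!}\cdot q_j\cdot Tf_j$ and immediately cites the literature for topologicity and for the lifting of total (co)completeness, leaving to the reader the verification of the lax unit, lax multiplication, and initiality conditions that you spell out. Your chain of inequalities for the lax multiplication law and the adjunction argument for initiality are the intended (and correct) details.
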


\begin{proof}
For any family of $\lam$-algebras $(Y_j,q_j)$ and $\CQ$-functors $f_j:X\to Y_j$ $(j\in J)$,
$$p:=\bw_{j\in J}(f_j)^!\cdot q_j\cdot Tf_j$$
gives the initial structure on $X$ with respect to the forgetful functor $\lamQAlg\to\QCat$, and thus establishes the topologicity of $\lamQAlg$ over $\QCat$ (see \cite{Adamek1990}). The total completeness and total cocompleteness of $\lamQAlg$ then follow from the respective properties of $\QCat$ (see \cite[Theorem 2.7]{Shen2015}).
\end{proof}

\section{Flat distributive laws of 2-monads on $\QCat$ over $\bbP$} \label{Flat_Dist_Laws}

Given a 2-monad $\bbT=(T,m,e)$ on $\QCat$, a distributive law $\lam$ of $\bbT$ over $\bbP$ is called \emph{flat} (or \emph{normal}) if it satisfies the $\bbP$-unit law \ref{b} strictly; that is, if
$$\sy_{TX}=\lam_X\cdot T\sy_X$$
for all $X\in\ob(\QCat)$. In fact, although there may be several distributive laws of $\bbT$ over $\bbP$, only one of them may be flat:

\begin{prop} \label{flat_Tyx}
A flat distributive law $\lam:T\sP\to\sP T$ must satisfy
$$\lam_X=\ola{(T\sy_X)_*}:T\PX\to\sP TX.$$
\end{prop}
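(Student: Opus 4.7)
The proof strategy is to establish the required identity by proving the two opposite inequalities $\lam_X \leq \ola{(T\sy_X)_*}$ and $\lam_X \geq \ola{(T\sy_X)_*}$ separately. Flatness alone pins down only $\lam_X \cdot T\sy_X = \sy_{TX}$, so each direction will have to combine flatness with a different additional ingredient; recognizing which ingredient to use on each side is the main conceptual obstacle.

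For the upper bound, I will substitute the Yoneda embedding $\sy_X : X \to \PX$ into the lax naturality axiom \ref{a} of $\lam$, yielding $(T\sy_X)_! \cdot \lam_X \leq \lam_{\PX} \cdot T((\sy_X)_!)$. Since $\bbP$ is lax idempotent, $(\sy_X)_! \leq \sy_{\PX}$, and 2-functoriality of $T$ together with flatness at $\PX$ then gives $\lam_{\PX} \cdot T((\sy_X)_!) \leq \lam_{\PX} \cdot T\sy_{\PX} = \sy_{T\PX}$. Transposing the resulting inequality $(T\sy_X)_! \cdot \lam_X \leq \sy_{T\PX}$ across the adjunction $(T\sy_X)_! \dv (T\sy_X)^!$ in $\QCat$ and invoking the identity $\ola{f_*} = f^! \cdot \sy_Y$ from Lemma \ref{functor_Yoneda}\ref{functor_Yoneda:yd_nat} produces $\lam_X \leq (T\sy_X)^! \cdot \sy_{T\PX} = \ola{(T\sy_X)_*}$.

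For the reverse inequality, I will switch to the $\QDist$ side via the bijection exhibited in \eqref{QCat_QDist_adjunction}, writing $\lam_X = \ola{\chi_X}$ for the unique $\CQ$-distributor $\chi_X : TX \oto T\PX$ with $\chi_X(x,z) = \lam_X(z)(x)$. Flatness at $X$ then reads pointwise as $\chi_X(x, T\sy_X(x')) = 1_{TX}^*(x,x')$. The key step is to feed this into the distributor compatibility $1_{T\PX}^* \circ \chi_X \circ 1_{TX}^* \preceq \chi_X$: restricting the defining supremum to the single term with $z' = T\sy_X(x)$ and $x' = x$, and using $1_{TX}^*(x,x) \succeq 1_{|x|}$, yields the pointwise bound $\chi_X(x,z) \succeq 1_{T\PX}^*(T\sy_X(x), z) = (T\sy_X)_*(x,z)$. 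Hence $\chi_X \succeq (T\sy_X)_*$ in $\QDist$, and Lemma \ref{functor_order}\ref{phi_leq_psi} converts this into $\lam_X \geq \ola{(T\sy_X)_*}$, completing the argument. The delicate point throughout will be keeping the directions of the various orders (in $\QCat$, $\QDist$, and the presheaf $\CQ$-categories) consistent.
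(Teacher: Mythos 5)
Your argument is correct. The upper bound $\lam_X\leq\ola{(T\sy_X)_*}$ is obtained exactly as in the paper: lax naturality \ref{a} at $\sy_X$, lax idempotency of $\bbP$, strict \ref{b} at $\PX$, and the adjunction $(T\sy_X)_!\dv(T\sy_X)^!$ together with $\ola{f_*}=f^!\cdot\sy_{T\PX}$. For the lower bound, however, you take a genuinely different route. The paper derives $\lam_X\geq\ola{(T\sy_X)_*}$ by an abstract calculation that invokes the lax $\bbP$-multiplication law \ref{c} (together with lax \ref{b} and the adjunction $(\lam_X)_!\dv\lam_X^!$), whereas you pass to the transposed distributor $\chi_X:TX\oto T\PX$ and argue pointwise: the distributor compatibility $1_{T\PX}^*\circ\chi_X\circ 1_{TX}^*\preceq\chi_X$ (which holds automatically because $\lam_X$ is a $\CQ$-functor, so $\chi_X$ really is a distributor), restricted to the term $x'=x$, $z'=T\sy_X(x)$, combined with $\chi_X(x,T\sy_X(x))\succeq 1_{TX}^*(x,x)\succeq 1_{|x|}$, yields $\chi_X\succeq(T\sy_X)_*$ and hence $\lam_X\geq\ola{(T\sy_X)_*}$ by Lemma \ref{functor_order}\ref{phi_leq_psi}. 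Note that for this step you only need the \emph{lax} inequality in \ref{b}, not the strict equality you quote, so your argument actually shows that the lower bound holds under weaker hypotheses than the paper's (which assumes \ref{b} and \ref{c}); the trade-off is that your proof is elementwise and so does not transfer as directly to settings where one wants to stay at the level of the abstract 2-categorical calculus, which is the style the paper maintains throughout (and reuses in Theorem \ref{Yoneda_flat_dist_extension}).
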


\begin{proof}
Indeed, $\lam_X\geq\ola{(T\sy_X)_*}$ holds for any $\lam:T\sP\to\sP T$ satisfying the lax laws \ref{b} and \ref{c}, since
\begin{align*}
\lam_X&=\lam_X\cdot T\ola{(\sy_X)_*}&(\text{Lemma \ref{dist_Yoneda}\ref{dist_Yoneda:id}})\\
&=\lam_X\cdot T\sy_X^!\cdot T\sy_{\PX}&(\text{Lemma \ref{functor_Yoneda}\ref{functor_Yoneda:yd_nat}})\\
&\geq\sy_{TX}^!\cdot(\lam_X)_!\cdot\lam_{\PX}\cdot T\sy_{\PX}&(\lam\ \text{satisfies \ref{c}})\\
&\geq(T\sy_X)^!\cdot\lam_X^!\cdot(\lam_X)_!\cdot\sy_{T\PX}&(\lam\ \text{satisfies \ref{b}})\\
&\geq(T\sy_X)^!\cdot\sy_{T\PX}&((\lam_X)_!\dv\lam_X^!)\\
&=\ola{(T\sy_X)_*}.&(\text{Lemma \ref{functor_Yoneda}\ref{functor_Yoneda:yd_nat}})
\end{align*}
When $\lam$ also satisfies the laws \ref{a} laxly and \ref{b} strictly, one has
\begin{align*}
\lam_X&\leq(T\sy_X)^!\cdot(T\sy_X)_!\cdot\lam_X&((T\sy_X)_!\dv(T\sy_X)^!)\\
&\leq(T\sy_X)^!\cdot\lam_{\PX}\cdot T(\sy_X)_!&(\lam\ \text{satisfies \ref{a}})\\
&\leq(T\sy_X)^!\cdot\lam_{\PX}\cdot T\sy_{\PX}&(\bbP\ \text{is lax idempotent})\\
&=(T\sy_X)^!\cdot\sy_{T\PX}&(\lam\ \text{satisfies \ref{b} strictly})\\
&=\ola{(T\sy_X)_*}.&(\text{Lemma \ref{functor_Yoneda}\ref{functor_Yoneda:yd_nat}})
\end{align*}
\end{proof}

The following Theorem provides the crucial tool for establishing the flat distributive laws over $\bbP$ presented in Sections \ref{Law_P}--\ref{Law_PdP}. In fact, it just paraphrases a lax extension result of \cite{Lai2016a} for 2-monads on $\QCat$  to $\QDist$, which we formulate explicitly below as Corollary \ref{Yoneda_flat_dist_extension} and which, in turn, builds on a lax extension result for endofunctors of $\VCat$ (where $\sV$ is a quantale) used in \cite{Akhvlediani2010}.

\begin{thm} \label{Yoneda_flat_dist_law}
Let $\bbT=(T,m,e)$ be a 2-monad on $\QCat$ such that $T\sy_X:TX\to T\PX$ is fully faithful for all $\CQ$-categories $X$. Then
$$\lam_X=\ola{(T\sy_X)_*}:T\PX\to\sP TX$$
defines a flat distributive law of $\bbT$ over $\bbP$, and it is the only one.
\end{thm}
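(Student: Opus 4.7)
Uniqueness is immediate from Proposition \ref{flat_Tyx}: any flat distributive law must coincide with $\ola{(T\sy_X)_*}$. For existence, I would set $\lam_X := \ola{(T\sy_X)_*}$, which by Lemma \ref{functor_Yoneda}\ref{functor_Yoneda:yd_nat} equals $(T\sy_X)^!\cdot\sy_{T\PX}$. The hypothesis that $T\sy_X$ is fully faithful, via Lemma \ref{fully_faithful_presheaf}\ref{fully_faithful_presheaf:ff}, delivers the crucial cancellation $(T\sy_X)^!\cdot(T\sy_X)_! = 1_{T\PX}$; together with the fact that $\lam_X$ is itself a right adjoint (a composite of right adjoints), this is the one tool that powers all the remaining verifications.

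The $\bbP$-unit law \ref{b} follows strictly in one line: naturality of $\sy$ (Lemma \ref{functor_Yoneda}\ref{functor_Yoneda:y_nat}) gives $\sy_{T\PX}\cdot T\sy_X = (T\sy_X)_!\cdot\sy_{TX}$, and so $\lam_X\cdot T\sy_X = (T\sy_X)^!\cdot(T\sy_X)_!\cdot\sy_{TX} = \sy_{TX}$. For the lax naturality \ref{a}, applying $T$ to $\sy_Y\cdot f = f_!\cdot\sy_X$ yields $T\sy_Y\cdot Tf = T(f_!)\cdot T\sy_X$; taking $(\text{--})_!$ gives $(T\sy_Y)_!\cdot(Tf)_! = (T(f_!))_!\cdot(T\sy_X)_!$, and transposing through $(T\sy_Y)_!\dv(T\sy_Y)^!$ combined with the counit $(T\sy_X)_!\cdot(T\sy_X)^!\leq 1$ produces $(Tf)_!\cdot(T\sy_X)^!\leq(T\sy_Y)^!\cdot(T(f_!))_!$, whence post-composing with $\sy_{T\PX}$ and using naturality of $\sy$ once more yields \ref{a}. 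For the lax $\bbT$-unit law \ref{d}, since $(e_X)_!$ is a left adjoint, Lemma \ref{fyx_leq_gyx} reduces the job to verifying on $\sy_X$: naturality of $e$ gives $e_{\PX}\cdot\sy_X = T\sy_X\cdot e_X$, and then the strict \ref{b} together with naturality of $\sy$ shows that both $(e_X)_!\cdot\sy_X$ and $\lam_X\cdot e_{\PX}\cdot\sy_X$ agree with $\sy_{TX}\cdot e_X$.

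The $\bbP$-multiplication law \ref{c} and $\bbT$-multiplication law \ref{e} are established by analogous substitute-and-chase arguments. For each, one expands $\lam_X = (T\sy_X)^!\cdot\sy_{T\PX}$ on both sides, repeatedly invokes the naturality of $\sy$, of $\sfs = \sy^!$ (equation \eqref{s_def}), and of $m$, and concludes via the cancellation $(T\sy_X)^!\cdot(T\sy_X)_! = 1$ in combination with the lax idempotence of $\bbP$ (giving $(\sy_X)_!\leq\sy_{\PX}$, applied under $T$ by 2-functoriality). The main obstacle will be \ref{e}: it has two layers of $T$ and braids together the naturality squares of $m$ with those of the presheaf monad, so the bookkeeping is non-trivial, and the trick is to first post-compose with $T\sy_{\PX}$ to reduce complexity via the cancellation and the identity $\sfs_X\cdot\sy_{\PX}=1_{\PX}$, then use that $\lam_X$ is a right adjoint to transpose judiciously.

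A more structured alternative --- paraphrasing the argument of \cite{Lai2016a} --- would be to first define a flat lax extension $\hat{T}$ of $T$ to $\QDist$ by $\hat{T}\phi := (T\olphi)^*\circ(T\sy_X)_*$ for every $\CQ$-distributor $\phi:X\oto Y$, verify directly that $\hat{T}$ is a flat lax extension of $\bbT$ (where the fully faithfulness of $T\sy_X$ enters essentially in the lax composition $\hat{T}\psi\circ\hat{T}\phi\preceq\hat{T}(\psi\circ\phi)$), and finally invoke the bijective correspondence between flat lax extensions and flat distributive laws formalized in Corollary \ref{Yoneda_flat_dist_extension}. Using $\overline{(\sy_X)_*} = 1_{\PX}$ (Lemma \ref{dist_Yoneda}\ref{dist_Yoneda:id}), one checks immediately that the resulting law coincides with our formula, modularizing the $\bbT$-multiplication verification into the single lax-composition check for $\hat{T}$.
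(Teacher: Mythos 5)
Your overall strategy is the paper's: uniqueness is delegated to Proposition \ref{flat_Tyx}, and existence is a direct verification of \ref{a}--\ref{e} for $\lam_X=\ola{(T\sy_X)_*}=(T\sy_X)^!\cdot\sy_{T\PX}$. Your treatments of \ref{b}, \ref{a} and \ref{d} are correct (the reduction of \ref{d} to its restriction along $\sy_X$ via Lemma \ref{fyx_leq_gyx} is a pleasant variant of the paper's chain, which instead uses lax idempotence of $\bbP$ and does not need strict \ref{b}). The problem lies in what you leave as a sketch. Your stated ``one tool that powers all the remaining verifications'' --- that $\lam_X$ is a right adjoint because it is ``a composite of right adjoints'' --- is false in general: $(T\sy_X)^!$ is indeed right adjoint to $(T\sy_X)_!$, but $\sy_{T\PX}$ is a right adjoint only when $T\PX$ is complete (one needs $\sup_{T\PX}\dv\sy_{T\PX}$), which holds for the four concrete monads of the paper but is not part of the hypotheses here. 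Since your plan for \ref{e} explicitly relies on ``using that $\lam_X$ is a right adjoint to transpose judiciously,'' that step would fail as described. The paper's actual route for \ref{e} uses none of this: it inserts the unit of $(T\sy_X)_!\dv(T\sy_X)^!$, rewrites via naturality of $m$, then applies the \emph{already established} lax naturality \ref{a} twice (once directly, once under $T$), invokes lax idempotence of $\bbP$ under $TT$, and collapses the result with two applications of strict \ref{b} and naturality of $\sy$. Similarly, your sketch of \ref{c} misses its actual mechanism: \ref{c} holds \emph{strictly} for any 2-monad, by the one-line computation $\ola{(T\sy_{\PX})_*\circ(T\sy_X)_*}=\ola{(T(\sy_X)_!)_*\circ(T\sy_X)_*}=\ola{(T(\sy_X)^!)^*\circ(T\sy_X)_*}=\lam_X\cdot T\sfs_X$, which only uses that $T$ preserves the adjunction $(\sy_X)_!\dv(\sy_X)^!$; no cancellation or lax idempotence is involved.

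Two smaller points. First, in the alternative route you cite ``Corollary \ref{Yoneda_flat_dist_extension}'' for the correspondence between flat lax extensions and flat distributive laws; that result is Theorem \ref{Yoneda_flat_dist_extension}, whose proof in the paper is extracted from the very theorem you are proving, so invoking it here would be circular. The non-circular references are Proposition \ref{lax_ext_monad} and Corollary \ref{flat_lax_ext_monad}, which are independent of Theorem \ref{Yoneda_flat_dist_law}; with those, the lax-extension detour is a legitimate (and essentially the \cite{Lai2016a}) alternative. Second, in that alternative the full fidelity of $T\sy_X$ is not where you place it: the lax composition law $\hT\psi\circ\hT\phi\preceq\hT(\psi\circ\phi)$ holds for any 2-functor $T$ (it comes from \ref{a} and strict \ref{c}); full fidelity is needed for flatness of $\hT$ and for the monad compatibility \ref{five}, i.e., precisely for strict \ref{b} and for \ref{e}.
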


\begin{proof}
We verify that for any 2-monad $\bbT=(T,m,e)$, $\lam$ satisfies the lax laws \ref{a}--\ref{d}, with \ref{c} always holding strictly; meanwhile, \ref{b} holds strictly if, and only if, $T\sy_X$ is fully faithful, in which case $\lam$ also satisfies the lax law \ref{e}; this will complete the proof, by Proposition \ref{flat_Tyx}.

\ref{a} $(Tf)_!\cdot\lam_X\leq\lam_Y\cdot T(f_!)$. From the naturality of $\sy$ one has
$$(T(f_!))_*\circ(T\sy_X)_*=(T\sy_Y)_*\circ(Tf)_*,$$
and then the adjunction rules ensure that
$$(Tf)_!\cdot\lam_X=\ola{(T\sy_X)_*\circ(Tf)^*}\leq\ola{(T(f_!))^*\circ(T\sy_Y)_*}=\lam_Y\cdot T(f_!),$$
where the first and the last equalities follow from Lemma \ref{transpose}\ref{transpose:l}.

\ref{b} $\sy_{TX}\leq\lam_X\cdot T\sy_X$.
  The adjunction rules give $$\sy_{TX}=\ola{1_{TX}^*}\leq\ola{(T\sy_X)^*\circ(T\sy_X)_*}=\lam_X\cdot T\sy_X,$$ where ``$\leq$'' may be replaced by ``$=$'' if, and only if,
$T\sy_X$ is fully faithful, by Lemma \ref{fully_faithful_presheaf}\ref{fully_faithful_presheaf:ff}.

\ref{c} $\sfs_{TX}\cdot(\lam_X)_!\cdot\lam_{\PX}=\lam_X\cdot T\sfs_X$. Since the 2-functor $T$ preserves adjunctions in $\QCat$, one has
\begin{align*}
\sfs_{TX}\cdot(\lam_X)_!\cdot\lam_{\PX}&=\ola{(T\sy_{\PX})_*\circ(T\sy_X)_*}&(\text{Lemma \ref{transpose}\ref{transpose:l}})\\
&=\ola{(T(\sy_X)_!)_*\circ(T\sy_X)_*}&(\sy\ \text{is natural})\\
&=\ola{(T(\sy_X)^!)^*\circ(T\sy_X)_*}&(T(\sy_X)_!\dv T(\sy_X)^!)\\
&=\lam_X\cdot T\sfs_X.&(\text{Lemma \ref{transpose}\ref{transpose:l}})
\end{align*}

\ref{d} $(e_X)_!\leq\lam_X\cdot e_{\PX}$. The naturality of $e$ induces $T\sy_X\cdot e_X=e_{\PX}\cdot\sy_X$, and consequently
\begin{align*}
(e_X)_!&\leq(T\sy_X)^!\cdot(e_{\PX})_!\cdot(\sy_X)_!&((T\sy_X)_!\dv(T\sy_X)^!)\\
&\leq(T\sy_X)^!\cdot(e_{\PX})_!\cdot\sy_{\PX}&(\bbP\ \text{is lax idempotent})\\
&=(T\sy_X)^!\cdot\sy_{T\PX}\cdot e_{\PX}&(\sy\ \text{is natural})\\
&=\lam_X\cdot e_{\PX}. &(\text{Lemma \ref{functor_Yoneda}\ref{functor_Yoneda:yd_nat}})
\end{align*}

\ref{e} $(m_X)_!\cdot\lam_{TX}\cdot T\lam_X\leq\lam_X\cdot m_{\PX}$ if \ref{b} holds strictly. From the naturality of $m$ one has $T\sy_X\cdot m_X=m_{\PX}\cdot TT\sy_X$, and it follows that
\begin{align*}
(m_X)_!\cdot\lam_{TX}\cdot T\lam_X&\leq(T\sy_X)^!\cdot(m_{\PX})_!\cdot(TT\sy_X)_!\cdot\lam_{TX}\cdot T\lam_X&((T\sy_X)_!\dv(T\sy_X)^!)\\
&\leq(T\sy_X)^!\cdot(m_{\PX})_!\cdot\lam_{T\PX}\cdot T(T\sy_X)_!\cdot T\lam_X&(\lam\ \text{satisfies \ref{a}})\\
&\leq(T\sy_X)^!\cdot(m_{\PX})_!\cdot\lam_{T\PX}\cdot T\lam_{\PX}\cdot TT(\sy_X)_!&(\lam\ \text{satisfies \ref{a}})\\
&\leq(T\sy_X)^!\cdot(m_{\PX})_!\cdot\lam_{T\PX}\cdot T\lam_{\PX}\cdot TT\sy_{\PX}&(\bbP\ \text{is lax idempotent})\\
&=(T\sy_X)^!\cdot(m_{\PX})_!\cdot\lam_{T\PX}\cdot T\sy_{T\PX}&(\lam\ \text{satisfies \ref{b} strictly})\\
&=(T\sy_X)^!\cdot(m_{\PX})_!\cdot\sy_{TT\PX}&(\lam\ \text{satisfies \ref{b} strictly})\\
&=(T\sy_X)^!\cdot\sy_{T\PX}\cdot m_{\PX}&(\sy\ \text{is natural})\\
&=\lam_X\cdot m_{\PX}.&(\text{Lemma \ref{functor_Yoneda}\ref{functor_Yoneda:yd_nat}})
\end{align*}
\end{proof}

\section{The distributive laws of the (co)presheaf 2-monad} \label{Law_P}

As an immediate consequence of Lemma \ref{fully_faithful_presheaf} and Theorem \ref{Yoneda_flat_dist_law}, one sees that
$$\lam_X=\ola{(\sy_X)_{! *}}=((\sy_X)_!)^!\cdot\sy_{\PPX}=\sy_{\PX}\cdot\sy_X^!=\sy_{\PX}\cdot{\sup}_{\PX}:\PPX\to\PPX$$
defines a flat distributive law of the presheaf 2-monad $\bbP$ over itself:

\begin{prop} \label{P_dist}
The presheaf 2-monad $\bbP$ distributes flatly over itself by $\lam$ with
$$\lam_X=\sy_{\PX}\cdot{\sup}_{\PX}:\PPX\to\PPX.$$
\end{prop}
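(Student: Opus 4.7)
The plan is to apply Theorem \ref{Yoneda_flat_dist_law} to $\bbT=\bbP$, so the first task is to verify its hypothesis: the $\CQ$-functor $\sP\sy_X=(\sy_X)_!:\PX\to\PPX$ must be fully faithful for every $\CQ$-category $X$. This is immediate from what the paper has already assembled: the Yoneda Lemma (Lemma \ref{Yoneda}(1)) tells us that $\sy_X$ itself is fully faithful, and Lemma \ref{fully_faithful_presheaf}\ref{fully_faithful_presheaf:ff} then transfers full fidelity from a $\CQ$-functor $f$ to its induced left Kan extension $f_!$.

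Once this hypothesis is checked, Theorem \ref{Yoneda_flat_dist_law} immediately delivers a flat distributive law of $\bbP$ over itself, given by
\[
\lam_X=\ola{(\sP\sy_X)_*}=\ola{((\sy_X)_!)_*}:\PPX\to\PPX,
\]
and also asserts that this is the unique flat distributive law available. It remains only to rewrite the right-hand side in the stated form $\sy_{\PX}\cdot{\sup}_{\PX}$.

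The second task, therefore, is a short chain of rewrites using the lemmas in Section \ref{QCat}. Applying Lemma \ref{functor_Yoneda}\ref{functor_Yoneda:yd_nat} to $f=(\sy_X)_!$ converts the transpose into $\lam_X=((\sy_X)_!)^!\cdot\sy_{\PPX}$. Next, Lemma \ref{functor_Yoneda}\ref{functor_Yoneda:f_double} gives $((\sy_X)_!)^!=(\sy_X^!)_!$, and by the definition \eqref{s_def} of the presheaf monad multiplication we have $\sy_X^!=\sfs_X=\sup_{\PX}$, so $\lam_X=(\sup_{\PX})_!\cdot\sy_{\PPX}$. Finally, the naturality of $\sy$ (applied to the $\CQ$-functor $\sup_{\PX}:\PPX\to\PX$) yields $(\sup_{\PX})_!\cdot\sy_{\PPX}=\sy_{\PX}\cdot\sup_{\PX}$, completing the identification.

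The proof is essentially a bookkeeping exercise once Theorem \ref{Yoneda_flat_dist_law} is available, and no genuine obstacle arises: the only content is the full fidelity of $(\sy_X)_!$, which is handed to us by the Yoneda Lemma together with Lemma \ref{fully_faithful_presheaf}\ref{fully_faithful_presheaf:ff}.
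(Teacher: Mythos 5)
Your proposal is correct and follows essentially the same route as the paper: both verify the hypothesis of Theorem \ref{Yoneda_flat_dist_law} via Lemma \ref{fully_faithful_presheaf}\ref{fully_faithful_presheaf:ff} applied to the fully faithful $\sy_X$, and then rewrite $\ola{((\sy_X)_!)_*}=((\sy_X)_!)^!\cdot\sy_{\PPX}=\sy_{\PX}\cdot\sy_X^!=\sy_{\PX}\cdot{\sup}_{\PX}$ using Lemma \ref{functor_Yoneda} and the naturality of $\sy$. Nothing is missing.
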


Now we describe the lax algebras for this distributive law. A \emph{$\CQ$-closure space} \cite{Shen2016b,Shen2013a} is a pair $(X,c)$ that consists of a $\CQ$-category $X$ and a \emph{$\CQ$-closure operation} $c$ on $\PX$; that is, a $\CQ$-functor $c:\PX\to\PX$ satisfying $1_{\PX}\leq c$ and $c\cdot c=c$. A \emph{continuous $\CQ$-functor} $f:(X,c)\to(Y,d)$ between $\CQ$-closure spaces is a $\CQ$-functor $f:X\to Y$ such that
$$f_!\cdot c\leq d\cdot f_!:\PX\to\PY.$$

$\CQ$-closure spaces and continuous $\CQ$-functors constitute the 2-category $\QCls$, with the local order inherited from $\QCat$. One sees quite easily that these are the lax $\lam$-algebras over $\CQ$:

\begin{thm} \label{P_Alg}
$\lamQAlg\cong\QCls$.
\end{thm}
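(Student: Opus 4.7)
The plan is to identify each of the three laws in Definition \ref{lam_alg_def} (as applied to the flat distributive law $\lam$ of Proposition \ref{P_dist}) with the axioms defining $\QCls$, so that the claimed isomorphism becomes tautological at the level of underlying data.

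I will first simplify the lax unit law \ref{f}. Since $\bbT=\bbP$ has unit $e=\sy$, this law reads $\sy_X\leq p\cdot\sy_X$. As $1_{\PX}$ is a (trivially) left adjoint in $\QCat$, Lemma \ref{fyx_leq_gyx} translates $1_{\PX}\cdot\sy_X\leq p\cdot\sy_X$ into the equivalent extensivity condition $1_{\PX}\leq p$.

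Next, I will handle the lax multiplication law \ref{g}. After substituting $\lam_X=\sy_{\PX}\cdot\sfs_X$ and exploiting the naturality identity $p_!\cdot\sy_{\PX}=\sy_{\PX}\cdot p$ together with the monad unit identity $\sfs_X\cdot\sy_{\PX}=1_{\PX}$, the composite $\sfs_X\cdot p_!\cdot\lam_X\cdot\sP p$ telescopes to $p\cdot\sfs_X\cdot p_!$, so \ref{g} becomes
$$p\cdot\sfs_X\cdot p_!\leq p\cdot\sfs_X.$$
I will then show that this is equivalent to $p^2\leq p$. In one direction, Lemma \ref{la_preserve_sup} applied to the $\CQ$-functor $p:\PX\to\PX$ between complete $\CQ$-categories gives $\sfs_X\cdot p_!\leq p\cdot\sfs_X$; whiskering on the left with $p$ and then invoking $p^2\leq p$ yields the displayed inequality. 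Conversely, I will evaluate the displayed inequality at $\sy_{\PX}(\si)\in\PPX$ for arbitrary $\si\in\PX$: using $p_!\cdot\sy_{\PX}=\sy_{\PX}\cdot p$ and $\sfs_X\cdot\sy_{\PX}=1_{\PX}$, the left-hand side reduces to $p^2(\si)$ and the right-hand side to $p(\si)$, delivering $p^2\leq p$ pointwise.

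Combining these facts, a lax $\lam$-algebra structure on a $\CQ$-category $X$ amounts to a $\CQ$-functor $p:\PX\to\PX$ with $1_{\PX}\leq p$ and $p^2\leq p$; the reverse inequality $p\leq p^2$ follows automatically from $1_{\PX}\leq p$ by whiskering on the left with $p$, so $p^2=p$ and $p$ is a $\CQ$-closure operation. The lax homomorphism law \ref{h} reads $f_!\cdot p\leq q\cdot f_!$ (since $Tf=\sP f=f_!$), which is exactly the continuity condition defining morphisms in $\QCls$. As both 2-categories inherit their local order from $\QCat$, the identity bijection on underlying data upgrades to an isomorphism of 2-categories. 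The only non-formal step is the telescoping of the lax multiplication law; once that is performed, the ``$\Rightarrow$'' direction is secured by a single test at a Yoneda image and the ``$\Leftarrow$'' direction by Lemma \ref{la_preserve_sup}.
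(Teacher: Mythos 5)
Your proof is correct and follows essentially the same route as the paper: condition \ref{f} is reduced to $1_{\PX}\leq p$ via Lemma \ref{fyx_leq_gyx}, condition \ref{g} is reduced to $p\cdot p\leq p$ using the naturality of $\sy$ together with $\sup_{\PX}\cdot\,\sy_{\PX}=1_{\PX}$, and \ref{h} is read off as continuity. The only cosmetic difference is that you telescope the left-hand side of \ref{g} and then test along $\sy_{\PX}$ (resp.\ invoke Lemma \ref{la_preserve_sup}), whereas the paper expands $c\cdot c$ and transposes once across the adjunction $\sup_{\PX}\dv\sy_{\PX}$ --- the same computation in mirror image.
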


\begin{proof}
For any $\CQ$-category $X$, we show that a $\CQ$-functor $c:\PX\to\PX$ gives a lax $\lam$-algebra structure on $X$ if, and only if, $(X,c)$ is a $\CQ$-closure space.

$c$ satisfies \ref{f} $\iff 1_{\PX}\leq c$: This is an immediate consequence of Lemma \ref{fyx_leq_gyx}.

$c$ satisfies \ref{g} $\iff c\cdot c\leq c$: Note that
\begin{align*}
c\cdot c&={\sup}_{\PX}\cdot\sy_{\PX}\cdot c\cdot{\sup}_{\PX}\cdot\sy_{\PX}\cdot c\\
&={\sup}_{\PX}\cdot c_!\cdot\sy_{\PX}\cdot{\sup}_{\PX}\cdot c_!\cdot\sy_{\PX}&(\sy\ \text{is natural})\\
&={\sup}_{\PX}\cdot c_!\cdot\lam_X\cdot c_!\cdot\sy_{\PX},
\end{align*}
and thus
\begin{align*}
c\cdot c\leq c&\iff{\sup}_{\PX}\cdot c_!\cdot\lam_X\cdot c_!\cdot\sy_{\PX}\leq c\\
&\iff{\sup}_{\PX}\cdot c_!\cdot\lam_X\cdot c_!\leq c\cdot{\sup}_{\PX},&({\sup}_{\PX}\dv\sy_{\PX})
\end{align*}
which is precisely the condition \ref{g}.

Therefore, the isomorphism between $\lamQAlg$ and $\QCls$ follows since a continuous $\CQ$-functor $f:(X,c)\to(Y,d)$ is exactly a $\CQ$-functor $f:X\to Y$ satisfying the condition \ref{h}.
\end{proof}

The distributive law of the copresheaf 2-monad $\bbP^{\dag}$ over $\bbP$ arising from Lemma \ref{fully_faithful_presheaf} and Theorem \ref{Yoneda_flat_dist_law} is even strict:

\begin{prop} \label{Pd_dist}
The copresheaf 2-monad $\bbP^{\dag}$ distributes strictly over $\bbP$ by $\lamd$ with
$$\lamd_X=\ola{(\sy_X)_{\ie *}}=((\sy_X)_{\ie})^!\cdot\sy_{\PdPX}:\PdPX\to\PPdX.$$
\end{prop}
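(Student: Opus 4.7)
The plan is to apply Theorem~\ref{Yoneda_flat_dist_law} to obtain a flat distributive law, then upgrade each of the remaining laws to strict by direct computation, exploiting Lemma~\ref{functor_Yoneda} and the separatedness of iterated (co)presheaf $\CQ$-categories.

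First, the stated formula $\ola{((\sy_X)_{\ie})_*}=((\sy_X)_{\ie})^!\cdot\sy_{\sPd\sP X}$ is simply Lemma~\ref{functor_Yoneda}\ref{functor_Yoneda:yd_nat} applied to the $\CQ$-functor $(\sy_X)_{\ie}\colon\sPd X\to\sPd\sP X$. Since $\sy_X$ is fully faithful (Yoneda Lemma, Lemma~\ref{Yoneda}(1)), Lemma~\ref{fully_faithful_presheaf}\ref{fully_faithful_presheaf:ff} propagates this to fully faithfulness of $(\sy_X)_{\ie}=\sPd\sy_X$; hence Theorem~\ref{Yoneda_flat_dist_law} delivers the flat distributive law, with laws \ref{b} and \ref{c} strict and \ref{a}, \ref{d}, \ref{e} a priori only lax.

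For strictness of \ref{a}, Lemma~\ref{transpose}\ref{transpose:l} rewrites both $(f_{\ie})_!\cdot\lamd_X$ and $\lamd_Y\cdot(f_!)_{\ie}$ as $\ola{\cdot}$ of explicit $\CQ$-distributors, reducing the law to the identity
\[((\sy_X)_{\ie})_*\circ(f_{\ie})^*=((f_!)_{\ie})^*\circ((\sy_Y)_{\ie})_*.\]
Because $f^{\ie}\dv f_{\ie}$ and $(f_!)^{\ie}\dv(f_!)_{\ie}$ in $\QCat$, Lemma~\ref{adjoint_functor} turns this into the $\CQ$-functor identity $(\sy_X)_{\ie}\cdot f^{\ie}=(f_!)^{\ie}\cdot(\sy_Y)_{\ie}$, which is precisely Lemma~\ref{functor_Yoneda}\ref{functor_Yoneda:yf}. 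For strictness of \ref{d}, I would compute directly:
\[\lamd_X\cdot\syd_{\sP X}=((\sy_X)_{\ie})^!\cdot\sy_{\sPd\sP X}\cdot\syd_{\sP X};\]
the naturality identity of Lemma~\ref{functor_Yoneda}\ref{functor_Yoneda:y_nat} gives $\sy_{\sPd\sP X}\cdot\syd_{\sP X}=(\syd_{\sP X})_!\cdot\sy_{\sP X}$, and Lemma~\ref{functor_Yoneda}\ref{functor_Yoneda:fyd} swaps $((\sy_X)_{\ie})^!\cdot(\syd_{\sP X})_!=(\syd_X)_!\cdot(\sy_X)^!$. Since $(\sy_X)^!=\sup_{\sP X}$ by \eqref{s_def} and $\sup_{\sP X}\cdot\sy_{\sP X}=1_{\sP X}$ strictly (Lemma~\ref{Yoneda}(2) gives $\cong$, which becomes $=$ by separatedness of $\sP X$, Lemma~\ref{Yoneda}\ref{PX_sup}), the chain collapses exactly to $(\syd_X)_!$.

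The main obstacle is strictness of \ref{e}. The plan is an analogous but substantially longer rewriting of both $(\ssd_X)_!\cdot\lamd_{\sPd X}\cdot(\lamd_X)_{\ie}$ and $\lamd_X\cdot\ssd_{\sP X}$ to the same expression, using naturality of $\sy$ and $\syd$, the adjoint-level identities of Lemma~\ref{functor_Yoneda}, and the characterizations $\sfs_X=(\sy_X)^!$, $\ssd_X=(\syd_X)^{\ie}$ from \eqref{s_def} and \eqref{sd_def}. No new principle beyond that already used for \ref{d} is required: separatedness of every iterated (co)presheaf $\CQ$-category involved (Lemma~\ref{Yoneda}\ref{PX_sup}) upgrades each intermediate $\cong$ to $=$; the challenge lies entirely in organizing the long sequence of rewrites cleanly.
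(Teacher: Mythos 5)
Your proposal is correct in its overall strategy and in fact supplies far more detail than the paper, whose entire proof reads: ``One needs to show that $\lamd$ satisfies the laws \ref{a}--\ref{e} strictly. We leave the lengthy but routine verification to the readers.'' Your reduction via Theorem~\ref{Yoneda_flat_dist_law} (full fidelity of $\sPd\sy_X=(\sy_X)_{\ie}$, hence \ref{b} and \ref{c} strict) and your strictness arguments for \ref{a} and \ref{d} are correct as written. The one incomplete piece is \ref{e}, and there your closing remark that ``no new principle beyond that already used for \ref{d} is required'' undersells what is needed: after transposing by Lemma~\ref{transpose}\ref{transpose:l}, strictness of \ref{e} amounts to the distributor identity
$$((\lamd_X)_{\ie})^*\circ((\sy_{\PdX})_{\ie})_*\circ(\ssd_X)^*=(\ssd_{\PX})^*\circ((\sy_X)_{\ie})_*,$$
a Beck--Chevalley-type equality $h_*\circ u^*=v^*\circ g_*$ around a commuting square of $\CQ$-functors, and naturality plus separatedness alone will not yield the inequality ``$\succeq$''. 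The missing move is the adjunction swap $f\dv g\Rightarrow f_*=g^*$ of Lemma~\ref{adjoint_functor} --- the same device you invoke for \ref{a}, not the separatedness trick of \ref{d}: since $\syd_{\PdX}\dv\ssd_X$ and $\syd_{\PdPX}\dv\ssd_{\PX}$ one may replace $(\ssd_X)^*$ by $(\syd_{\PdX})_*$ and $(\ssd_{\PX})^*$ by $(\syd_{\PdPX})_*$, and since $\lamd_X=((\sy_X)_{\ie})^!\cdot\sy_{\PdPX}$ is a right adjoint, with left adjoint $\mu_X={\sup}_{\PdPX}\cdot(\sy_X)_{\ie !}$, one may replace $((\lamd_X)_{\ie})^*$ by $((\mu_X)_{\ie})_*$. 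Both sides then become $(-)_*$ of composites of $\CQ$-functors and, using naturality of $\sy$ and $\syd$ together with ${\sup}_{\PdPX}\cdot\sy_{\PdPX}=1$, both collapse to $(\syd_{\PdPX}\cdot(\sy_X)_{\ie})_*$. With that supplement your proof is complete.
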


\begin{proof}
One needs to show that $\lamd$ satisfies the laws \ref{a}--\ref{e} strictly. We leave the lengthy but routine verification to the readers.
\end{proof}

\begin{rem}
Stubbe described a strict distributive law of $\bbP$ over $\bbP^{\dag}$ given by
\begin{equation} \label{Pd_dist_Stubbe}
\PPdX\to^{(\sy_X)_{\ie !}}\PPd\PX\to^{\sup_{\PdPX}}\PdPX
\end{equation}
in \cite{Stubbe2017}. In fact, the strict distributive law $\lamd_X:\PdPX\to\PPdX$ defined in Proposition \ref{Pd_dist} is precisely the right adjoint of \eqref{Pd_dist_Stubbe} in $\QCat$.
\end{rem}

Recall that a $\CQ$-category is a monad in $\QRel$. Similarly, a monad in $\QDist$ gives ``a $\CQ$-category over a base $\CQ$-category''; that is, a $\CQ$-category $X$ equipped with a $\CQ$-distributor $\al:X\oto X$, such that $1_X^*\preceq\al$ and $\al\circ\al\preceq\al$. The latter two inequalities actually force the $\CQ$-relation $\al$ on $X$ to be a $\CQ$-distributor, since with $a=1_X^*$ one has
$$a\circ(\al\circ a)\preceq a\circ(\al\circ\al)\preceq a\circ\al\preceq\al\circ\al\preceq\al.$$
Thus, a monad in $\QDist$ is given by a set $X$ over $\CQ_0$ that comes equipped with two $\CQ$-category structures, comparable by ``$\preceq$''. With morphisms to laxly preserve both structures we obtain the 2-category $\Mon(\QDist)$; hence, its morphisms $f:(X,\al)\to(Y,\be)$ are precisely the $\CQ$-functors $f:X\to Y$ with
$$f_!\cdot\olal\leq\olbe\cdot f$$
or, equivalently, $\al(x,x')\preceq\be(fx,fx')$ for all $x,x'\in X$, which are equipped with the order inherited from $\QCat$.

\begin{rem}
The 2-category $\Mon(\QDist)$ must be carefully distinguished from the 2-categories ${\sf Mnd}(\QDist)$, as  considered by Street \cite{Street1972}, and ${\sf EM}(\QDist)$, as considered by Lack and Street \cite{Lack2002}. Although all three 2-categories have the same objects, they have different 1-cells. In fact, ${\sf Mnd}(\QDist)$ and ${\sf EM}(\QDist)$ are internally constructed from $\QDist$, with 1-cells being those of $\QDist$, i.e., $\CQ$-distributors.
\end{rem}


\begin{thm} \label{Pd_Alg}
$\lamdQAlg\cong\Mon(\QDist)$.
\end{thm}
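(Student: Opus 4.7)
My approach is to establish mutually inverse assignments between the objects of $\lamdQAlg$ and $\Mon(\QDist)$. Given a lax algebra $(X, p)$, define $\al : X \oto X$ by $\olal := p \cdot \syd_X$; given a monoid $(X, \al)$, define $p := \dal$ with $\dal\tau = \tau \rda \al$ from the Isbell adjunction of Lemma~\ref{PX_PdY_la}. A direct calculation using the identity $1_X^* \rda \al = \al$ (which holds because $\al$ is a distributor on $(X, 1_X^*)$) shows $\dal \cdot \syd_X = \olal$, so the two assignments are inverse at the level of data once I prove that any $p$ satisfying the lax laws must equal $\dal$. Condition (f), $\sy_X \leq p \cdot \syd_X$, translates immediately via Lemma~\ref{dist_Yoneda}\ref{dist_Yoneda:y} ($\sy_X = \ola{1_X^*}$) and the order-isomorphism $\phi \mapsto \olphi$ from Lemma~\ref{functor_order}\ref{phi_leq_psi} into $1_X^* \preceq \al$.

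Next I would verify that, for $p = \dal$, condition (g), $\sfs_X \cdot p_! \cdot \lamd_X \cdot p_{\ie} \leq p \cdot \ssd_X$, becomes $\al \circ \al \preceq \al$. Since $\dal$ is a right adjoint in $\QCat$, Lemma~\ref{la_preserve_sup} gives $\dal \cdot \ssd_X = \inf_{\PX} \cdot \dal_{\ie}$; unfolding both sides pointwise using Lemmas~\ref{dist_Yoneda},~\ref{functor_Yoneda},~\ref{transpose} together with the explicit formula $\lamd_X = \ola{((\sy_X)_{\ie})_*}$ of Proposition~\ref{Pd_dist} produces the transitivity inequality. The hard part will be the converse: showing that any lax algebra structure $p$ must equal $\dal$ for the induced $\al$. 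Lemma~\ref{fyx_leq_gyx} immediately gives $p \leq \dal$, since $\dal$ is a right adjoint and $p \cdot \syd_X = \dal \cdot \syd_X$; the reverse $\dal \leq p$ requires genuine use of (g). My plan is to evaluate (g) at the copresheaf $(\syd_X)_{\ie}(\tau) \in \PdPdX$ for arbitrary $\tau \in \PdX$, noting that $\ssd_X \cdot (\syd_X)_{\ie} = 1_{\PdX}$ follows from $\inf_{\PdX}(\rho) = (\syd_X)^* \circ \rho$ (Lemma~\ref{Yoneda}\ref{PX_sup}) together with the full fidelity of $\syd_X$ (Lemma~\ref{fully_faithful_presheaf}\ref{fully_faithful_presheaf:ff}). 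The right-hand side of (g) thus collapses to $p(\tau)$, while the left-hand side, simplified via the naturality of $\sy$ and $\syd$, the strict law $\lamd_X \cdot (\sy_X)_{\ie} = \sy_{\PdX}$ of Proposition~\ref{Pd_dist}, and the Yoneda lemma, is shown to dominate $\dal(\tau) = \tau \rda \al$.

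Finally, for morphisms $f : (X, p) \to (Y, q)$ with $p = \dal$, $q = \dbe$, precomposing (h), $f_! \cdot p \leq q \cdot f_{\ie}$, with $\syd_X$ and using the naturality $f_{\ie} \cdot \syd_X = \syd_Y \cdot f$ (Lemma~\ref{functor_Yoneda}\ref{functor_Yoneda:yd_nat}) together with $\dal \cdot \syd_X = \olal$ and $\dbe \cdot \syd_Y = \olbe$ yields $f_! \cdot \olal \leq \olbe \cdot f$, i.e.\ $\al(x, x') \preceq \be(fx, fx')$; conversely, since $q \cdot f_{\ie}$ is a composite of right adjoints, Lemma~\ref{fyx_leq_gyx} lifts this inequality from the image of $\syd_X$ back to all of $\PdX$, recovering (h). The principal obstacle is the uniqueness $p = \dal$ in the third step: although $p \leq \dal$ is automatic, the reverse inequality depends essentially on the strictness of the $\bbP$-unit law of $\lamd$ to extract the Isbell-adjoint formula as a lower bound for $p(\tau)$ from condition (g).
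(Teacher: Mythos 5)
Your proposal is correct, and its overall architecture coincides with the paper's: both identify the algebra structure with the Isbell right adjoint $\dal$ of a distributor $\al$ (so that $\olal=p\cdot\syd_X$ and $\dal\cdot\syd_X=\olal$), and both translate \ref{f}, \ref{g}, \ref{h} into reflexivity, transitivity and the lax morphism condition, with your morphism step being essentially identical to the paper's Step 3. Where you genuinely diverge is the rigidity step $p=\dal$. The paper proves, by a purely compositional chain using the strict $\bbT$-unit law of $\lamd$, the oplax idempotency of $\bbP^{\dag}$ and the adjunction calculus, the closed formula $p={\inf}_{\PX}\cdot p_{\ie}\cdot(\syd_X)_{\ie}$, which exhibits $p$ as a right adjoint and then lets the isomorphism $\QDist(X,X)\cong\QInf(\PdX,\PX)$ of Lemma \ref{PX_PdY_la} do the rest; note that ${\inf}_{\PX}\cdot p_{\ie}\cdot(\syd_X)_{\ie}={\inf}_{\PX}\cdot\olal_{\ie}=\dal$, so the two formulations agree. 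You instead split the identity into $p\leq\dal$ (immediate from Lemma \ref{fyx_leq_gyx}, cheaper than anything in the paper) and $\dal\leq p$, obtained by evaluating \ref{g} at $(\syd_X)_{\ie}\tau$; this works, and I checked the key claim: since $\lamd_X\cdot\olal_{\ie}=\ola{(\al^{\opl})_*}$, one computes $(\lamd_X\olal_{\ie}\tau)(\tau')=\tau\rda(\al\circ\tau')$, and restricting the resulting join defining $\sy_X^!\,p_!\,\lamd_X\,\olal_{\ie}\tau$ at $x$ to the representable $\tau'=\syd_X x$ gives $[\tau\rda\al(x,-)]\circ\al(x,x)\succeq(\tau\rda\al)(x)$ by \ref{f}, so the left side of \ref{g} indeed dominates $\dal$ while its right side collapses to $p$. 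One caveat on your sketch: the tools you name for this bound are not quite the ones that do the work --- using $\sy_X\leq\olal$ together with the strict law $\lamd_X\cdot(\sy_X)_{\ie}=\sy_{\PdX}$ in the obvious way only yields the circular estimate $\sy_X^!\,p_!\,\lamd_X\,\olal_{\ie}\geq p$; what is actually needed is the pointwise Yoneda-style restriction to representables just described (or, equivalently, the paper's Step 1 chain). With that detail supplied, your route buys a somewhat more transparent uniqueness argument at the cost of a pointwise computation, whereas the paper stays entirely within the compositional calculus of Section 2.
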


\begin{proof}
{\bf Step 1.} We show that, if $(X,p)$ is a $(\lamd,\CQ)$-algebra, then
\begin{equation} \label{lamdQ_p}
p={\inf}_{\PX}\cdot p_{\ie}\cdot(\syd_X)_{\ie}.
\end{equation}
Indeed, the conditions \ref{f} and \ref{g} for the $(\lamd,\CQ)$-algebra $(X,p)$ read as
\begin{enumerate}[label=(\alph*),start=6]
\item \label{lamd:f}
    $\sy_X\leq p\cdot\syd_X$ and
\item \label{lamd:g}
    $\sy_X^!\cdot p_!\cdot\lamd_X\cdot p_{\ie}\leq p\cdot(\syd_X)^{\ie}$,
\end{enumerate}
and consequently
\begin{align*}
p&={\inf}_{\PX}\cdot\syd_{\PX}\cdot p\\
&={\inf}_{\PX}\cdot p_{\ie}\cdot\syd_{\PdX}&(\syd\ \text{is natural})\\
&\leq{\inf}_{\PX}\cdot p_{\ie}\cdot(\syd_X)_{\ie}&(\bbP^{\dag}\ \text{is oplax idempotent})\\
&=(\syd_X)^!\cdot(\syd_X)_!\cdot{\inf}_{\PX}\cdot p_{\ie}\cdot(\syd_X)_{\ie}&(\syd_X\ \text{is fully faithful})\\
&=(\syd_X)^!\cdot\lamd_X\cdot\syd_{\PX}\cdot{\inf}_{\PX}\cdot p_{\ie}\cdot(\syd_X)_{\ie}&(\lamd\ \text{satisfies \ref{d} strictly})\\
&\leq(\syd_X)^!\cdot\lamd_X\cdot p_{\ie}\cdot(\syd_X)_{\ie}&(\syd_{\PX}\dv{\inf}_{\PX})\\
&\leq(\syd_X)^!\cdot p^!\cdot p_!\cdot\lamd_X\cdot p_{\ie}\cdot(\syd_X)_{\ie}&(p_!\dv p^!)\\
&\leq\sy_X^!\cdot p_!\cdot\lamd_X\cdot p_{\ie}\cdot(\syd_X)_{\ie}&(p\ \text{satisfies \ref{lamd:f}})\\
&\leq p\cdot(\syd_X)^{\ie}\cdot(\syd_X)_{\ie}&(p\ \text{satisfies \ref{lamd:g}})\\
&=p.&(\syd_X\ \text{is fully faithful})
\end{align*}

{\bf Step 2.} As an immediate consequence of \eqref{lamdQ_p}, $p$ is a right adjoint in $\QCat$. For any $\CQ$-category $X$, one already has
$$\QDist(X,X)\cong\QInf(\PdX,\PX)$$
from Lemma \ref{PX_PdY_la}, with the isomorphism given by
$$(\al:X\oto X)\mapsto(\dal:\PdX\to\PX,\quad\dal\tau=\tau\rda\al).$$
Hence, in order for us to establish a bijection between monads on $X$ (in $\QDist$) and $(\lamd,\CQ)$-algebra structures on $X$, it suffices to prove
\begin{enumerate}[label=$\bullet$]
\item $1_X^*\preceq\al\iff\dal$ satisfies \ref{lamd:f}, and
\item $\al\circ\al\preceq\al\iff\dal$ satisfies \ref{lamd:g}
\end{enumerate}
for all $\CQ$-distributors $\al:X\oto X$.

First, $1_X^*\preceq\al\iff\dal$ satisfies \ref{lamd:f}. Since $\ola{1_X^*}=\sy_X$ and, as one easily sees, $\olal=\dal\cdot\syd_X$, the equivalence $1_X^*\preceq\al\iff\sy_X\leq\dal\cdot\syd_X$ follows immediately.

Second, $\al\circ\al\preceq\al\iff\dal$ satisfies \ref{lamd:g}, \emph{i.e.},
$$\sy_X^!\cdot(\dal)_!\cdot\lamd_X\cdot(\dal)_{\ie}\leq\dal\cdot(\syd_X)^{\ie}=\dal\cdot{\inf}_{\PdX}.$$
Note that
\begin{align*}
\ola{\al\circ\al}&=\sy_X^!\cdot\olal_!\cdot\olal&(\text{Lemma \ref{transpose}\ref{transpose:l}})\\
&=\sy_X^!\cdot(\dal)_!\cdot(\syd_X)_!\cdot\dal\cdot\syd_X &(\dal\cdot\syd_X=\olal)\\
&=\sy_X^!\cdot(\dal)_!\cdot\lamd_X\cdot\syd_{\PX}\cdot\dal\cdot\syd_X&(\lamd\ \text{satisfies \ref{d} strictly})\\
&=\sy_X^!\cdot(\dal)_!\cdot\lamd_X\cdot(\dal)_{\ie}\cdot\syd_{\PdX}\cdot\syd_X&(\syd\ \text{is natural})
\end{align*}
and, hence,
\begin{align*}
\al\circ\al\preceq\al&\iff\ola{\al\circ\al}\leq\olal=\dal\cdot\syd_X\\
&\iff\sy_X^!\cdot(\dal)_!\cdot\lamd_X\cdot(\dal)_{\ie}\cdot\syd_{\PdX}\cdot\syd_X\leq\dal\cdot\syd_X\\
&\iff\sy_X^!\cdot(\dal)_!\cdot\lamd_X\cdot(\dal)_{\ie}\cdot\syd_{\PdX}\leq\dal=\dal\cdot{\inf}_{\PdX}\cdot\syd_{\PdX}&(\text{Lemma \ref{fyx_leq_gyx}})\\
&\iff\sy_X^!\cdot(\dal)_!\cdot\lamd_X\cdot(\dal)_{\ie}\leq\dal\cdot{\inf}_{\PdX},&(\text{Lemma \ref{fyx_leq_gyx}})
\end{align*}
as desired.

{\bf Step 3.} $f:(X,\al)\to(Y,\be)$ is a morphism in $\Mon(\QDist)$ if, and only if, $f:(X,\dal)\to(Y,\dbe)$ satisfies \ref{h}. Indeed,
\begin{align*}
f_!\cdot\dal\leq\dbe\cdot f_{\ie}&\iff f_!\cdot\dal\cdot\syd_X\leq\dbe\cdot f_{\ie}\cdot\syd_X&(\text{Lemma \ref{fyx_leq_gyx}})\\
&\iff f_!\cdot\dal\cdot\syd_X\leq\dbe\cdot\syd_Y\cdot f&(\syd\ \text{is natural})\\
&\iff f_!\cdot\olal\leq\olbe\cdot f,
\end{align*}
which completes the proof.
\end{proof}

\section{The distributive law of the double presheaf 2-monad} \label{Law_PPd}

Recall that the adjunctions $(-)^*\dv\sP$ and $(-)_*\dv\sPd$ displayed in \eqref{QCat_QDist_adjunction} give rise to the isomorphisms
\begin{equation} \label{YPX_XY_XPdY}
\QCat(Y,\PX)\cong\QDist(X,Y)\cong\QCat(X,\PdY),
\end{equation}
for all $\CQ$-categories $X$, $Y$. In fact, \eqref{YPX_XY_XPdY} induces another pair of adjoint 2-functors \cite{Stubbe2017}
\begin{equation} \label{Pdc_dv_Pd}
\sPd_{\rc}\dv\sP_{\rc}:\QCat\to(\QCat)^{\co\op},
\end{equation}
which map objects as $\sPd$ and $\sP$ do, but with $\sPd_{\rc} f=f^{\ie}$ and $\sP_{\rc} f=f^!$ for all $\CQ$-functor $f$. The units and counits of this adjunction are respectively given by
$$\begin{array}{ccc}
\sy_{\PdX}\cdot\syd_X=(\syd_X)_!\cdot\sy_X:X\to\PPdX & \text{and} & \syd_{\PX}\cdot\sy_X=(\sy_X)_{\ie}\cdot\syd_X:X\to\PdPX\\
\bfig \square<600,350>[X`\PdX`\PX`\PPdX;\syd_X`\sy_X`\sy_{\PdX}`(\syd_X)_!] \efig & &
\bfig \square<600,350>[X`\PX`\PdX`\PdPX;\sy_X`\syd_X`\syd_{\PX}`(\sy_X)_{\ie}] \efig
\end{array}$$
for all $\CQ$-categories $X$. This adjunction induces the \emph{double presheaf 2-monad} $(\sP_{\rc}\sPd_{\rc},\Fy,\Fs)$ on $\QCat$ with the multiplication given by
\begin{equation} \label{Fs_def}
\Fs_X=((\sy_{\PdX})_{\ie}\cdot\syd_{\PdX})^!=(\syd_{\PPdX}\cdot\sy_{\PdX})^!=\sfs_{\PdX}\cdot(\syd_{\PPdX})^!:\PPd\PPdX\to\PPdX.
\end{equation}
As Lemma \ref{functor_Yoneda}\ref{functor_Yoneda:f_double} implies $\sP_{\rc}\sPd_{\rc}=\PPd$, the double presheaf 2-monad on $\QCat$ may alternatively be written as
$$\bbP\bbP^{\dag}=(\PPd,\Fy,\Fs).$$
With Lemma \ref{fully_faithful_presheaf} and Theorem \ref{Yoneda_flat_dist_law} one obtains immediately:

\begin{prop} \label{PPd_dist}
The double presheaf 2-monad $\bbP\bbP^{\dag}$ distributes flatly over $\bbP$ by $\Lam$, with
$$\Lam_X=\ola{((\sy_X)_{\ie !})_*}=((\sy_X)_{\ie !})^!\cdot\sy_{\PPd\PX}=\sy_{\PPdX}\cdot((\sy_X)_{\ie})^!:\PPd\PX\to\sP\PPdX.$$
\end{prop}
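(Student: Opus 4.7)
The plan is to invoke Theorem \ref{Yoneda_flat_dist_law} for the 2-monad $\bbT=\bbP\bbP^{\dag}$ with $T=\PPd$. To do this I first need to identify $T\sy_X$ and verify that it is fully faithful. Since $T=\PPd$ acts on morphisms by $f\mapsto (f_{\ie})_!$, we have $T\sy_X=((\sy_X)_{\ie})_!=(\sy_X)_{\ie!}:\PPdX\to\PPd\PX$. Starting from the full faithfulness of $\sy_X$ (Lemma \ref{Yoneda}), two successive applications of the chain of equivalences in Lemma \ref{fully_faithful_presheaf}\ref{fully_faithful_presheaf:ff}---first passing from $\sy_X$ to $(\sy_X)_{\ie}$, then from $(\sy_X)_{\ie}$ to $((\sy_X)_{\ie})_!$---give the full faithfulness of $T\sy_X$. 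Theorem \ref{Yoneda_flat_dist_law} thereby delivers the desired (unique) flat distributive law, together with the first expression $\Lam_X=\ola{((\sy_X)_{\ie!})_*}$.

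It remains to verify the two rewritings. For the first one, $\Lam_X=((\sy_X)_{\ie!})^!\cdot\sy_{\PPd\PX}$, I simply apply Lemma \ref{functor_Yoneda}\ref{functor_Yoneda:yd_nat}, which expresses $\ola{f_*}$ as $f^!\cdot\sy_Y$ for any $\CQ$-functor $f:X\to Y$. For the second rewriting, $\Lam_X=\sy_{\PPdX}\cdot((\sy_X)_{\ie})^!$, I proceed in two steps: first, apply the naturality of the Yoneda embedding (the identity $\sy_Y\cdot f=f_!\cdot\sy_X$ from Lemma \ref{functor_Yoneda}\ref{functor_Yoneda:y_nat}) to the $\CQ$-functor $((\sy_X)_{\ie})^!:\PPd\PX\to\PPdX$, yielding $\sy_{\PPdX}\cdot((\sy_X)_{\ie})^!=(((\sy_X)_{\ie})^!)_!\cdot\sy_{\PPd\PX}$; then use the interchange identity $(f_!)^!=(f^!)_!$ from Lemma \ref{functor_Yoneda}\ref{functor_Yoneda:f_double} with $f=(\sy_X)_{\ie}$ to rewrite the prefactor as $(((\sy_X)_{\ie})_!)^!=((\sy_X)_{\ie!})^!$.

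The only genuine obstacle is notational bookkeeping: one must keep straight the three distinct objects $\PPd\PX$, $\PPdX$, $\sP\PPdX$, and carefully distinguish $_!$, $^!$, $_{\ie}$, $^{\ie}$, and their compositions $_{\ie!}$, when tracking sources and targets of every factor. Once the domains and codomains of each factor are pinned down, every step of the proof reduces to a one-line appeal to a lemma from Section \ref{QCat}, with no further calculation required.
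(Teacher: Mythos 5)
Your proposal is correct and follows exactly the route the paper takes: the paper derives Proposition \ref{PPd_dist} directly from Lemma \ref{fully_faithful_presheaf} and Theorem \ref{Yoneda_flat_dist_law}, with the identification $T\sy_X=(\sy_X)_{\ie !}$ and the same rewritings via Lemma \ref{functor_Yoneda}. Your version merely spells out the bookkeeping (two applications of Lemma \ref{fully_faithful_presheaf}\ref{fully_faithful_presheaf:ff}, naturality of $\sy$, and the interchange $(f^!)_!=(f_!)^!$) that the paper leaves implicit.
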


A \emph{$\CQ$-interior space} is a pair $(X,c)$ consisting of a $\CQ$-category $X$ and a $\CQ$-closure operation $c$ on $\PdX$. A \emph{continuous $\CQ$-functor} $f:(X,c)\to(Y,d)$ between $\CQ$-interior spaces is a $\CQ$-functor $f:X\to Y$ such that
$$c\cdot f^{\ie}\leq f^{\ie}\cdot d:\PdY\to\PdX.$$

$\CQ$-interior spaces and continuous $\CQ$-functors constitute a 2-category $\QInt$, with the local order inherited from $\QCat$. To prove that these are precisely the lax $\Lam$-algebras over $\CQ$ requires the full arsenal of tools provided in this paper.

\begin{rem}
When $\CQ$ is a commutative quantale, $\sV$, one has $u\lda v=v\rda u$ for all $u,v\in\sV$. Considering a set $X$ as a discrete $\sV$-category one can display $\PX$ and $\PdX$ as having the same underlying set $\sV^X$, and for all $\phi,\psi\in\sV^X$ one has
$$1_{\PX}^*(\phi,\psi)=1_{\PdX}^*(\psi,\phi),$$
\emph{i.e.}, $\PdX$ is the dual of $\PX$. Thus, for a closure operation $c:\PdX\to\PdX$ one has
$$1_{\PdX}\leq c\iff c\leq 1_{\PX},$$
that is, $c$ is an \emph{interior operation} on $\PX$ (see \cite{Lai2009}).
Particularly, when $\sV={\bf 2}$, $\PX$ is just the powerset of $X$, and a closure operation $c$ on $\PdX$ is exactly an interior operation on the powerset of $X$. So, an interior space $(X,c)$ as defined here coincides with the usual notion.
\end{rem}

\begin{thm} \label{PPd_Alg}
$\LamQAlg\cong\QInt$.
\end{thm}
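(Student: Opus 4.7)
Paralleling the proof of Theorem \ref{Pd_Alg}, the goal is to exhibit, for each $\CQ$-category $X$, a natural bijection between lax $\Lam$-algebra structures $p: \PPdX \to \PX$ and $\CQ$-closure operations $c: \PdX \to \PdX$, and then to check that under this bijection the lax $\Lam$-homomorphism condition on $\CQ$-functors $f: X \to Y$ becomes exactly the $\CQ$-interior continuity condition.

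\textbf{Step 1.} We first show that any lax $\Lam$-algebra structure $p$ admits a canonical description from which a $\CQ$-functor $c: \PdX \to \PdX$ can be extracted. Exploiting the lax unit law $\sy_X \leq p \cdot \Fy_X = p \cdot \sy_{\PdX} \cdot \syd_X$ together with the lax multiplication law, and using the explicit formulas $\Lam_X = \sy_{\PPdX} \cdot ((\sy_X)_{\ie})^!$ (Proposition \ref{PPd_dist}) and $\Fs_X = \sfs_{\PdX} \cdot (\syd_{\PPdX})^!$ from \eqref{Fs_def}, we perform a calculation modelled on the derivation of \eqref{lamdQ_p} in the proof of Theorem \ref{Pd_Alg}. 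The key ingredients are: the strict laws \ref{b} and \ref{c} enjoyed by $\Lam$ (Theorem \ref{Yoneda_flat_dist_law}, using that $\PPd\sy_X$ is fully faithful by Lemma \ref{fully_faithful_presheaf}), the lax idempotence of $\bbP$ and the oplax idempotence of $\bbP^{\dag}$, the naturality of $\sy$ and $\syd$, combined with the adjunction identities of Lemmas \ref{adjoint_functor}--\ref{functor_order} and the Yoneda manipulations of Lemmas \ref{dist_Yoneda}--\ref{transpose}. The desired $c: \PdX \to \PdX$ is then read off as the appropriate transpose of $p$ through the isomorphisms of Lemma \ref{PX_PdY_la}.

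\textbf{Step 2.} Once the canonical form is in place, we check that the lax unit law \ref{f} for $p$ translates via Lemma \ref{fyx_leq_gyx} into $1_{\PdX} \leq c$, while the lax multiplication law \ref{g}, together with Proposition \ref{Pd_dist} (the strict distributive law of $\bbP^{\dag}$ over $\bbP$ embedded in $\Lam$) and the adjunction rules of Lemmas \ref{adjoint_functor} and \ref{functor_order}, translates into $c \cdot c \leq c$ (the reverse inequality being automatic). Conversely, a $\CQ$-closure $c$ on $\PdX$ reconstructs a lax $\Lam$-algebra structure by reversing this correspondence, and the two assignments are mutually inverse by the uniqueness implicit in Lemma \ref{fyx_leq_gyx}.

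\textbf{Step 3.} Finally, for a $\CQ$-functor $f: (X,p) \to (Y,q)$, the lax homomorphism law \ref{h}, $f_! \cdot p \leq q \cdot \PPd f$, unfolds to the $\CQ$-interior continuity condition $c \cdot f^{\ie} \leq f^{\ie} \cdot d$ via Lemma \ref{fyx_leq_gyx}, the naturality of $\sy$ and $\syd$, and Lemmas \ref{dist_Yoneda}--\ref{functor_Yoneda}. The chief technical obstacle is Step 1: because $\bbP\bbP^{\dag}$ is a composite monad, the iterated (co-)Yoneda calculations become lengthy, and isolating a $\CQ$-functor $c$ whose target is $\PdX$ (rather than merely $\PX$) requires careful simultaneous use of both the lax idempotence of $\bbP$ and the oplax idempotence of $\bbP^{\dag}$.
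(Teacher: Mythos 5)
Your overall strategy coincides with the paper's: derive a canonical form for $p$, use Lemma \ref{PX_PdY_la} to set up a bijection $\QDist(\PdX,X)\cong(\QCat)^{\co}(\PdX,\PdX)\cong(\QInf)^{\co}(\PPdX,\PX)$, match the unit and multiplication laws on either side, and then match the morphism conditions. However, as written the proposal is an outline rather than a proof: every substantive computation is deferred with phrases such as ``we perform a calculation modelled on the derivation of \eqref{lamdQ_p}'' or ``translates into $c\cdot c\leq c$''. Two concrete gaps stand out. First, in Step 1 you never state what the canonical form of $p$ actually is (the paper's identity \eqref{LamQ_p}, $p=(\sy_X)^!\cdot(\syd_{\PX})^!\cdot p_{\ie !}\cdot{\inf}_{\PPdX}^!\cdot\sy_{\PPdX}$), and its derivation is not a routine copy of the $\lamd$ case: it requires first simplifying \ref{f} and \ref{g} to $1_{\PX}\leq p\cdot(\syd_X)_!$ and $p\cdot((\sy_X)_{\ie})^!\cdot p_{\ie !}\leq p\cdot{\sup}_{\PPdX}\cdot(\syd_{\PPdX})^!$, and then uses the surjectivity of ${\inf}_{\PPdX}$ together with Lemma \ref{la_preserve_sup} --- ingredients absent from the proof of \eqref{lamdQ_p} and not mentioned in your plan.

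Second, and more seriously, the heart of the theorem is the equivalence between idempotency of $\ophi:\PdX\to\PdX$ (a condition on the copresheaf side) and the lax multiplication law for $\phi_{\od}:\PPdX\to\PX$ (a condition on the presheaf side). Your Step 2 asserts this translation ``together with Proposition \ref{Pd_dist} \dots and the adjunction rules'', but Proposition \ref{Pd_dist} plays no role here; what is actually needed is the identity $\phi^{\opl}=\olphi^{\ie}\cdot(\sy_{\PdX})_{\ie}$ (obtained from Lemma \ref{dist_Yoneda}\ref{dist_Yoneda:phi}), the rewriting of $\Fs_X$ as $(\syd_{\PdX})^!\cdot((\sy_{\PdX})_{\ie})^!$, and the passage between $(\cdot)^{\opl}$- and $(\cdot)^{\od}$-statements via Lemma \ref{functor_order}\ref{phi_leq_psi} followed by the adjunctions $(\syd_{\PdX})_!\cdot\phi^{\od}\dv\phi_{\od}\cdot(\syd_{\PdX})^!$ and $\olphi^{\ie !}\cdot\phi^{\od}\dv\phi_{\od}\cdot(\olphi_{\ie})^!$. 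Without identifying this mechanism the claimed equivalence is not established. (A minor further slip: the unit-law equivalence $1_{\PdX}\leq\ophi\iff 1_{\PX}\leq\phi_{\od}\cdot(\syd_X)_!$ rests on Lemma \ref{functor_order}\ref{phi_leq_psi} and the adjunction $\phi^{\od}\dv\phi_{\od}$, not on Lemma \ref{fyx_leq_gyx} as you state.)
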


\begin{proof}
{\bf Step 1.} We show that, if $(X,p)$ is a $(\Lam,\CQ)$-algebra, then
\begin{equation} \label{LamQ_p}
p=({\inf}_{\PPdX}\cdot p^{\ie}\cdot\syd_{\PX}\cdot\sy_X)^!\cdot\sy_{\PPdX}=(\sy_X)^!\cdot(\syd_{\PX})^!\cdot p_{\ie !}\cdot{\inf}_{\PPdX}^!\cdot\sy_{\PPdX}.
\end{equation}
Indeed, from the definition of the 2-monad $\bbP\bbP^{\dag}$ one may translate the conditions \ref{f} and \ref{g} for $(X,p)$ respectively as
$$\sy_X\leq p\cdot(\syd_X)_!\cdot\sy_X\quad\text{and}\quad\sy_X^!\cdot p_!\cdot\Lam_X\cdot p_{\ie !}\leq p\cdot{\sup}_{\PPdX}\cdot(\syd_{\PPdX})^!.$$
Since from Lemma \ref{fyx_leq_gyx} one has
$$\sy_X\leq p\cdot(\syd_X)_!\cdot\sy_X\iff 1_{\PX}\leq p\cdot(\syd_X)_!$$
and since $\Lam_X=\sy_{\PPdX}\cdot((\sy_X)_{\ie})^!$ implies
\begin{align*}
\sy_X^!\cdot p_!\cdot\Lam_X\cdot p_{\ie !}&=\sy_X^!\cdot p_!\cdot\sy_{\PPdX}\cdot((\sy_X)_{\ie})^!\cdot p_{\ie !}\\
&={\sup}_{\PX}\cdot\sy_{\PX}\cdot p\cdot((\sy_X)_{\ie})^!\cdot p_{\ie !}&(\sy\ \text{is natural})\\
&=p\cdot((\sy_X)_{\ie})^!\cdot p_{\ie !},
\end{align*}
the conditions \ref{f} and \ref{g} may be simplified to read as
\begin{enumerate}[label=(\alph*),start=6]
\item \label{Lam:f}
    $1_{\PX}\leq p\cdot(\syd_X)_!$ and
\item \label{Lam:g}
    $p\cdot((\sy_X)_{\ie})^!\cdot p_{\ie !}\leq p\cdot{\sup}_{\PPdX}\cdot(\syd_{\PPdX})^!$.
\end{enumerate}
Therefore,
\begin{align*}
p&={\sup}_{\PX}\cdot\sy_{\PX}\cdot p\\
&=(\sy_X)^!\cdot p_!\cdot\sy_{\PPdX}&(\sy\ \text{is natural})\\
&=(\sy_X)^!\cdot p_!\cdot({\inf}_{\PPdX})_!\cdot{\inf}_{\PPdX}^!\cdot\sy_{\PPdX}&({\inf}_{\PPdX}\ \text{is surjective})\\
&\leq(\sy_X)^!\cdot({\inf}_{\PX})_!\cdot p_{\ie !}\cdot{\inf}_{\PPdX}^!\cdot\sy_{\PPdX}&(\text{Lemma \ref{la_preserve_sup}})\\
&=(\sy_X)^!\cdot(\syd_{\PX})^!\cdot p_{\ie !}\cdot{\inf}_{\PPdX}^!\cdot\sy_{\PPdX}&(\syd_{\PX}\dv{\inf}_{\PX})\\
&=(\syd_X)^!\cdot((\sy_X)_{\ie})^!\cdot p_{\ie !}\cdot{\inf}_{\PPdX}^!\cdot\sy_{\PPdX}&(\syd\ \text{is natural})\\
&\leq p\cdot(\syd_X)_!\cdot(\syd_X)^!\cdot((\sy_X)_{\ie})^!\cdot p_{\ie !}\cdot{\inf}_{\PPdX}^!\cdot\sy_{\PPdX}&(p\ \text{satisfies \ref{Lam:f}})\\
&\leq p\cdot((\sy_X)_{\ie})^!\cdot p_{\ie !}\cdot{\inf}_{\PPdX}^!\cdot\sy_{\PPdX}&((\syd_X)_!\dv(\syd_X)^!)\\
&\leq p\cdot{\sup}_{\PPdX}\cdot(\syd_{\PPdX})^!\cdot{\inf}_{\PPdX}^!\cdot\sy_{\PPdX}&(p\ \text{satisfies \ref{Lam:g}})\\
&=p.
\end{align*}

{\bf Step 2.} As an immediate consequence of \eqref{LamQ_p}, $p$ is a right adjoint in $\QCat$. For every $\CQ$-category $X$ one has
$$\QDist(\PdX,X)\cong(\QCat)^{\co}(\PdX,\PdX)\cong(\QInf)^{\co}(\PPdX,\PX)$$
from Lemma \ref{PX_PdY_la}, with the isomorphisms given by
$$(\phi:\PdX\oto X)\mapsto(\ophi:\PdX\to\PdX)\mapsto(\phi_{\od}:\PPdX\to\PX).$$
Consequently, in order for us to establish a bijection between $\CQ$-closure operations on $\PdX$ and $(\Lam,\CQ)$-algebra structures on $X$, it suffices to prove
\begin{enumerate}[label=$\bullet$]
\item $1_{\PdX}\leq\ophi\iff\phi_{\od}$ satisfies \ref{Lam:f}, and
\item $\ophi\cdot\ophi\leq\ophi\iff\phi_{\od}$ satisfies \ref{Lam:g}
\end{enumerate}
for all $\CQ$-distributors $\phi:\PdX\oto X$.

First, $1_{\PdX}\leq\ophi\iff\phi_{\od}$ satisfies \ref{Lam:f}. Indeed,
\begin{align*}
\ora{(\syd_X)^*}=1_{\PdX}\leq\ophi&\iff\phi^{\od}\leq(\syd_X)^{*\od}=(\syd_X)_!&(\text{Lemma \ref{functor_order}\ref{phi_leq_psi}})\\
&\iff 1_{\PX}\leq\phi_{\od}\cdot(\syd_X)_!.&(\phi^{\od}\dv\phi_{\od})
\end{align*}

Second, $\ophi\cdot\ophi\leq\ophi\iff\phi_{\od}$ satisfies \ref{Lam:g}, \emph{i.e.},
$$\phi_{\od}\cdot((\sy_X)_{\ie})^!\cdot(\phi_{\od})_{\ie !}\leq\phi_{\od}\cdot{\sup}_{\PPdX}\cdot(\syd_{\PPdX})^!.$$
Since
\begin{align*}
\phi_{\od}\cdot((\sy_X)_{\ie})^!\cdot(\phi_{\od})_{\ie !}&=\phi_{\od}\cdot((\sy_X)_{\ie})^!\cdot(\phi_{\od})^{\ie !}&(\text{Lemma \ref{functor_Yoneda}\ref{functor_Yoneda:f_double}})\\
&=\phi_{\od}\cdot((\sy_X)_{\ie})^!\cdot((\phi^{\od})_{\ie})^!&(\phi^{\od}\dv\phi_{\od})\\
&=\phi_{\od}\cdot(\olphi_{\ie})^!,&(\text{Lemma \ref{dist_Yoneda}\ref{dist_Yoneda:olphi}})
\end{align*}
and since from \eqref{Fs_def} one already knows
$$\Fs_X={\sup}_{\PPdX}\cdot(\syd_{\PPdX})^!=(\syd_{\PdX})^!\cdot((\sy_{\PdX})_{\ie})^!,$$
the condition \ref{Lam:g} for $\phi_{\od}$ may be alternatively expressed as
$$\phi_{\od}\cdot(\olphi_{\ie})^!\leq\phi_{\od}\cdot(\syd_{\PdX})^!\cdot((\sy_{\PdX})_{\ie})^!.$$
Moreover, from Lemma \ref{dist_Yoneda}\ref{dist_Yoneda:phi} one has
\begin{equation} \label{olphi_ophi}
\phi^{\opl}=(\olphi^*\circ(\sy_X)_*)^{\opl}=\olphi^{\ie}\cdot(\sy_{\PdX})_{\ie},
\end{equation}
and, consequently,
\begin{align*}
&\ophi\cdot\ophi\leq\ophi\\
\iff{}&\phi^{\opl}\cdot\syd_{\PdX}\cdot\phi^{\opl}\cdot\syd_{\PdX}\leq\phi^{\opl}\cdot\syd_{\PdX}&(\text{Lemma \ref{dist_Yoneda}\ref{dist_Yoneda:olphi}})\\
\iff{}&\phi^{\opl}\cdot\syd_{\PdX}\cdot\phi^{\opl}\leq\phi^{\opl}&(\text{Lemma \ref{fyx_leq_gyx}})\\
\iff{}&\phi^{\opl}\cdot(\phi^{\opl})_{\ie}\cdot\syd_{\PdPdX}\leq\phi^{\opl}=\phi^{\opl}\cdot{\inf}_{\PdPdX}\cdot\syd_{\PdPdX}&(\syd\ \text{is natural})\\
\iff{}&\phi^{\opl}\cdot(\phi^{\opl})_{\ie}\leq\phi^{\opl}\cdot{\inf}_{\PdPdX}=\phi^{\opl}\cdot(\syd_{\PdX})^{\ie}&(\text{Lemma \ref{fyx_leq_gyx}})\\
\iff{}&(\phi\circ(\phi^{\opl})_*)^{\opl}\leq(\phi\circ(\syd_{\PdX})^*)^{\opl}\\
\iff{}&(\phi\circ(\syd_{\PdX})^*)^{\od}\leq(\phi\circ(\phi^{\opl})_*)^{\od}&(\text{Lemma \ref{functor_order}\ref{phi_leq_psi}})\\
\iff{}&(\syd_{\PdX})_!\cdot\phi^{\od}\leq(\phi^{\opl})^!\cdot\phi^{\od}=((\sy_{\PdX})_{\ie})^!\cdot\olphi^{\ie !}\cdot\phi^{\od}&(\text{Equation \eqref{olphi_ophi}})\\
\iff{}&\phi_{\od}\cdot(\olphi_{\ie})^!\leq\phi_{\od}\cdot(\syd_{\PdX})^!\cdot((\sy_{\PdX})_{\ie})^!\\
\iff{}&\phi_{\od}\ \text{satisfies \ref{Lam:g}};
\end{align*}
here the penultimate equivalence is an immediate consequence of
$$(\syd_{\PdX})_!\cdot\phi^{\od}\dv\phi_{\od}\cdot(\syd_{\PdX})^!\quad\text{and}\quad\olphi^{\ie !}\cdot\phi^{\od}\dv\phi_{\od}\cdot(\olphi_{\ie})^!.$$

{\bf Step 3.} For any $\psi:\PdY\oto Y$, $f:(X,\ophi)\to(Y,\opsi)$ is a continuous $\CQ$-functor if, and only if, $f$ as a morphism $(X,\phi_{\od})\to(Y,\psi_{\od})$ satisfies \ref{h}, \emph{i.e.},
$$f_!\cdot\phi_{\od}\leq\psi_{\od}\cdot f_{\ie !}.$$
Indeed,
\begin{align*}
&\ophi\cdot f^{\ie}\leq f^{\ie}\cdot\opsi\\
\iff{}&\phi^{\opl}\cdot\syd_{\PdX}\cdot f^{\ie}\leq f^{\ie}\cdot\psi^{\opl}\cdot\syd_{\PdY}&(\text{Lemma \ref{dist_Yoneda}\ref{dist_Yoneda:olphi}})\\
\iff{}&\phi^{\opl}\cdot(f^{\ie})_{\ie}\cdot\syd_{\PdY}\leq f^{\ie}\cdot\psi^{\opl}\cdot\syd_{\PdY}&(\syd\ \text{is natural})\\
\iff{}&\phi^{\opl}\cdot(f^{\ie})_{\ie}\leq f^{\ie}\cdot\psi^{\opl}&(\text{Lemma \ref{fyx_leq_gyx}})\\
\iff{}&\phi^{\opl}\cdot(f_{\ie})^{\ie}\leq f^{\ie}\cdot\psi^{\opl}&(\text{Lemma \ref{functor_Yoneda}\ref{functor_Yoneda:f_double}})\\
\iff{}&(\phi\circ(f_{\ie})^*)^{\opl}\leq(f^*\circ\psi)^{\opl}\\
\iff{}&(f^*\circ\psi)^{\od}\leq(\phi\circ(f_{\ie})^*)^{\od}&(\text{Lemma \ref{functor_order}\ref{phi_leq_psi}})\\
\iff{}&\psi^{\od}\cdot f_!\leq f_{\ie !}\cdot\phi^{\od}\\
\iff{}&f_!\cdot\phi_{\od}\leq\psi_{\od}\cdot f_{\ie !};&(\phi^{\od}\dv\phi_{\od}\ \text{and}\ \psi^{\od}\dv\psi_{\od})
\end{align*}
here Lemma \ref{fyx_leq_gyx} is applicable to the third equivalence because $f^{\ie}=(f^*)^{\opl}$ and $\psi^{\opl}$ are right adjoints in $\QCat$. This completes the proof.
\end{proof}

\section{The distributive law of the double copresheaf 2-monad} \label{Law_PdP}

Considering the dual of the adjunction \eqref{Pdc_dv_Pd},
\begin{equation} \label{Pc_dv_Pdc}
\sP_{\rc}^{\co\op}\dv(\sPd_{\rc})^{\co\op}:\QCat\to(\QCat)^{\co\op},
\end{equation}
one naturally constructs the \emph{double copresheaf 2-monad}
$$\bbP^{\dag}\bbP=(\PdP,\Fyd,\Fsd)$$
on $\QCat$, with the units given by
\begin{equation} \label{Fyd_def}
\Fyd_X=\syd_{\PX}\cdot\sy_X=(\sy_X)_{\ie}\cdot\syd_X:X\to\PdPX
\end{equation}
and the multiplication by
\begin{equation} \label{Fsd_def}
\Fsd_X=(\sy_{\PdPX}\cdot\syd_{\PX})^{\ie}=((\syd_{\PX})_!\cdot\sy_{\PX})^{\ie}=\ssd_{\PX}\cdot\sy_{\PdPX}^{\ie}:\PdP\PdPX\to\PdPX.
\end{equation}
Lemma \ref{fully_faithful_presheaf} and Theorem \ref{Yoneda_flat_dist_law} imply:

\begin{prop} \label{PdP_dist}
The double copresheaf 2-monad $\bbP^{\dag}\bbP$ distributes flatly over $\bbP$ by $\Lamd$ with
\[\Lamd_X=\ola{((\sy_X)_{!\ie})_*}=((\sy_X)_{!\ie})^!\cdot\sy_{\PPd\PX}=\sy_X^{!\ie !}\cdot\sy_{\PdP\PX}:\PdP\PX\to\sP\PdPX.\]
\end{prop}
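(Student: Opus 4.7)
My plan is to reduce the proposition directly to Theorem \ref{Yoneda_flat_dist_law}, which is the workhorse for the analogous Propositions \ref{P_dist}, \ref{Pd_dist} and \ref{PPd_dist}. That theorem guarantees that a 2-monad $\bbT=(T,m,e)$ on $\QCat$ distributes flatly over $\bbP$ via $\lam_X=\ola{(T\sy_X)_*}$, and uniquely so, provided each $T\sy_X$ is fully faithful. Here $T=\PdP$ and $T\sy_X=(\sy_X)_{!\ie}:\PdPX\to\PdP\PX$, so the whole proposition hinges on showing that $(\sy_X)_{!\ie}$ is fully faithful and then massaging $\ola{((\sy_X)_{!\ie})_*}$ into the two stated alternative forms.

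For the first task, I would argue by a triple application of Lemma \ref{fully_faithful_presheaf}\ref{fully_faithful_presheaf:ff}. The Yoneda embedding $\sy_X:X\to\PX$ is fully faithful by Lemma \ref{Yoneda}(1); hence $(\sy_X)_!:\PX\to\PPX$ is fully faithful by Lemma \ref{fully_faithful_presheaf}\ref{fully_faithful_presheaf:ff}; and then $((\sy_X)_!)_{\ie}=(\sy_X)_{!\ie}:\PdPX\to\PdP\PX$ is fully faithful by the same lemma once more. Theorem \ref{Yoneda_flat_dist_law} then immediately produces the unique flat distributive law $\Lamd_X=\ola{((\sy_X)_{!\ie})_*}$ of $\bbP^{\dag}\bbP$ over $\bbP$.

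The remaining task is purely notational: the second equality follows from Lemma \ref{functor_Yoneda}\ref{functor_Yoneda:yd_nat}, which gives $\ola{f_*}=f^!\cdot\sy_Y$ for any $\CQ$-functor $f:X\to Y$; applied to $f=(\sy_X)_{!\ie}:\PdPX\to\PdP\PX$, this yields $\ola{((\sy_X)_{!\ie})_*}=((\sy_X)_{!\ie})^!\cdot\sy_{\PdP\PX}$. For the third equality, I would observe that $(\sy_X)_!\dv(\sy_X)^!$, so by Lemma \ref{adjoint_functor} one has $((\sy_X)_!)_{\ie}=((\sy_X)^!)^{\ie}$, that is, $(\sy_X)_{!\ie}=\sy_X^{!\ie}$; applying $(-)^!$ to both sides gives $((\sy_X)_{!\ie})^!=\sy_X^{!\ie!}$, as required.

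There is no genuine obstacle here, and the bulk of the verification of the lax laws \ref{a}--\ref{e} has already been absorbed into Theorem \ref{Yoneda_flat_dist_law}. The only care needed is in tracking types, since double-presheaf expressions like $\PdP\PX$ and $\PPdPX$ look superficially similar; but the adjunction and Yoneda rules from Section \ref{QCat} identify the relevant functors unambiguously, and the proof reduces to a brief sequence of routine steps parallel to the proof of Proposition \ref{PPd_dist}.
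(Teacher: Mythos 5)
Your proof is correct and follows essentially the paper's own (one-line) argument: establish full fidelity of $(\sy_X)_{!\ie}$ by iterating Lemma \ref{fully_faithful_presheaf}\ref{fully_faithful_presheaf:ff} starting from the Yoneda embedding, invoke Theorem \ref{Yoneda_flat_dist_law} for existence, flatness and uniqueness, and then derive the alternative expressions from Lemma \ref{functor_Yoneda}\ref{functor_Yoneda:yd_nat} and the adjunction $(\sy_X)_!\dv(\sy_X)^!$ (equivalently, Lemma \ref{functor_Yoneda}\ref{functor_Yoneda:f_double}). Your subscript $\sy_{\PdP\PX}$ in the middle expression is the type-correct one, since the codomain of $(\sy_X)_{!\ie}$ is $\PdP\PX$; the occurrence of $\sy_{\PPd\PX}$ in the displayed statement appears to be a typographical slip.
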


It turns out that the lax $\Lambda^{\dagger}$-algebras over $\CQ$ coincide with the lax $\lam$-algebras of Theorem \ref{P_Alg}:

\begin{thm} \label{PdP_Alg}
$\LamdQAlg\cong\QCls$.
\end{thm}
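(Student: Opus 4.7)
The plan is to mimic the three-step pattern of Theorems~\ref{Pd_Alg} and~\ref{PPd_Alg} via the bijection
$$p\longmapsto c_p:=p\cdot\syd_{\PX},\qquad c\longmapsto p_c:=c\cdot{\inf}_{\PX}$$
between $\Lamd$-algebra structures $p:\PdPX\to\PX$ on $X$ and $\CQ$-closure operators $c:\PX\to\PX$. The identity $c_{p_c}=c$ is immediate from $\inf_{\PX}\cdot\syd_{\PX}=1_{\PX}$ in Lemma~\ref{Yoneda}(2).

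The first step is to show that every $\Lamd$-algebra $(X,p)$ satisfies the canonical form $p=c_p\cdot\inf_{\PX}$. The inequality $c_p\cdot\inf_{\PX}\leq p$ is just the counit $\syd_{\PX}\cdot\inf_{\PX}\leq 1_{\PdPX}$ of $\syd_{\PX}\dv\inf_{\PX}$. For the reverse direction --- the principal obstacle --- I would start from $p=\sup_{\PX}\cdot\sy_{\PX}\cdot p=\sy_X^!\cdot p_!\cdot\sy_{\PdPX}$ (Lemma~\ref{Yoneda}(2), \eqref{s_def}, naturality of $\sy$), and feed this through axiom~\ref{g} using the explicit forms $\Lamd_X=\sy_X^{!\ie !}\cdot\sy_{\PdP\PX}$ and $\Fsd_X=\ssd_{\PX}\cdot\sy_{\PdPX}^{\ie}$ together with axiom~\ref{f}, naturality, adjunction rules, and the (op)lax idempotence of $\bbP$ and $\bbP^{\dag}$, in the style of Step~1 of Theorems~\ref{Pd_Alg} and~\ref{PPd_Alg}, to collapse the expression to $c_p\cdot\inf_{\PX}$.

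Under this canonical form, the axioms translate cleanly. For axiom~\ref{f}: substituting $p=c\cdot\inf_{\PX}$ and using $\inf_{\PX}\cdot\syd_{\PX}=1_{\PX}$ reduces \ref{f} to $\sy_X\leq c\cdot\sy_X$, equivalent to the extensivity $1_{\PX}\leq c$ by Lemma~\ref{fyx_leq_gyx}. For \ref{g}: after substitution and unpacking $\Lamd_X$ and $\Fsd_X$ via \eqref{s_def}, \eqref{sd_def}, \eqref{Fsd_def} and the naturality of $\sy,\syd$, the inequality collapses to the idempotence $c\cdot c\leq c$.

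For morphisms, \ref{h} for $f:(X,p_c)\to(Y,p_d)$ reads $f_!\cdot c\cdot\inf_{\PX}\leq d\cdot\inf_{\PY}\cdot f_{!\ie}$. Precomposing with $\syd_{\PX}$ and applying $\inf_{\PY}\cdot\syd_{\PY}=1_{\PY}$ together with the naturality $f_{!\ie}\cdot\syd_{\PX}=\syd_{\PY}\cdot f_!$ (Lemma~\ref{functor_Yoneda}\ref{functor_Yoneda:yd_nat}) yields the continuity condition $f_!\cdot c\leq d\cdot f_!$; conversely, Lemma~\ref{la_preserve_sup} applied to $f_!$ supplies $f_!\cdot\inf_{\PX}\leq\inf_{\PY}\cdot f_{!\ie}$ for free, delivering \ref{h} from continuity. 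The main obstacle is thus Step~1; Steps~2 and~3 amount to bookkeeping with the tools of Section~\ref{QCat}.
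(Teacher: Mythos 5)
Your dictionary in the direction $p\mapsto c_p:=p\cdot\syd_{\PX}$ is the right one, but the proposed inverse $c\mapsto p_c:=c\cdot{\inf}_{\PX}$, and with it the ``canonical form'' $p=c_p\cdot{\inf}_{\PX}$ that your Step 1 sets out to prove, is false, so no amount of feeding the algebra axioms through \ref{g} will produce it. What the axioms actually force is (the paper's \eqref{LamdQ_p})
$$p={\inf}_{\PX}\cdot p_{\ie}\cdot(\syd_{\PX})_{\ie}={\inf}_{\PX}\cdot(c_p)_{\ie},$$
so in particular every lax $\Lamd$-algebra structure $p$ is a \emph{right adjoint} ($\inf$-preserving) $\CQ$-functor $\PdPX\to\PX$. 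By Lemma \ref{la_preserve_sup} one only has $c\cdot{\inf}_{\PX}\leq{\inf}_{\PX}\cdot c_{\ie}$, with equality precisely when $c$ is a right adjoint --- and a $\CQ$-closure operation on $\PX$ is in general not $\inf$-preserving. Concretely, take $\CQ={\bf 2}$, $X=\bbR$ as a set (so $\PX$ is the powerset) and $c$ the topological closure. Then $p_c=c\cdot{\inf}_{\PX}$ sends an up-closed family $U$ of subsets to $c\bigl(\bigcap U\bigr)$, and unravelling \ref{g} for $p_c$ at the element of $\PdPdPX$ generated by the principal copresheaves of $A$ and $B$ forces $c(cA\cap cB)\subseteq c(A\cap B)$, which fails for $A=(0,1)$, $B=(1,2)$; so $p_c$ is not a lax $\Lamd$-algebra structure at all. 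The genuine algebra attached to $c$ is ${\inf}_{\PX}\cdot c_{\ie}$, sending $U$ to $\bigcap_{A\in U}cA$, i.e.\ it is $\dphi$ for the distributor $\phi:X\oto\PX$ with $\olphi=c$, obtained through the isomorphisms $\QDist(X,\PX)\cong\QCat(\PX,\PX)\cong\QInf(\PdPX,\PX)$ of Lemma \ref{PX_PdY_la} --- an Isbell-type right extension along $\syd_{\PX}$, not precomposition with ${\inf}_{\PX}$.

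So the ``principal obstacle'' is not a deferred computation: the statement of your Step 1 is wrong, and Steps 2 and 3 (which substitute $p=c\cdot{\inf}_{\PX}$) inherit the error even though the equivalences you want at the end are the correct ones. The repair follows the paper's route: first reduce \ref{f} and \ref{g} to $1_{\PX}\leq p\cdot\syd_{\PX}$ and $p\cdot p_{\ie}\leq p\cdot(\syd_{\PX})^{\ie}$ using the explicit forms of $\Fyd$, $\Fsd$, $\Lamd$ and Lemma \ref{fyx_leq_gyx}; deduce \eqref{LamdQ_p}, so that $p$ lies in $\QInf(\PdPX,\PX)$ and hence equals $\dphi$ for a unique $\phi:X\oto\PX$; and then transport the two conditions across Lemma \ref{PX_PdY_la} into extensivity and idempotence of $\olphi=p\cdot\syd_{\PX}$ (and likewise \ref{h} into continuity), using $\olphi=\dphi\cdot\syd_{\PX}$, the naturality of $\syd$, and Lemma \ref{fyx_leq_gyx}.
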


\begin{proof}
{\bf Step 1.} We show that, if $(X,p)$ is a $(\Lamd,\CQ)$-algebra, then
\begin{equation} \label{LamdQ_p}
p={\inf}_{\PX}\cdot p_{\ie}\cdot(\syd_{\PX})_{\ie}.
\end{equation}
Indeed, with \eqref{Fyd_def} and \eqref{Fsd_def} one may translate the conditions \ref{f} and \ref{g} respectively as
$$\sy_X\leq p\cdot\syd_{\PX}\cdot\sy_X\quad\text{and}\quad{\sup}_{\PX}\cdot p_!\cdot\Lamd_X\cdot p_{!\ie}\leq p\cdot(\syd_{\PX})^{\ie}\cdot\sy_{\PdPX}^{\ie}.$$
To simplify the above conditions, first note that Lemma \ref{fyx_leq_gyx} implies
$$\sy_X\leq p\cdot\syd_{\PX}\cdot\sy_X\iff 1_{\PX}\leq p\cdot\syd_{\PX}.$$
Second, from Lemma \ref{functor_Yoneda}\ref{functor_Yoneda:f_double} and the naturality of $\sy$ one has
$$\Lamd_X=\sy_X^{!\ie !}\cdot\sy_{\PdP\PX}=(\sy_X^!)_{\ie !}\cdot\sy_{\PdP\PX}=\sy_{\PdPX}\cdot(\sy_X^!)_{\ie}=\sy_{\PdPX}\cdot({\sup}_{\PX})_{\ie},$$
which induces
\begin{align*}
{\sup}_{\PX}\cdot p_!\cdot\Lamd_X\cdot p_{!\ie}&={\sup}_{\PX}\cdot p_!\cdot\sy_{\PdPX}\cdot({\sup}_{\PX})_{\ie}\cdot p_{!\ie}\\
&={\sup}_{\PX}\cdot\sy_{\PX}\cdot p\cdot({\sup}_{\PX})_{\ie}\cdot p_{!\ie}&(\sy\ \text{is natural})\\
&=p\cdot({\sup}_{\PX})_{\ie}\cdot p_{!\ie}
\end{align*}
and, moreover,
\begin{align*}
&p\cdot({\sup}_{\PX})_{\ie}\cdot p_{!\ie}\leq p\cdot(\syd_{\PX})^{\ie}\cdot\sy_{\PdPX}^{\ie}\\
\iff{}&p\cdot({\sup}_{\PX})_{\ie}\cdot p_{!\ie}\cdot(\sy_{\PdPX})_{\ie}\leq p\cdot(\syd_{\PX})^{\ie}&(\sy_{\PdPX}^{\ie}\dv(\sy_{\PdPX})_{\ie})\\
\iff{}&p\cdot({\sup}_{\PX})_{\ie}\cdot(\sy_{\PX})_{\ie}\cdot p_{\ie}\leq p\cdot(\syd_{\PX})^{\ie}&(\sy\ \text{is natural})\\
\iff{}&p\cdot p_{\ie}\leq p\cdot(\syd_{\PX})^{\ie}.
\end{align*}
Therefore, $(X,p)$ is a $(\Lamd,\CQ)$-algebra if, and only if,
\begin{enumerate}[label=(\alph*),start=6]
\item \label{Lamd:f}
    $1_{\PX}\leq p\cdot\syd_{\PX}$ and
\item \label{Lamd:g}
    $p\cdot p_{\ie}\leq p\cdot(\syd_{\PX})^{\ie}$.
\end{enumerate}
It follows that
\begin{align*}
p&={\inf}_{\PX}\cdot\syd_{\PX}\cdot p\\
&={\inf}_{\PX}\cdot p_{\ie}\cdot\syd_{\PdPX}&(\syd\ \text{is natural})\\
&\leq{\inf}_{\PX}\cdot p_{\ie}\cdot(\syd_{\PX})_{\ie}&(\bbP^{\dag}\ \text{is oplax idempotent})\\
&\leq p\cdot\syd_{\PX}\cdot{\inf}_{\PX}\cdot p_{\ie}\cdot(\syd_{\PX})_{\ie}&(p\ \text{satisfies \ref{Lamd:f}})\\
&\leq p\cdot p_{\ie}\cdot(\syd_{\PX})_{\ie}&(\syd_{\PX}\dv{\inf}_{\PX})\\
&\leq p\cdot(\syd_{\PX})^{\ie}\cdot(\syd_{\PX})_{\ie}&(p\ \text{satisfies \ref{Lamd:g}})\\
&=p.&(\syd_{\PX}\ \text{is fully faithful})
\end{align*}

{\bf Step 2.} As an immediate consequence of \eqref{LamdQ_p}, $p$ is a right adjoint in $\QCat$. For every $\CQ$-category $X$, as one already has
$$\QDist(X,\PX)\cong\QCat(\PX,\PX)\cong\QInf(\PdPX,\PX)$$
from Lemma \ref{PX_PdY_la}, with the isomorphisms given by
$$(\phi:X\oto\PX)\mapsto(\olphi:\PX\to\PX)\mapsto(\dphi:\PdPX\to\PX),$$
in order for us to establish a bijection between $\CQ$-closure operations on $\PX$ and $(\Lamd,\CQ)$-algebra structures on $X$, it suffices to prove
\begin{enumerate}[label=$\bullet$]
\item $1_{\PX}\leq\olphi\iff\dphi$ satisfies \ref{Lamd:f}, and
\item $\olphi\cdot\olphi\leq\olphi\iff\dphi$ satisfies \ref{Lamd:g}
\end{enumerate}
for all $\CQ$-distributors $\phi:X\oto\PX$.

First, the equivalence $(1_{\PX}\leq\olphi\iff\dphi$ satisfies \ref{Lamd:f}) holds trivially since $\olphi=\dphi\cdot\syd_{\PX}$.

Second, one has ($\olphi\cdot\olphi\leq\olphi\iff\dphi$ satisfies \ref{Lamd:g}). Indeed,
\begin{align*}
\olphi\cdot\olphi\leq\olphi&\iff\dphi\cdot\syd_{\PX}\cdot\dphi\cdot\syd_{\PX}\leq\dphi\cdot\syd_{\PX}&(\olphi=\dphi\cdot\syd_{\PX})\\
&\iff\dphi\cdot\syd_{\PX}\cdot\dphi\leq\dphi&(\text{Lemma \ref{fyx_leq_gyx}})\\
&\iff\dphi\cdot(\dphi)_{\ie}\cdot\syd_{\PdPX}\leq\dphi=\dphi\cdot{\inf}_{\PdPX}\cdot\syd_{\PdPX}&(\syd\ \text{is natural})\\
&\iff\dphi\cdot(\dphi)_{\ie}\leq\dphi\cdot{\inf}_{\PdPX}=\dphi\cdot(\syd_{\PX})^{\ie}&(\text{Lemma \ref{fyx_leq_gyx}})\\
&\iff\dphi\ \text{satisfies \ref{Lamd:g}}.
\end{align*}

{\bf Step 3.} For any $\psi:Y\oto\PY$, $f:(X,\olphi)\to(Y,\olpsi)$ is a continuous $\CQ$-functor if, and only if, $f:(X,\dphi)\to(Y,\dpsi)$ satisfies \ref{h}. Indeed,
\begin{align*}
f_!\cdot\olphi\leq\olpsi\cdot f_!&\iff f_!\cdot\dphi\cdot\syd_{\PX}\leq\dpsi\cdot\syd_{\PY}\cdot f_!\\
&\iff f_!\cdot\dphi\cdot\syd_{\PX}\leq\dpsi\cdot f_{!\ie}\cdot\syd_{\PX}&(\syd\ \text{is natural})\\
&\iff f_!\cdot\dphi\leq\dpsi\cdot f_{!\ie},&(\text{Lemma \ref{fyx_leq_gyx}})
\end{align*}
which completes the proof.
\end{proof}

\section{Distributive laws of $\bbT$ over $\bbP$ versus lax extensions of $\bbT$ to $\QDist$} \label{Dist_Law_vs_Extension}

In this section, for an arbitrary 2-monad $\bbT$ on $\QCat$, we outline the bijective correspondence between distributive laws\footnote{We remind the reader that, as stated in Section \ref{Lax_Dist_Laws}, in this paper we use ``distributive law'' to mean ``\emph{lax} distributive law'', which is especially relevant when reading Proposition \ref{lax_ext_monad} and Corollary \ref{flat_lax_ext_monad} below.} of $\bbT$ over $\bbP$ and so-called lax extensions of $\bbT$ to $\QDist$. The techniques adopted here generalize their discrete counterparts as given in \cite{Tholen2016}.

Given a 2-functor $T:\QCat\to\QCat$, a \emph{lax extension of} $T$ to $\QDist$ is a lax functor
$$\hT:\QDist\to\QDist$$
that coincides with $T$ on objects and satisfies the extension condition (3) below. Explicitly, $\hT$ is given by a family
\begin{equation} \label{hTphi}
(\hT\phi:TX\oto TY)_{\phi\in\QDist(X,Y),\ X,Y\in\ob(\QCat)}
\end{equation}
of $\CQ$-distributors such that
\begin{enumerate}[label=(\arabic*)]
\item \label{one}
    $\phi\preceq\phi'{}\Lra{}\hT\phi\preceq\hT\phi'$,
\item \label{two}
    $\hT\psi\circ\hT\phi\preceq\hT(\psi\circ\phi)$,
\item \label{three}
    $(Tf)_*\preceq\hT(f_*)$,\quad $(Tf)^*\preceq\hT(f^*)$,
\end{enumerate}
for all $\CQ$-distributors $\phi,\phi':X\oto Y$, $\psi:Y\oto Z$ and $\CQ$-functors $f:X\to Y$.

It is useful to present the following equivalent conditions of \ref{three}, which can be proved analogously to their discrete versions in \cite{Tholen2016}, by straightforward calculation:

\begin{lem} \label{hT_graph}
Given a family \eqref{hTphi} of $\CQ$-distributors satisfying {\rm\ref{one}} and {\rm\ref{two}}, the following conditions are equivalent when quantified over the variables occurring in them ($f:X\to Y$, $\phi:Z\oto Y$, $\psi:Y\oto Z$):
\begin{enumerate}[label={\rm (\roman*)}]
\item \label{hT_graph:cograph}
    $1_{TX}^*\preceq\hT(1_X^*)$,\quad $\hT(f^*\circ\phi)=(Tf)^*\circ\hT\phi$.
\item \label{hT_graph:graph}
    $1_{TX}^*\preceq\hT(1_X^*)$,\quad $\hT(\psi\circ f_*)=\hT\psi\circ(Tf)_*$.
\item \label{hT_graph:three}
    $(Tf)_*\preceq\hT(f_*)$,\quad $(Tf)^*\preceq\hT(f^*)$\quad (\emph{i.e.}, $\hT$ satisfies {\rm\ref{three}}).
\end{enumerate}
\end{lem}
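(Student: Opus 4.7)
The plan is to establish the equivalence via the cycle (iii) $\Rightarrow$ (i) $\Rightarrow$ (iii) and (iii) $\Rightarrow$ (ii) $\Rightarrow$ (iii), using throughout the graph/cograph adjunctions $f_* \dashv f^*$ in $\QDist$ and their $T$-images $(Tf)_* \dashv (Tf)^*$, together with the standing monotonicity hypothesis \ref{one} and lax functoriality hypothesis \ref{two}. Only two genuinely different calculations occur, since (i) and (ii) are dual to each other under swapping graphs with cographs.

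For (iii) $\Rightarrow$ (i), the inequality $1_{TX}^* \preceq \hT(1_X^*)$ is just (iii) specialised at $f = 1_X$, since $(T 1_X)_* = 1_{TX}^*$. For the equality $\hT(f^* \circ \phi) = (Tf)^* \circ \hT\phi$, the direction $\succeq$ is immediate from (iii) and \ref{two}: $(Tf)^* \circ \hT\phi \preceq \hT(f^*) \circ \hT\phi \preceq \hT(f^* \circ \phi)$. For $\preceq$, I would transpose across $(Tf)_* \dashv (Tf)^*$, reducing the goal to $(Tf)_* \circ \hT(f^* \circ \phi) \preceq \hT\phi$; this follows by applying (iii), then \ref{two}, and finally \ref{one} to the counit inequality $f_* \circ f^* \preceq 1_Y^*$. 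The implication (iii) $\Rightarrow$ (ii) is the mirror argument with the roles of $f_*$ and $f^*$ interchanged.

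For (i) $\Rightarrow$ (iii), I would read off $(Tf)^* \preceq \hT(f^*)$ by specialising the equality in (i) at $\phi = 1_Y^*$, obtaining $\hT(f^*) = (Tf)^* \circ \hT(1_Y^*) \succeq (Tf)^* \circ 1_{TY}^* = (Tf)^*$. For $(Tf)_* \preceq \hT(f_*)$, the plan is to specialise (i) at $\phi = f_*$, giving $\hT(f^* \circ f_*) = (Tf)^* \circ \hT(f_*)$, then combine with \ref{one} applied to the unit $1_X^* \preceq f^* \circ f_*$ and with $1_{TX}^* \preceq \hT(1_X^*)$ to derive $1_{TX}^* \preceq (Tf)^* \circ \hT(f_*)$; composing on the left with $(Tf)_*$ and collapsing via $(Tf)_* \circ (Tf)^* \preceq 1_{TY}^*$ completes the step. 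The implication (ii) $\Rightarrow$ (iii) is dual: specialise (ii) at $\psi = 1_Y^*$ to get $(Tf)_* \preceq \hT(f_*)$ immediately, and at $\psi = f^*$ to get $\hT(f^* \circ f_*) = \hT(f^*) \circ (Tf)_*$, from which $(Tf)^* \preceq \hT(f^*)$ is extracted by the analogous unit–counit manoeuvre on the other side.

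The whole argument is formal once one is fluent with transposing across $f_* \dashv f^*$ and $(Tf)_* \dashv (Tf)^*$, and I do not anticipate any substantive obstacle; the only vigilance required is in tracking which side (unit or counit) of each adjunction to invoke, and in keeping the types of all composites straight. This is precisely why the authors remark that the lemma can be proved by straightforward calculation, analogous to the discrete version in \cite{Tholen2016}.
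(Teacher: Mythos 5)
Your argument is correct and is exactly the ``straightforward calculation'' the paper has in mind (the paper omits the proof, deferring to the discrete version in \cite{Tholen2016}, which proceeds by the same adjunction transpositions across $f_*\dv f^*$ and $(Tf)_*\dv(Tf)^*$ combined with \ref{one} and \ref{two}). All the specialisations ($f=1_X$, $\phi=1_Y^*$, $\phi=f_*$, $\psi=1_Y^*$, $\psi=f^*$) and the unit/counit manipulations check out as stated.
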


%
%

\begin{prop} \label{lax_ext_functor}
Lax extensions of a 2-functor $T:\QCat\to\QCat$ to $\QDist$ correspond bijectively to lax natural transformations $T\sP\to\sP T$ satisfying the lax $\bbP$-unit law and the lax $\bbP$-multiplication law.
\end{prop}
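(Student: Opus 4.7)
The plan is to set up a bijection via the adjunction $(-)^* \dv \sP$ of \eqref{QCat_QDist_adjunction}, whose counit at $X$ is $\ep_X = (\sy_X)_* : X \oto \PX$. Given a lax extension $\hT$, I would put
\[
\lam_X := \ola{\hT \ep_X} : T\PX \to \sP TX,
\]
and conversely, given $\lam$, I would take $\hT\phi : TX \oto TY$ (for $\phi:X\oto Y$) to be the distributor whose transpose equals $\lam_X \cdot T\olphi : TY \to \sP TX$. To verify that these assignments are mutually inverse, one direction uses $\ola{\ep_X} = 1_{\PX}$ from Lemma \ref{dist_Yoneda}\ref{dist_Yoneda:id}, while the other uses the factorization $\phi = \olphi^* \circ \ep_X$ of Lemma \ref{dist_Yoneda}\ref{dist_Yoneda:phi} together with the strict equality $\hT(\olphi^* \circ \ep_X) = (T\olphi)^* \circ \hT\ep_X$ coming from Lemma \ref{hT_graph}\ref{hT_graph:cograph}.

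For the axiom translation in the direction $\lam \Rightarrow \hT$, the monotonicity condition (1) will be automatic from the 2-functoriality of $T$, the $\CQ$-functoriality of $\lam_X$, and Lemma \ref{functor_order}\ref{phi_leq_psi}. The extension condition (3) will reduce to the lax $\bbP$-unit law (b): indeed, $1_{TX}^* \preceq \hT(1_X^*)$ translates via $\ola{1_X^*} = \sy_X$ to $\sy_{TX} \leq \lam_X \cdot T\sy_X$, while the companion strict equality $\hT(f^* \circ \phi) = (Tf)^* \circ \hT\phi$ drops out of the definition by transposing through Lemma \ref{transpose}\ref{transpose:l}, so Lemma \ref{hT_graph} then delivers all of (3). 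Lax functoriality (2), after transposing both sides by Lemma \ref{transpose}\ref{transpose:l} and using the formula $\phi^{\od} = \sfs_X \cdot \olphi_!$, reduces to the inequality
\[
\sfs_{TX} \cdot (\lam_X)_! \cdot (T\olphi)_! \cdot \lam_Y \leq \lam_X \cdot T\sfs_X \cdot T(\olphi_!),
\]
which I plan to obtain by first applying the lax naturality (a) of $\lam$ at the $\CQ$-functor $\olphi:Y \to \PX$ and then the lax $\bbP$-multiplication law (c).

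For the reverse direction $\hT \Rightarrow \lam$, the lax $\bbP$-unit law (b) follows from (3) at $f = 1_X$. For (a) and (c), the key is to establish the two identities
\[
\ep_X \circ f^* = (f_!)^* \circ \ep_Y \qquad \text{and} \qquad \ep_{\PX} \circ \ep_X = (\sfs_X)^* \circ \ep_X,
\]
both verified by comparing transposes using $\ep_X^{\od} = \sfs_X$ (equation \eqref{s_def}), $\ola{\ep_X} = 1_{\PX}$, and the monad unit equation $\sfs_X \cdot \sy_{\PX} = 1_{\PX}$. Combining these identities with (2) and (3) (the latter through the strict equality in Lemma \ref{hT_graph}\ref{hT_graph:cograph}) will yield $\hT\ep_X \circ (Tf)^* \preceq (T(f_!))^* \circ \hT\ep_Y$ and $\hT\ep_{\PX} \circ \hT\ep_X \preceq (T\sfs_X)^* \circ \hT\ep_X$ respectively, whose transposes are precisely (a) and (c). The main obstacle will be the careful bookkeeping of transposes: every inequality between distributors must be translated into the equivalent inequality between $\CQ$-functors via the calculus of Section \ref{QCat}, with attention paid to keeping each intermediate step strict exactly where Lemma \ref{hT_graph} needs it.
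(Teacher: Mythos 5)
Your proposal is correct and follows essentially the same route as the paper: the same bijection $\lam_X=\ola{\hT\ep_X}$ versus $\ola{\hT\phi}=\lam_X\cdot T\olphi$, the same reduction of condition (3) to the lax $\bbP$-unit law via Lemma \ref{hT_graph}\ref{hT_graph:cograph}, the same derivation of (2) from (a) applied at $\olphi$ followed by (c), and the same two key identities $(\sy_X)_*\circ f^*=(f_!)^*\circ(\sy_Y)_*$ and $(\sy_{\PX})_*\circ(\sy_X)_*=(\sfs_X)^*\circ(\sy_X)_*$ for the converse direction. No substantive differences to report.
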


\begin{proof}
{\bf Step 1.} For each $\lam:T\sP\to\sP T$ satisfying \ref{a}, \ref{b} and \ref{c}, $\Phi(\lam):=\hT=(\hT\phi)_{\phi}$ with $\ola{\hT\phi}:=\lam_X\cdot T\olphi$ is a lax extension of $T$ to $\QDist$.
$$\begin{array}{cccc}
\Phi(\lam)=\hT: & \QDist(X,Y) & \to & \QDist(TX,TY)\\
 & (\olphi:Y\to\PX) & \mapsto & \bfig\Vtriangle/->`->`<-/<300,300>[TY`\sP TX`T\PX;\ola{\hT\phi}`T\olphi`\lam_X]\efig
\end{array}$$
Indeed, \ref{one} follows immediately from the 2-functoriality of $T$. For \ref{two}, just note that
\begin{align*}
\ola{\hT\psi\circ\hT\phi}&=\sy_{TX}^!\cdot(\ola{\hT\phi})_!\cdot\ola{\hT\psi}&(\text{Lemma \ref{transpose}\ref{transpose:l}})\\
&=\sy_{TX}^!\cdot(\lam_X)_!\cdot(T\olphi)_!\cdot\lam_Y\cdot T\olpsi\\
&\leq\sy_{TX}^!\cdot(\lam_X)_!\cdot\lam_{\PX}\cdot T(\olphi_!)\cdot T\olpsi&(\lam\ \text{satisfies \ref{a}})\\
&\leq\lam_X\cdot T\sy_X^!\cdot T(\olphi_!)\cdot T\olpsi&(\lam\ \text{satisfies \ref{c}})\\
&=\lam_X\cdot T(\ola{\psi\circ\phi})&(\text{Lemma \ref{transpose}\ref{transpose:l}})\\
&=\ola{\hT(\psi\circ\phi)}.
\end{align*}
For \ref{three}, it suffices to check Lemma \ref{hT_graph}\ref{hT_graph:cograph}. Since $\lam$ satisfies \ref{b}, it follows easily that
$$\ola{1_{TX}^*}=\sy_{TX}\leq\lam_X\cdot T\sy_X=\lam_X\cdot T\ola{1_X^*}=\ola{\hT(1_X^*)}.$$
For the second identity, Lemma \ref{transpose}\ref{transpose:l} implies
$$\ola{\hT(f^*\circ\phi)}=\lam_X\cdot T(\ola{f^*\circ\phi})=\lam_X\cdot T\olphi\cdot Tf=\ola{\hT\phi}\cdot Tf=\ola{(Tf)^*\circ\hT\phi}.$$

{\bf Step 2.} For every lax extension $\hT$ of $T$, $\Psi(\hT):=\lam=(\lam_X)_X$ with
$$\lam_X:=\ola{\hT\ep_X}=\ola{\hT(\sy_X)_*}:T\PX\to\sP TX$$
is a lax natural transformation satisfying the $\bbP$-unit law and the $\bbP$-multiplication law.

\ref{a} $(Tf)_!\cdot\lam_X\leq\lam_Y\cdot T(f_!)$ for all $\CQ$-functors $f:X\to Y$. Indeed,
\begin{align*}
(Tf)_!\cdot\lam_X&=\ola{\hT(\sy_X)_*\circ(Tf)^*}&(\text{Lemma \ref{transpose}\ref{transpose:l}})\\
&\leq\ola{\hT(\sy_X)_*\circ(Tf)^*\circ\hT(1_Y^*)}&(\text{Lemma \ref{hT_graph}\ref{hT_graph:cograph}})\\
&=\ola{\hT(\sy_X)_*\circ\hT(f^*)}&(\text{Lemma \ref{hT_graph}\ref{hT_graph:cograph}})\\
&\leq\ola{\hT((\sy_X)_*\circ f^*)}&(\hT\ \text{satisfies \ref{two}})\\
&=\ola{\hT((f_!)^*\circ(\sy_Y)_*)}&(\text{Lemma \ref{functor_Yoneda}\ref{functor_Yoneda:yf}})\\
&=\ola{(Tf_!)^*\circ\hT(\sy_Y)_*}&(\text{Lemma \ref{hT_graph}\ref{hT_graph:cograph}})\\
&=\lam_Y\cdot T(f_!).&(\text{Lemma \ref{transpose}\ref{transpose:l}})
\end{align*}

\ref{b} $\sy_{TX}\leq\lam_X\cdot T\sy_X$. Indeed,
\begin{align*}
\sy_{TX}=\ola{1_{TX}^*}&\leq\ola{\hT(1_X^*)}&(\text{Lemma \ref{hT_graph}\ref{hT_graph:cograph}})\\
&=\ola{\hT(\sy_X^*\circ(\sy_X)_*)}&(\sy_X\ \text{is fully faithful})\\
&=\ola{(T\sy_X)^*\circ\hT(\sy_X)_*}&(\text{Lemma \ref{hT_graph}\ref{hT_graph:cograph}})\\
&=\lam_X\cdot T\sy_X.&(\text{Lemma \ref{transpose}\ref{transpose:l}})
\end{align*}

\ref{c} $\sfs_{TX}\cdot(\lam_X)_!\cdot\lam_{\PX}\leq\lam_X\cdot T\sfs_X$. Indeed,
\begin{align*}
\sfs_{TX}\cdot(\lam_X)_!\cdot\lam_{\PX}&=\ola{\hT(\sy_{\PX})_*\circ\hT(\sy_X)_*}&(\text{Lemma \ref{transpose}\ref{transpose:l}})\\
&\leq\ola{\hT((\sy_{\PX})_*\circ(\sy_X)_*)}&(\hT\ \text{satisfies \ref{two}})\\
&=\ola{\hT(((\sy_X)_!)_*\circ(\sy_X)_*)}&(\sy\ \text{is natural})\\
&=\ola{\hT((\sy_X^!)^*\circ(\sy_X)_*)}&((\sy_X)_!\dv\sy_X^!)\\
&=\ola{(T\sy_X^!)^*\circ\hT(\sy_X)_*}&(\text{Lemma \ref{hT_graph}\ref{hT_graph:cograph}})\\
&=\lam_X\cdot T\sfs_X.&(\text{Lemma \ref{transpose}\ref{transpose:l}})
\end{align*}

{\bf Step 3.} $\Phi$ and $\Psi$ are inverse to each other. For each $\lam:T\sP\to\sP T$, $\Psi\Phi(\lam)=\lam$ since
$$(\Psi\Phi(\lam))_X=\ola{\Phi(\lam)(\sy_X)_*}=\lam_X\cdot T\ola{(\sy_X)_*}=\lam_X\cdot T1_{\PX}=\lam_X.$$
Conversely, for every lax extension $\hT$, one has
$$\ola{(\Phi\Psi(\hT))\phi}=\ola{\hT(\sy_X)_*}\cdot T\olphi=\ola{(T\olphi)^*\circ\hT(\sy_X)_*}=\ola{\hT(\olphi^*\circ(\sy_X)_*)}=\ola{\hT\phi},$$
where the last three equalities follow respectively from Lemmas \ref{transpose}\ref{transpose:l}, \ref{hT_graph}\ref{hT_graph:cograph} and \ref{dist_Yoneda}\ref{dist_Yoneda:phi}.
\end{proof}

For a 2-monad $\bbT=(T,m,e)$ on $\QCat$, a lax extension $\hT$ of $T$ to $\QDist$ becomes a \emph{lax extension of the 2-monad} $\bbT$ if it further satisfies
\begin{enumerate}[label=(\arabic*),start=4]
\item \label{four} $\phi\circ e_X^*\preceq e_Y^*\circ\hT\phi$,
\item \label{five} $\hT\hT\phi\circ m_X^*\preceq m_Y^*\circ\hT\phi$
\end{enumerate}
for all $\CQ$-distributors $\phi:X\oto Y$. By adjunction, \ref{four} and \ref{five} may be equivalently expressed as
\begin{enumerate}[label=(\arabic*'),start=4]
\item \label{fourp} $(e_Y)_*\circ\phi\preceq\hT\phi\circ(e_X)_*$,
\item \label{fivep} $(m_Y)_*\circ\hT\hT\phi\preceq\hT\phi\circ(m_X)_*$.
\end{enumerate}

\begin{prop} \label{lax_ext_monad}
Lax extensions of a 2-monad $\bbT=(T,m,e)$ on $\QCat$ to $\QDist$ correspond bijectively to distributive laws of $\bbT$ over $\bbP$.
\end{prop}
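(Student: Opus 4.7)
My plan is to leverage Proposition \ref{lax_ext_functor}, which already establishes the bijection $\Phi \dashv \Psi$ between lax extensions of the 2-functor $T$ to $\QDist$ and lax natural transformations $\lam : T\sP \to \sP T$ satisfying \ref{a}, \ref{b}, \ref{c}. All that then remains is to verify that, under this bijection, the additional lax monad conditions \ref{four} and \ref{five} correspond respectively to the lax $\bbT$-unit law \ref{d} and the lax $\bbT$-multiplication law \ref{e}. In particular, I would take $\lam = \Psi(\hT)$ and show \ref{four} $\Leftrightarrow$ \ref{d} and \ref{five} $\Leftrightarrow$ \ref{e}; by construction of $\Phi$ and $\Psi$, this automatically handles the other direction of the bijection as well.

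For the first equivalence, I would translate both sides of \ref{four} through the $\ola{\cdot}$-transpose, using Lemma \ref{transpose}\ref{transpose:l}, Lemma \ref{dist_Yoneda}\ref{dist_Yoneda:id}, the identity $\ola{\hT\phi} = \lam_X \cdot T\olphi$ from the definition of $\Phi(\lam)$, and naturality of $e$. Concretely,
\begin{align*}
\ola{\phi \circ e_X^*} &= (e_X)_! \cdot \olphi,\\
\ola{e_Y^* \circ \hT\phi} &= \ola{\hT\phi} \cdot e_Y = \lam_X \cdot T\olphi \cdot e_Y = \lam_X \cdot e_{\PX} \cdot \olphi,
\end{align*}
so that \ref{four} is equivalent to $(e_X)_! \cdot \olphi \leq \lam_X \cdot e_{\PX} \cdot \olphi$ for all $\phi : X \oto Y$. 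Clearly \ref{d} implies this by post-composing with $\olphi$; conversely, specialising to $\phi = (\sy_X)_*$ forces $\olphi = 1_{\PX}$ by Lemma \ref{dist_Yoneda}\ref{dist_Yoneda:id} and recovers \ref{d}.

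The argument for \ref{five} $\Leftrightarrow$ \ref{e} is structurally identical, except that iterating $\Phi(\lam)$ yields $\ola{\hT\hT\phi} = \lam_{TX} \cdot T\lam_X \cdot TT\olphi$. Combining this with Lemma \ref{transpose}\ref{transpose:l} and naturality of $m$ gives
\begin{align*}
\ola{\hT\hT\phi \circ m_X^*} &= (m_X)_! \cdot \lam_{TX} \cdot T\lam_X \cdot TT\olphi,\\
\ola{m_Y^* \circ \hT\phi} &= \lam_X \cdot T\olphi \cdot m_Y = \lam_X \cdot m_{\PX} \cdot TT\olphi.
\end{align*}
Thus \ref{five} reduces to $(m_X)_! \cdot \lam_{TX} \cdot T\lam_X \cdot TT\olphi \leq \lam_X \cdot m_{\PX} \cdot TT\olphi$ for all $\phi$, which is equivalent to \ref{e} by post-composition with $TT\olphi$ in one direction and by setting $\phi = (\sy_X)_*$ (so $TT\olphi = 1_{TT\PX}$) in the other.

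I do not anticipate any serious obstacle here: the whole argument is a routine unwinding of transposes combined with the (co)naturality of $e$ and $m$. The one point that requires care is preserving the ``trailing factor'' $\olphi$ (respectively $TT\olphi$) on the right-hand side of each translated inequality, as this is what makes the reduction to \ref{d} and \ref{e} bidirectional via the choice $\phi = (\sy_X)_*$; skipping this specialisation would only yield one direction of the bijection.
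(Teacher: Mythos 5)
Your proposal is correct and follows essentially the same route as the paper: reduce to Proposition \ref{lax_ext_functor} and verify, via the $\ola{\,\cdot\,}$-transposes together with Lemma \ref{transpose}\ref{transpose:l} and the naturality of $e$ and $m$, that \ref{four} $\Leftrightarrow$ \ref{d} and \ref{five} $\Leftrightarrow$ \ref{e}. The only difference is that you make explicit the specialisation $\phi=(\sy_X)_*$ (so that $\olphi=1_{\PX}$) needed for the converse directions, which the paper leaves implicit in its ``for all $\phi$'' equivalences.
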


\begin{proof}
With Proposition \ref{lax_ext_functor} at hand, it suffices to prove
\begin{enumerate}[label=$\bullet$]
\item $\hT$ satisfies \ref{four} $\iff$ $\lam$ satisfies \ref{d}, and
\item $\hT$ satisfies \ref{five} $\iff$ $\lam$ satisfies \ref{e}
\end{enumerate}
for every lax extension $\hT$ of the 2-functor $T$ and $\lam=\Psi(\hT)$ with $\lam_X=\ola{\hT(\sy_X)_*}:T\PX\to\sP TX$.

First, ($\hT$ satisfies \ref{four} $\iff$ $\lam$ satisfies \ref{d}). Since Lemma \ref{transpose}\ref{transpose:l} and the naturality of $e$ imply
$$(e_X)_!\cdot\olphi=\ola{\phi\circ e_X^*}\quad\text{and}\quad\lam_X\cdot e_{\PX}\cdot\olphi=\lam_X\cdot T\olphi\cdot e_Y=\ola{\hT\phi}\cdot e_Y=\ola{e_Y^*\circ\hT\phi}$$
for all $\phi:X\oto Y$, it follows that
\begin{align*}
(e_X)_!\leq\lam_X\cdot e_{\PX}&\iff\forall\phi:X\oto Y:\ (e_X)_!\cdot\olphi\leq\lam_X\cdot e_{\PX}\cdot\olphi\\
&\iff\forall\phi:X\oto Y:\ \phi\circ e_X^*\preceq e_Y^*\circ\hT\phi.
\end{align*}

Second, ($\hT$ satisfies \ref{five} $\iff$ $\lam$ satisfies \ref{e}). Similarly as above, one has
$$(m_X)_!\cdot\lam_{TX}\cdot T\lam_X\cdot TT\olphi=(m_X)_!\cdot\lam_{TX}\cdot T\ola{\hT\phi}=(m_X)_!\cdot\ola{\hT\hT\phi}=\ola{\hT\hT\phi\circ m_X^*}$$
and
$$\lam_X\cdot m_{\PX}\cdot TT\olphi=\lam_X\cdot T\olphi\cdot m_Y=\ola{\hT\phi}\cdot m_Y=\ola{m_Y^*\circ\hT\phi}$$
by Lemma \ref{transpose}\ref{transpose:l} and the naturality of $m$. Consequently,
\begin{align*}
&(m_X)_!\cdot\lam_{TX}\cdot T\lam_X\leq\lam_X\cdot m_{\PX}\\
\iff{}&\forall\phi:X\oto Y:\ (m_X)_!\cdot\lam_{TX}\cdot T\lam_X\cdot TT\olphi\leq\lam_X\cdot m_{\PX}\cdot TT\olphi\\
\iff{}&\forall\phi:X\oto Y:\ \hT\hT\phi\circ m_X^*\preceq m_Y^*\circ\hT\phi.
\end{align*}
\end{proof}

A lax extension $\hT$ of a 2-functor $T$ on $\QCat$ is \emph{flat} (more commonly known as \emph{normal}) if
$$\hT 1_X^*=1_{TX}^*$$
for all $\CQ$-categories $X$. One says that a lax extension $\hT$ of a 2-monad $\bbT=(T,m,e)$ on $\QCat$ is \emph{flat} if $\hT$, as a lax extension of the 2-functor $T$, is flat.

If $\hT$ is related to a distributive law $\lam$ by the correspondence of Proposition \ref{lax_ext_monad}, then with Lemma \ref{dist_Yoneda}\ref{dist_Yoneda:y} one sees immediately
$$\ola{\hT 1_X^*}=\lam_X\cdot T\ola{1_X^*}=\lam_X\cdot T\sy_X\quad\text{and}\quad\ola{1_{TX}^*}=\sy_{TX}.$$
Thus we have proved:

\begin{cor} \label{flat_lax_ext_monad}
Flat lax extensions of a $\bbT$ to $\QDist$ correspond bijectively to flat distributive laws of $\bbT$ over $\bbP$.
\end{cor}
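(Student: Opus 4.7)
The plan is to leverage the bijective correspondence already established in Proposition \ref{lax_ext_monad} and simply verify that flatness is preserved under that correspondence. Recall that under this bijection, a lax extension $\hT$ and a distributive law $\lam$ are related by $\ola{\hT\phi}=\lam_X\cdot T\olphi$ for every $\CQ$-distributor $\phi:X\oto Y$; equivalently, $\lam_X=\ola{\hT(\sy_X)_*}$. Since the transposition $\phi\mapsto\olphi$ is bijective (from the adjunction $(-)^*\dv\sP$ in \eqref{QCat_QDist_adjunction}), equality of two $\CQ$-distributors $TX\oto TY$ may be tested after passing to their transposes.

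The key step is then simply to specialize the correspondence at $\phi=1_X^*$. By Lemma \ref{dist_Yoneda}\ref{dist_Yoneda:y} one has $\ola{1_X^*}=\sy_X$ and $\ola{1_{TX}^*}=\sy_{TX}$, so
\[
\ola{\hT 1_X^*}=\lam_X\cdot T\ola{1_X^*}=\lam_X\cdot T\sy_X\quad\text{and}\quad\ola{1_{TX}^*}=\sy_{TX}.
\]
Thus $\hT 1_X^*=1_{TX}^*$ holds for all $X$ if and only if $\lam_X\cdot T\sy_X=\sy_{TX}$ holds for all $X$, which is precisely the condition that $\lam$ satisfy the $\bbP$-unit law \ref{b} strictly, i.e., that $\lam$ be flat. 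Combining this equivalence with the bijection of Proposition \ref{lax_ext_monad} yields the corollary.

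There is no real obstacle here; the entire content is a one-line bookkeeping verification using the transpose formulas from Lemma \ref{dist_Yoneda}. The only thing to be mildly careful about is that the equivalence is stated objectwise in $X$, but this is automatic since both flatness conditions are formulated for all $\CQ$-categories $X$ simultaneously, and the correspondence $\hT\leftrightarrow\lam$ is likewise componentwise in $X$.
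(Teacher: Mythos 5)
Your proposal is correct and is essentially identical to the paper's own argument: the paper likewise specializes the correspondence $\ola{\hT\phi}=\lam_X\cdot T\olphi$ at $\phi=1_X^*$ and invokes Lemma \ref{dist_Yoneda}\ref{dist_Yoneda:y} to identify $\hT 1_X^*=1_{TX}^*$ with the strict $\bbP$-unit law $\lam_X\cdot T\sy_X=\sy_{TX}$. Nothing further is needed.
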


When formulated equivalently in terms of lax monad extensions, Theorem \ref{Yoneda_flat_dist_law} gives \cite[Theorem 4.4]{Lai2016a}, since $\lam_X=\ola{(T\sy_X)_*}$ in Theorem \ref{Yoneda_flat_dist_law} corresponds to the lax extension $\hT$ with $\ola{\hT\phi}=\lam_X\cdot T\olphi=\ola{(T\olphi)^*\circ(T\sy_X)_*}$. In summary, we obtain the following theorem.

\begin{thm}\label{Yoneda_flat_dist_extension}
Let $T$ be a 2-functor on $\QCat$.
Then
$$\hT\phi=(T\olphi)^*\circ(T\sy_X)_*:TX\oto TY$$
defines a lax extension of $T$ to $\QDist$, and the following statements are equivalent:
\begin{enumerate}[label={\rm (\roman*)}]
\item $\hT$ is flat;
\item there exists some flat lax extension of $T$ to $\QDist$;
\item $T$ maps fully faithful $\CQ$-functors to fully faithful $\CQ$-functors;
\item $T\sy_X$ is fully faithful for all $\CQ$-categories $X$;
\item $\hT$ is the only flat lax extension of $T$ to $\QDist$.
\end{enumerate}
Moreover, if $T$ belongs to a 2-monad $\bbT$ on $\QCat$ and satisfies the above equivalent conditions, then $\hT$ is a flat lax extension of the 2-monad $\bbT$, and it is the only one.
\end{thm}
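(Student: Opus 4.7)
The strategy is to transfer everything through the bijective correspondence of Proposition \ref{lax_ext_functor} (and Proposition \ref{lax_ext_monad}, Corollary \ref{flat_lax_ext_monad} in the 2-monad case), so that statements about the candidate lax extension $\hT$ become statements about the associated lax natural transformation $\lam_X:T\PX\to\sP TX$. Lemma \ref{transpose}\ref{transpose:l} shows that the $\hT$ in the theorem corresponds to $\lam_X=\ola{(T\sy_X)_*}$, since
$$\ola{\hT\phi}=\lam_X\cdot T\olphi=\ola{(T\olphi)^*\circ(T\sy_X)_*}.$$
To see that $\hT$ is a lax extension of the 2-functor $T$, I would verify the axioms (a), (b), (c) of Section \ref{Lax_Dist_Laws} for this $\lam$; these are precisely the calculations already carried out in the proof of Theorem \ref{Yoneda_flat_dist_law}, and they use only the 2-functoriality of $T$ (no monad structure).

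For the equivalences, the anchor computation is
$$\hT 1_X^*=(T\ola{1_X^*})^*\circ(T\sy_X)_*=(T\sy_X)^*\circ(T\sy_X)_*,$$
using Lemma \ref{dist_Yoneda}\ref{dist_Yoneda:y}. By Lemma \ref{fully_faithful_presheaf}\ref{fully_faithful_presheaf:ff} this equals $1_{TX}^*$ iff $T\sy_X$ is fully faithful, giving (i)$\iff$(iv). Now (iii)$\Rightarrow$(iv) is immediate, since every $\sy_X$ is fully faithful by Lemma \ref{Yoneda}(1). For (iv)$\Rightarrow$(iii), given $f:X\to Y$ fully faithful, Lemma \ref{fully_faithful_presheaf}\ref{fully_faithful_presheaf:ff} gives $f^*\circ f_*=1_X^*$, and the lax extension axioms combined with flatness yield
$$(Tf)^*\circ(Tf)_*\preceq\hT f^*\circ\hT f_*\preceq\hT(f^*\circ f_*)=\hT 1_X^*=1_{TX}^*,$$
which, together with the always-valid $1_{TX}^*\preceq(Tf)^*\circ(Tf)_*$, forces $Tf$ to be fully faithful. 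The implications (i)$\Rightarrow$(ii) and (v)$\Rightarrow$(i) are trivial, (ii)$\Rightarrow$(iv) follows by running the same display with an arbitrary flat lax extension $\hT'$ in place of $\hT$, and (iv)$\Rightarrow$(v) follows from Proposition \ref{flat_Tyx}, whose proof invokes only axioms (a), (b), (c) and the lax idempotence of $\bbP$ and hence remains valid in the 2-functor-only setting via Proposition \ref{lax_ext_functor} and Corollary \ref{flat_lax_ext_monad}.

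For the 2-monad clause, if $T$ belongs to a 2-monad $\bbT$ satisfying the equivalent conditions, then Theorem \ref{Yoneda_flat_dist_law} provides existence and uniqueness of the flat distributive law $\lam_X=\ola{(T\sy_X)_*}$ of $\bbT$ over $\bbP$, which via Proposition \ref{lax_ext_monad} and Corollary \ref{flat_lax_ext_monad} translates into existence and uniqueness of $\hT$ as a flat lax extension of the 2-monad $\bbT$. The main delicacy I anticipate is the uniqueness step (iv)$\Rightarrow$(v): one must confirm that Proposition \ref{flat_Tyx}, stated in the context of a 2-monad, actually applies whenever we have a lax natural transformation satisfying (a), (b), (c) attached to a 2-functor. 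A careful rereading of its proof shows that neither the unit nor the multiplication of $\bbT$ is used, so the argument goes through unchanged.
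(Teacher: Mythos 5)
Your proposal is correct and follows essentially the same route as the paper: transfer everything through Proposition \ref{lax_ext_functor} to the lax transformation $\lam_X=\ola{(T\sy_X)_*}$, observe that the proofs of Proposition \ref{flat_Tyx} and of laws \ref{a}--\ref{c} in Theorem \ref{Yoneda_flat_dist_law} use no monad structure (giving the lax-extension claim and (iv)$\Rightarrow$(v)), and close the cycle by applying a flat lax extension to $f^*\circ f_*=1_X^*$. The only cosmetic difference is that for (ii)$\Rightarrow$(iii) the paper uses the equalities of Lemma \ref{hT_graph} directly, whereas you sandwich $(Tf)^*\circ(Tf)_*$ between $1_{TX}^*$ and $\hT(f^*\circ f_*)$ via axioms (2)--(3) and the adjunction unit; both are valid.
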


\begin{proof}
By checking the proofs of Proposition \ref{flat_Tyx} and Theorem \ref{Yoneda_flat_dist_law}, it is not difficult to extract from them the corresponding conclusions that are valid for all $\lam:T\sP\to\sP T$ satisfying \ref{a}, \ref{b} and \ref{c} which, by Proposition \ref{lax_ext_functor}, may be transferred to lax extensions of 2-functors; that is, $\hT$ always defines a lax extension of the 2-functor $T$, and (iv)${}\Lra{}$(v) holds. Now the only non-trivial part of the proof of (i)${}\Lra{}$(ii)${}\Lra{}$(iii)${}\Lra{}$(iv)${}\Lra{}$(v)${}\Lra{}$(i) not yet covered is the implication (ii)${}\Lra{}$(iii). But if a $\CQ$-functor $f:X\to Y$ satisfies $f^*\circ f_*=1_X^*$, the application of any flat lax extension $\tilde{T}$ of $T$ to this equality gives $$(Tf)^*\circ(Tf)_*=(Tf)^*\circ 1_{TX}^*\circ(Tf)_*=(Tf)^*\circ \tilde{T}1_X^*\circ (Tf)_*=\tilde{T}(f^*\circ f_*)=1_{TX}^*,$$
by Lemma \ref{hT_graph}.
\end{proof}

\begin{rem}
\begin{enumerate}[label=(\Roman*)]
\item \label{phi_u_v}
    Let us mention that, more generally than in the above proof, whenever $\phi=v^*\circ u_*:X\oto Y$ with $\CQ$-functors $u:X\to Z, v:Y\to Z$, one has
    $$\tilde{T}\phi=(Tv)^*\circ(Tu)_*,$$
    for every flat lax extension $\tilde{T}$ of $\bbT$ --- which must then coincide with $\hT$ if there is such $\tilde{T}$.
\item As the anonymous referee observed, as a consequence of \ref{phi_u_v}  one has the following presentation of the uniquely determined flat lax extension $\hT$ when $T$ preserves the full fidelity of $\CQ$-functors: simply take for $u$ and $v$ above the fully faithful injections of respectively $X$ and $Y$ into the \emph{collage} (also \emph{cograph}) of $\phi$: its objects are given by the disjoint union of the object sets of $X$ and $Y$, and the hom arrows from objects in $X$ to objects in $Y$ are given by $\phi$, while in the opposite direction they are always bottom element arrows.
\item For the sake of completeness let us also mention the following obvious extension of the language used in the context of lax distributive laws: a \emph{strict} extension of a 2-monad $\bbT=(T,m,e)$ on $\QCat$ is a 2-functor $\hT:\QDist\to\QDist$ that coincides with $T$ on objects and satisfies
    \begin{enumerate}[label={\rm (\arabic*$\ast$)},start=3]
    \item $\hT(f^*\circ\phi)=(Tf)^*\circ\hT\phi$,
    \item $\phi\circ e_X^*=e_Y^*\circ\hT\phi$,
    \item $\hT\hT\phi\circ m_X^*=m_Y^*\circ\hT\phi$,
    \end{enumerate}
    for all $f$ and $\phi$. In other words, a lax extension $\hT$ of $\bbT$ is strict if all the inequalities in \ref{two}, Lemma \ref{hT_graph}\ref{hT_graph:cograph}, \ref{four} and \ref{five} are equalities. From the above proofs one immediately sees that strict extensions of $\bbT$ to $\QDist$ correspond bijectively to strict distributive laws of $\bbT$ over $\bbP$.
\end{enumerate}
\end{rem}

With a given lax extension $\hT$ to $\QDist$ of the 2-monad $\bbT$ of $\QCat$ we can now define:

\begin{defn}
A \emph{$\TQ$-category} $(X,\al)$ consists of a $\CQ$-category $X$ and a $\CQ$-distributor $\al:X\oto TX$ satisfying the lax unit and lax multiplication laws
$$1_X^*\preceq e_X^*\circ\al\quad\text{and}\quad\hT\al\circ\al\preceq m_X^*\circ\al.$$
A \emph{$\TQ$-functor} $f:(X,\al)\to(Y,\be)$ is a $\CQ$-functor $f:X\to Y$ with
$$\al\circ f^*\preceq(Tf)^*\circ\be.$$
\end{defn}

$\TQ$-categories and $\TQ$-functors constitute a 2-category $\TQCat$, which is more precisely recorded as $\TTQCat$. It is not surprising that this category just disguises $\lamQAlg$ (and vice versa), for $\lam$ corresponding to $\hT$. The discrete counterpart of this fact already appeared in \cite{Tholen2016}.

\begin{cor} \label{lamQAlg_iso_TTQCat}
If $\lam$ and $\hT$ are related by the correspondence of Proposition {\rm\ref{lax_ext_monad}}, then
$$\lamQAlg\cong\TTQCat.$$
\end{cor}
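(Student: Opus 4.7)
The plan is to exploit the natural isomorphism $\QDist(X, TX) \cong \QCat(TX, \PX)$ from \eqref{QCat_QDist_adjunction}, sending $\al \mapsto \olal$, to establish the object-level bijection $(X, \al) \leftrightarrow (X, p)$ with $p := \olal$. Since both $\lamQAlg$ and $\TTQCat$ inherit their 2-cells from $\QCat$, the 2-categorical enrichment will come for free once I match the axioms and 1-cells pointwise.

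For the unit axiom, Lemma \ref{dist_Yoneda}\ref{dist_Yoneda:y} gives $\ola{1_X^*} = \sy_X$, while Lemma \ref{transpose}\ref{transpose:l} gives $\ola{e_X^* \circ \al} = \olal \cdot e_X = p \cdot e_X$; via Lemma \ref{functor_order}\ref{phi_leq_psi} this will immediately turn $1_X^* \preceq e_X^* \circ \al$ into the $\lam$-algebra condition \ref{f}, namely $\sy_X \leq p \cdot e_X$.

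The central computation concerns the multiplication axiom. Lemma \ref{transpose}\ref{transpose:l} yields $\ola{m_X^* \circ \al} = p \cdot m_X$ and $\ola{\hT\al \circ \al} = \al^{\od} \cdot \ola{\hT\al}$. By the construction $\ola{\hT\phi} = \lam_X \cdot T\olphi$ of $\hT$ from $\lam$ in the proof of Proposition \ref{lax_ext_functor}, one has $\ola{\hT\al} = \lam_X \cdot Tp$. Combining the Yoneda factorization $\al = p^* \circ (\sy_X)_*$ from Lemma \ref{dist_Yoneda}\ref{dist_Yoneda:phi} with the contravariance of the 2-functor $\sP$ and the identity $(\sy_X)_*^{\od} = \sy_X^! = \sfs_X$ from \eqref{s_def}, I obtain $\al^{\od} = \sy_X^! \cdot p_! = \sfs_X \cdot p_!$. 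This will make $\ola{\hT\al \circ \al} = \sfs_X \cdot p_! \cdot \lam_X \cdot Tp$, matching exactly the left-hand side of \ref{g}.

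The 1-cell correspondence is similar and easier: Lemma \ref{transpose}\ref{transpose:l} gives $\ola{\al \circ f^*} = f_! \cdot p$ and $\ola{(Tf)^* \circ \be} = q \cdot Tf$, so the $\TQ$-functor condition $\al \circ f^* \preceq (Tf)^* \circ \be$ becomes the lax homomorphism law \ref{h}. The principal technical hurdle is the identification $\al^{\od} = \sfs_X \cdot p_!$ in the multiplication step, which rests on viewing $\al$ through its Yoneda factorization and then applying the contravariance of $\sP\colon (\QDist)^{\op} \to \QCat$; once this is in hand, all remaining verifications are direct applications of the translation lemmas \ref{dist_Yoneda} and \ref{transpose}.
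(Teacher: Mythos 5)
Your proposal is correct and follows essentially the same route as the paper: transpose everything along $\QDist(X,TX)\cong\QCat(TX,\PX)$ and translate the three axioms via Lemmas \ref{dist_Yoneda}, \ref{functor_order} and \ref{transpose}, arriving at the same key identity $\ola{\hT\al\circ\al}=\sfs_X\cdot p_!\cdot\lam_X\cdot Tp$. The only (harmless) difference is that you compute this by splitting off $\al^{\od}=\sy_X^!\cdot p_!$ and using the $\Phi$-direction formula $\ola{\hT\al}=\lam_X\cdot T\olal$, whereas the paper expands $\hT\al=(T\olal)^*\circ\hT(\sy_X)_*$ via Lemma \ref{hT_graph} and the $\Psi$-direction formula $\lam_X=\ola{\hT(\sy_X)_*}$; these agree because $\Phi$ and $\Psi$ are mutually inverse.
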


\begin{proof}
For any $\CQ$-category $X$, as one already has
$$\QDist(X,TX)\cong\QCat(TX,\PX),$$
with the isomorphism given by
$$(\al:X\oto TX)\mapsto(\olal:TX\to\PX),$$
in order for us to establish a bijection between $\TQ$-category structures on $X$ and $(\lam,\CQ)$-algebra structures on $X$, it suffices to prove
\begin{enumerate}[label=$\bullet$]
\item $1_X^*\preceq e_X^*\circ\al\iff\sy_X\leq\olal\cdot e_X$, and
\item $\hT\al\circ\al\preceq m_X^*\circ\al\iff\sy_X^!\cdot\olal_!\cdot\lam_X\cdot T\olal\leq\olal\cdot m_X,$
\end{enumerate}
for all $\CQ$-distributors $\al:X\oto TX$. Indeed, the first equivalence is easy since $\ola{1_X^*}=\sy_X$ and $\ola{e_X^*\circ\al}=\olal\cdot e_X$ by Lemma \ref{transpose}\ref{transpose:l}. For the second equivalence, just note that $\ola{m_X^*\circ\al}=\olal\cdot m_X$ and
\begin{align*}
\ola{\hT\al\circ\al}&=\ola{(\hT(\olal^*\circ(\sy_X)_*)\circ\al}&(\text{Lemma \ref{dist_Yoneda}\ref{dist_Yoneda:phi}})\\
&=\ola{(T\olal)^*\circ\hT(\sy_X)_*\circ\al}&(\text{Lemma \ref{hT_graph}\ref{hT_graph:cograph}})\\
&=\ola{\hT(\sy_X)_*\circ\al}\cdot T\olal&(\text{Lemma \ref{transpose}\ref{transpose:l}})\\
&=\sy_X^!\cdot\olal_!\cdot\lam_X\cdot T\olal,&(\text{Lemma \ref{transpose}\ref{transpose:l} and}\ \lam_X=\ola{\hT(\sy_X)_*})
\end{align*}

Finally, a $\CQ$-functor $f:X\to Y$ is a $\TQ$-functor $f:(X,\al)\to(Y,\be)$ if, and only if, $f:(X,\olal)\to(Y,\olbe)$ is a lax $\lam$-homomorphism since
$$\al\circ f^*\preceq(Tf)^*\circ\be\iff f_!\cdot\olal=\ola{\al\circ f^*}\leq\ola{(Tf)^*\circ\be}=\olbe\cdot Tf$$
by Lemma \ref{transpose}\ref{transpose:l}.
\end{proof}

\begin{exmp}
\begin{enumerate}[label={\rm (\arabic*)}]
\item For the identity 2-monad $\bbI$ on $\QCat$, the identity 2-functor on $\QDist$ is a strict extension of $\bbI$, and it is easy to see that $(\bbI,\CQ)\text{-}\Cat\cong\Mon(\QDist)$.
\item The flat distributive law $\lam$ of $\bbP$ over itself described in Proposition \ref{P_dist} corresponds to the flat lax extension $\hP$ of $\bbP$ with
    $$\hP\phi:=\phi^{\od *}:\PX\oto\PY$$
    for $\phi:X\oto Y$. From Theorem \ref{P_Alg} one obtains $(\bbP,\CQ)\text{-}\Cat\cong\QCls$.
\item The strict distributive law $\lamd$ of $\bbP^{\dag}$ over $\bbP$ given in Proposition \ref{Pd_dist} determines the strict extension $\check{\sP}^{\dag}$ of $\bbP^{\dag}$ with
    $$\check{\sP}^{\dag}\phi:=(\phi^{\opl})_*:\PdX\oto\PdY.$$
    Theorem \ref{Pd_Alg} shows that $(\bbP^{\dag},\CQ)\text{-}\Cat\cong\Mon(\QDist)$.
\item Proposition \ref{PPd_dist} gives the flat distributive law $\Lam$ of $\bbP\bbP^{\dag}$ over $\bbP$ that corresponds to the flat lax extension $\widehat{\PPd}$ of $\bbP\bbP^{\dag}$ with
    $$\widehat{\PPd}\phi:=\hP\check{\sP}^{\dag}\phi=((\phi^{\opl})_*)^{\od *}=\phi^{\opl ! *}:\PPdX\oto\PPdY.$$
    From Theorem \ref{PPd_Alg} one has $(\bbP\bbP^{\dag},\CQ)\text{-}\Cat\cong\QInt$.
\item The flat distributive law $\Lamd$ of $\bbP^{\dag}\bbP$ over $\bbP$ (see Proposition \ref{PdP_dist}) is related to the flat lax extension $\widehat{\PdP}$ of $\bbP^{\dag}\bbP$ with
    $$\widehat{\PdP}\phi:=\check{\sP}^{\dag}\hP\phi=(\phi^{\od *\opl})_*=(\phi^{\od\ie})_*:\PdPX\oto\PdPY.$$
    Theorem \ref{PdP_Alg} shows that $(\bbP^{\dag}\bbP,\CQ)\text{-}\Cat\cong\QCls$.
\end{enumerate}
\end{exmp}


\end{document}